\def\bfa{{\bf a}}
\def\Hdim{\operatorname{Hdim}}
\def\HSpec{\operatorname{HSpec}}
\def\ConSpan{\operatorname{Conv}}
\def\hgt{\operatorname{hgt}}
\def\condeg{\operatorname{d}_{\operatorname{conv}}}
\def\vsemifield0{$\nu$-semifield$^\dagger$}
\def\vsemifields0{$\nu$-semifields$^\dagger$}
\def\vdomain0{$\nu$-domain$^\dagger$}
\def\vdomains0{$\nu$-domains$^\dagger$}
\def\scrL{\mathscr L}
\def\onenu{1^\nu}
\def\Frac{{\operatorname{Frac}}}
\def\FF{{\langle F \rangle}}
\theoremstyle{plain}
\newtheorem{thm}{Theorem}[section]
\newtheorem{exampl}[thm]{Example}
\newtheorem{lem}[thm]{Lemma}
\newtheorem{cor}[thm]{Corollary}
\newtheorem{prop}[thm]{Proposition}
\newtheorem{constr}[thm]{Construction}
\theoremstyle{remark}
\newtheorem{rem}[thm]{Remark}
\newtheorem{note}[thm]{Note}
\theoremstyle{definition}
\newtheorem{defn}[thm]{Definition}
\newtheorem{exmp}[thm]{Example}
\numberwithin{equation}{section}
\DeclareMathOperator{\PCon}{\mathcal P}
\renewcommand{\Im}{\operatorname{Im\,}}
\def\la{\lambda}
\def\Ker{\mathcal Kern}
\def\semiring0{semiring$^\dagger$}
\def\Semirings0{Semiring$^\dagger$}
\newcommand{\etype}[1]{\renewcommand{\labelenumi}{(#1{enumi})}}
\def\eroman{\etype{\roman}}
\def\semialg0{semi-algebra$^\dagger$}
\def\semirings0{semirings$^\dagger$}
\def\domain0{domain$^\dagger$}
\def\domains0{domains$^\dagger$}
\def\field0{semifield$^\dagger$}
\def\semifield0{semifield$^\dagger$}
\def\Semifield0{Semifield$^\dagger$}
\def\semifields0{semifields$^\dagger$}
\def\Semifields0{Semifields$^\dagger$}
\def\Skel{\operatorname{1}_{\operatorname{loc}}}
\def\tG{\mathcal{G}}
\def\ptM{M}
\def\tT{\mathcal{T}}
\def\one{\mathbf{1}}
\def\rone{\one_R}
\def\skel{{\operatorname{1-set}}}
\def\a{\alpha}
\def\nucong{\cong_\nu}
\def\Cong{\Omega}
\def\fone{\one_F}
\def\Fun{{\operatorname{Fun}}}
\def\FunF{\Fun (F^{(n)},F)}
\begin{document}
\pagenumbering{arabic}

\title{A tropical Krull-Schmidt theorem}%
\author{Tal Perri, Louis Rowen}
\email {talperri@zahav.net.il} \email{rowen@math.biu.ac.il}
\subjclass[2010]  {Primary: 14T05, 12K10 ; Secondary: 16Y60 }
\maketitle

\begin{abstract} Continuing the study of semifield kernels,
 We   develop  some algebraic structure notions such as composition series and convexity degree, along with some notions holding a geometric interpretation, like reducibility and hyperdimension.
\end{abstract}

\tableofcontents
\clearpage

\section{Overview}

 Continuing the study of semifield kernels in tropical mathematics
initiated in the doctoral dissertation of the first author and
\cite{AlgAspTropMath,Kern}, we turn to the basic question of a
tropcial Krull-Schmidt theory. Throughout, $F$ denotes a
$\nu$-archimedean \vsemifield0  (to be defined shortly),
which from Proposition~\ref{prop_convex_dependence_of_HP_property} onwards is assumed to be divisible,
 and
$F(\Lambda)$ is the \vsemifield0 of fractions of the polynomial
\semiring0 $F[\Lambda]$ in the indeterminates $\Lambda = \{ \la_1,
\dots, \la _n\}$.

The most intuitive way to develop algebraic geometry over a semiring
would be to consider the coordinate semiring of a variety,
cf.~\cite{GG,MacRin, Coordinate}. But unfortunately homomorphisms of
semirings are defined by congruences, not ideals, and the study of
congruences is much more difficult than that of ideals. This led the
authors in \cite{Kern} to look for an alternative algebraic
structure, namely that of semifield kernels.

 As seen via   \cite[Chapter 6]{Kern} and in particular
Theorem~\ref{cor11}, tropical varieties correspond to coordinate
$\nu$-\semifields0 (Definition~\ref{def_coord_semifield_1}), and
thus, in view of Proposition~\ref{prop_coord_semifield_1}, chains of
kernels of $F(\Lambda)$, in particular, HP-kernels give us an
algebraic tropical notion of dimension. Its determination is the
subject of this paper, in which we focus on irreducible HP-kernels,
which comprise the \textbf{hyperspace-spectrum},
cf.~Definition~\ref{hyperspec}.  We get to the main results in
\S\ref{compcon}, using standard    Krull-Schmidt theory, proving
catenarity in Theorem~\ref{caten}, and concluding that $F(\Lambda)$
has dimension $n$ in
Theorem~\ref{prop_kernel_descending_chain_properties} and
Corollary~\ref{correctdim}.

Our tools include convexity degree, along with some notions having
a geometric interpretation, such as reducibility and hyperdimension.

\section{Background}

We recall the main ideas of \cite{Kern}, starting with a general
review.

\subsection{Semirings without zero} $ $

 \begin{defn}\label{semir1} A
\textbf{\semiring0} (semiring without zero)  is a
set~$R:=(R,+,\cdot,1)$ equipped with
 binary operations $+$ and~$\cdot \;$ and distinguished element $\rone$ such that:
\begin{enumerate}\eroman
    \item $(R, + )$ is an Abelian semigroup;
    \item $(R, \cdot \ , \rone )$ is a monoid with identity element
    $\rone$;
    \item Multiplication distributes over addition.
      \item $R$ contains elements $r_0$ and $r_1$ with $r_0+r_1 = \rone.$

\end{enumerate}
\end{defn}


A \textbf{\domain0} is a commutative \semiring0 whose multiplicative
monoid is cancellative.

\begin{defn} A
\textbf{\semifield0} is a \domain0 in which every element is
(multiplicatively) invertible.
\end{defn}

 (In other words, the multiplicative
monoid is an Abelian group.) We need a fundamental correspondence
between ordered monoids and \semirings0,
\cite[\S4]{Semifields_LO_Groups}:

\begin{rem}\label{corresp} Any \semiring0 can be viewed as a (multiplicative) semi-lattice
ordered Abelian monoid, where we define
\begin{equation}\label{add0} a \vee b: = a+b .\end{equation}
Thus, we have a natural partial order given by $a \ge b$ whenever $a
= b+c$ for some $c$. (This partial order is trivial for rings, but
not for idempotent semirings!)

Conversely, any semi-lattice ordered Abelian monoid $\ptM $ becomes
a \semiring0, where multiplication is the given monoid operation and
addition is given by
\begin{equation}\label{add0} a+b : = a \vee b\end{equation}
(viewed in $\ptM$).\end{rem}

\subsection{Supertropical \vsemifields0}$ $

We bring in the ``ghost'' notation.

\begin{defn}\label{vdom}\cite[Definition~3.2.1]{Kern} A \textbf{\vdomain0} is a quadruple
$(R, \tT, \nu, \tG)$ where  $R$ is a \semiring0 and $\tT \subset R$
is a cancellative multiplicative submonoid  and $\tG \triangleleft
R$ is endowed with a partial order, together with an idempotent
homomorphism $\nu : R \to \tG$, with $\nu |_\tT$ onto, satisfying
the conditions:
$$  a+b =  a  \quad \text{whenever} \quad \nu (a) > \nu (b). $$
$$ a+b = \nu (a) \quad \text{whenever} \quad \nu (a) = \nu (b). $$
 $\tT$ is called  the \textbf{tangible
submonoid} of $R$.    $ \tG$ is called the \textbf{ghost ideal}.

\end{defn}
%
%
%
%
%
%

We write $a^\nu$ for $\nu(a),$ for $a\in R.$ We write $a \nucong
  b$ if $a^\nu = b^\nu,$ and say that $a$ and $b$ are
  $\nu$-\textbf{equivalent}. Likewise we write $a \ge_\nu b$ (resp.~ $a >_\nu
  b$) if $a^\nu \ge  b^\nu$ (resp.~ $a^\nu >  b^\nu$).

%

 \begin{defn}\label{vsemi2}
A \textbf{$\nu$-\semifield0} is a
  \vdomain0 for which the tangible
submonoid $\tT$ is an Abelian group.
A \textbf{supertropical
\semifield0} is a $\nu$-\semifield0 $F = (F, \tT, \nu, \tG)$  for
which $F = \tT \cup \tG$ and $\tG$ is totally ordered, a special
case of~\cite{SuperTropicalAlg}.
\end{defn}

%
%
%

\begin{exampl}\label{stropi} Given a monoid $\ptM$ and an ordered group $\tG$
with an isomorphism $\nu: \ptM \to \tG,$ we write $a ^\nu$ for
$\nu(a).$ The \textbf{standard supertropical} monoid $R$ is the
disjoint union $ \tT \cup \tG$ where $\tT$ is taken to be  $\ptM$,
made into a monoid by starting with the given products on $\ptM$ and
$\tG$, and defining $ a b^\nu $ and $ a^\nu b $ to be $ (ab)^\nu$
for $a,b \in \ptM$.

We extend $v$ to the \textbf{ghost map} $\nu: R\to \tG$ by taking
$\nu|_\ptM = v$ and  $\nu_\tG$ to be the identity on $ \tG$. Thus,
$\nu$ is a monoid projection.

We make $R$ into a \semiring0, called the \textbf{standard
supertropical \semifield0}, by defining $$a+b =
\begin{cases} a \text{ for } a >_\nu b;\\b \text{ for } a
<_\nu b;\\a^\nu \text{ for } a\nucong b.
\end{cases}$$

 $R$ is never
additively cancellative, since $$a + a^\nu = a^\nu = a^\nu +
a^\nu.$$
%
%

\end{exampl}
%
%
%

\subsubsection{$\nu$-Localization}$ $

 If $R=(R,\tT, \tG,\nu)$ is a  \vdomain0, then we call $ \tT ^{-1}R $ the
\textbf{$\nu$-\semifield0 of fractions} $\Frac _\nu R$ of $R$.

\begin{lem}\label{vsemi20}
  $\Frac_\nu R$
 is a \vsemifield0 in the obvious way.
\end{lem}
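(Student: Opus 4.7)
The plan is to construct $\tT^{-1}R$ by the standard formal-fraction procedure and then check, in order, the \semiring0 axioms, the group property of the new tangible submonoid, the existence of a well-defined ghost map, and the two defining inequalities of Definition~\ref{vdom}. Since $\tT$ is a cancellative multiplicative submonoid of $R$, we may form equivalence classes $[a,t]$ with $a\in R$, $t\in\tT$, under $(a,t)\sim (a',t')\iff at'=a't$, and define
\[
[a,s]\cdot[b,t] = [ab,st], \qquad [a,s]+[b,t] = [at+bs,\,st].
\]
The canonical embedding $R\hookrightarrow \tT^{-1}R$, $a\mapsto[a,1]$, transports the elements $r_0,r_1$ of Definition~\ref{semir1}(iv) into $\tT^{-1}R$, so $\tT^{-1}R$ is a \semiring0.

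Next I would set the new tangible submonoid to be $\tT':=\{[t,s]:t,s\in\tT\}$ and the new ghost ideal to be $\tG':=\{[g,t]:g\in\tG,\,t\in\tT\}$. Because $\tT$ is a cancellative commutative monoid, $\tT'$ is an Abelian group, which is what the \vsemifield0 requirement demands. That $\tG'$ is closed under multiplication by arbitrary elements of $\tT^{-1}R$ follows immediately from $\tG\triangleleft R$.

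I would then extend $\nu$ by $\nu'([a,t]):=[a^\nu,t^\nu]\in\tG'$. Well-definedness uses only that $\nu$ is a multiplicative homomorphism: if $at'=a't$ then $a^\nu (t')^\nu=(a')^\nu t^\nu$, so the class is independent of representative. Idempotence of $\nu'$ is inherited from idempotence of $\nu$, and $\nu'|_{\tT'}$ lands surjectively on $\tG'$ since $\nu|_\tT$ does on $\tG$.

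The heart of the verification -- and where I would expect any real friction -- is checking the two additive axioms under $\nu$-comparison. Given $[a,s],[b,t]\in\tT^{-1}R$, observe that
\[
\nu'([a,s]) > \nu'([b,t]) \iff a^\nu t^\nu > b^\nu s^\nu \iff (at)^\nu > (bs)^\nu \quad\text{in } \tG,
\]
and similarly for $=$. The corresponding axiom for $R$ then yields $at+bs = at$ (resp.\ $at+bs = (at)^\nu$), so
\[
[a,s]+[b,t] \;=\; [at+bs,\,st] \;=\; [at,\,st] \;=\; [a,s]
\]
in the strict case, and $[a,s]+[b,t]=[(at)^\nu,st]=\nu'([a,s])$ in the $\nu$-equal case. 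The only subtlety is that one must pass the comparison through the common denominator $st\in\tT$, which is harmless because $\tT$ acts by order-preserving multiplication on $\tG$. This completes the verification that $(\tT^{-1}R,\tT',\nu',\tG')$ is a \vsemifield0.
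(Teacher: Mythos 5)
Your proposal follows essentially the same route as the paper — the paper's entire proof is the single line ``define $\nu\bigl(\tfrac{r}{s}\bigr) = \tfrac{r^\nu}{s}$'' and leaves the routine verification to the reader, which you fill in — but your formula for the extended ghost map has a small but genuine defect. You set $\nu'([a,t]) := [a^\nu, t^\nu]$, yet by your own definition of $\tG'$ (and of the localization itself) every fraction in $\tT^{-1}R$ must have its \emph{denominator} in $\tT$; since $t^\nu \in \tG$ and in general $\tT \cap \tG = \emptyset$ (this already fails in the standard supertropical semifield), the pair $[a^\nu, t^\nu]$ is not a well-formed element of $\tT^{-1}R$, let alone of $\tG'$. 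The correct formula, as in the paper, keeps the denominator tangible: $\nu'([a,t]) := [a^\nu, t]$. Morally your element and this one coincide, because $a^\nu \cdot t = a^\nu \cdot t^\nu = (at)^\nu$ in any \vdomain0\ (ghosts absorb tangibles), but formally one must use the representative with $t \in \tT$ for the definition to parse.

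Once this is corrected the remaining verification goes through as you outline: well-definedness follows by applying $\nu$ to $at' = a't$ and then using ghost absorption to replace $(t')^\nu$, $t^\nu$ by $t'$, $t$; idempotence and surjectivity of $\nu'|_{\tT'}$ onto $\tG'$ are inherited; and your check of the two additive axioms — clearing the common denominator $st$, invoking the axiom in $R$ for $at$ vs.\ $bs$, and cancelling — is sound, again using $(at)^\nu s = a^\nu st$ in the $\nu$-equal case.
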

\begin{proof} Define $\nu( \frac rs) = \frac{r^\nu}s.$
\end{proof}

 \subsection{Kernels of \Semirings0}$ $

 The role of ideals is
replaced here by kernels.

\begin{defn} A \textbf{kernel} of a \semiring0 $\mathcal{S}$ is a
subgroup ${K}$ which is \textbf{convex} in the sense that if  $a,b
\in {K}$ and $\a, \beta \in \mathcal{S}$ with $\a + \beta =\fone,$
then $\a a + \beta b \in {K}.$\end{defn}

\begin{prop}\label{basicprop}\cite[Proposition~4.1.3]{Kern} If $\Cong$ is a congruence
on a \semifield0 $\mathcal{S}$, then ${K}_\Cong = \left\{ a  \in
\mathcal{S} : a \equiv 1\right\}$ is a kernel. Conversely, any
kernel $K$ of $\mathcal{S}$ defines a congruence according to
\cite[Definition~3.1]{Hom_Semifields}, i.e., $a \equiv b$ iff $\frac
a b \equiv 1.$ If $\mathcal{S}$ is the \semifield0 of the
lattice-ordered group $G$, then the \semifield0 $\mathcal{S}/\rho_K$
is the \semifield0 of the lattice-ordered group~$G/K.$
\end{prop}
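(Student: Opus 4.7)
The plan is to verify the three assertions separately, the key insight being that the convexity axiom in the definition of a kernel is designed precisely to make the additive compatibility work in the converse direction.

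First I would check that $K_\Cong$ is a kernel. For the subgroup property, if $a, b \in K_\Cong$ then $ab \equiv 1\cdot 1 = 1$ by multiplicative compatibility of $\Cong$, and $a^{-1} \equiv a^{-1}\cdot a = 1$ (multiply the relation $a \equiv 1$ on both sides by $a^{-1}$), so $K_\Cong$ is a multiplicative subgroup. Convexity is then immediate: if $a, b \in K_\Cong$ and $\a + \beta = 1$, then
\[
\a a + \beta b \;\equiv\; \a\cdot 1 + \beta\cdot 1 \;=\; \a + \beta \;=\; 1,
\]
so $\a a + \beta b \in K_\Cong$.

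For the converse, declare $a \equiv b$ iff $a/b \in K$. Reflexivity holds because $K$ contains $1$, and symmetry and transitivity follow from $K$ being closed under inverses and products respectively. Multiplicative compatibility is also direct: $a/b,\, c/d \in K$ gives $(ac)/(bd) = (a/b)(c/d) \in K$. The delicate step is additive compatibility, and this is where convexity does its work. Setting $\a = b/(b+d)$ and $\beta = d/(b+d)$, so that $\a + \beta = 1$, one computes
\[
\a\cdot \frac{a}{b} + \beta \cdot \frac{c}{d} \;=\; \frac{a}{b+d} + \frac{c}{b+d} \;=\; \frac{a+c}{b+d}.
\]
Since $a/b, c/d \in K$ and $\a + \beta = 1$, convexity yields $(a+c)/(b+d) \in K$, i.e., $a+c \equiv b+d$.

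Finally, for the lattice-ordered group identification, I would invoke Remark~\ref{corresp}: multiplication in $\mathcal{S}$ is the group operation in $G$, and addition is the lattice operation $\vee$. Under this translation, a kernel of $\mathcal{S}$ is precisely a convex $\ell$-subgroup of $G$, and the quotient constructions on both sides agree, so $\mathcal{S}/\rho_K$, upgraded back to a \semifield0 via Remark~\ref{corresp}, is the \semifield0 of $G/K$. The one genuinely nontrivial point in the whole argument is the additive compatibility of the equivalence relation defined from $K$; everything else is essentially subgroup-theoretic bookkeeping together with the semi-lattice/semifield correspondence.
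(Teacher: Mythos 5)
Your proof is correct, and since the paper cites this result from \cite{Kern} and \cite{Hom_Semifields} without reproducing the argument, there is no in-text proof to compare against; your write-up recovers the standard argument cleanly. The first direction and the multiplicative parts of the converse are routine bookkeeping, and you correctly identified the crux: defining $\alpha = b/(b+d)$, $\beta = d/(b+d)$ (legitimate because every element of a \semifield0 is invertible, so $b+d$ is a unit), observing $\alpha + \beta = 1$, and then computing $\alpha\cdot\frac{a}{b} + \beta\cdot\frac{c}{d} = \frac{a+c}{b+d}$ so that convexity of $K$ yields $a+c \equiv b+d$. This is exactly the mechanism the kernel axiom is engineered to supply, and your computation is valid by commutativity, cancellativity, and distributivity, all of which hold in a \semifield0.

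For the lattice-ordered group identification, your appeal to Remark~\ref{corresp} is the right move, but the one step worth making explicit is why a kernel $K$ is a sublattice of $G$ (not merely an order-convex subgroup), so that $G/K$ inherits an $\ell$-group structure. In the idempotent setting in force here, a kernel is closed under $+ = \vee$ (this is Remark~\ref{bfacts}(vii)), and since $K$ is a subgroup one gets closure under $\wedge$ from $a \wedge b = (a^{-1} \vee b^{-1})^{-1}$. With that supplied, the rest is as you say: $[a]_{\rho_K} \leftrightarrow aK$ is a bijection intertwining multiplications, and additions agree because the quotient map is a lattice homomorphism, so $[a+b] = [a\vee b] = [a]\vee[b]$.
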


\begin{rem}\label{bfacts}\cite[Remark~4.1.4]{Kern}$ $

\begin{enumerate} \eroman

%

 \item \cite[Corollary~1.1]{Cont_Semifields}, \cite[Property~2.4]{Prop_Semifields} Any
 kernel ${K}$ is convex with respect to the order of Remark~\ref{corresp},
 in the sense that if $a \le b \le c$ with $a,c \in {K}$,
then $b \in {K}.$

 \item \cite[Proposition~2.3]{Cont_Semifields}. If $|a| \in {K}$, a kernel,
then $a\in {K}$.

\item \cite{Cont_Semifields} The product $K_1   K_2 = \{ ab \ : \
a \in K_1, b \in K_2 \}$   of two kernels  is    a kernel, in fact
the smallest kernel containing $K_1 \cup K_2$.

 \item The intersection of   kernels is a kernel. Thus, for any set $S \subset \mathcal{S}$ we
can define the kernel $\langle S \rangle$  \textbf{generated by} $S$
to be the intersection of all kernels containing $S$.

\item \cite[Theorem~3.5]{Cont_Semifields}. Any kernel generated by
a finite set $\{ s_1, \dots, s_m\}$ is in fact generated by the
single element $\sum _{i=1}^m (s_i + s_i^{-1}).$

\item The kernel generated by $a \in \mathcal{S}$ is just the set
of finite sums $\{  \sum_i  b_i  a^i :\ b_i \in \mathcal{S}, \ \sum
b_i = 1\}.$

\item  \cite[Theorem~3.8]{Hom_Semifields}. If ${K}$ is a kernel of
a \semifield0 $\mathcal{S}$ and the \semifield0 $\mathcal{S}/{K}$ is
idempotent, then ${K}$ is a sub-\semifield0 of $\mathcal{S}$. (This
is because for $a,b \in {K}$ the image of $a+b$ is $1{K} + 1{K} =
1{K}.$)

\item Let $K$ be a kernel of a \semifield0 $\mathcal{S}$. For
every $a \in \mathcal{S}$, if $a^{n} \in K$ for some $n \in
\mathbb{N}$ then $a \in K$.
 \item The kernel of a kernel is a
kernel.
\end{enumerate}
\end{rem}

We also need the following generalization of (vi):
\begin{prop}\label{prop_ker_stracture_by group}\cite[Proposition (3.13)]{Hom_Semifields}
Let $\mathbb{S}$ be a semifield and let $N$ be a (normal) subgroup of $( \mathbb{S}, \cdot)$.
Then the smallest kernel containing $N$ is
\begin{equation}
 \left\{ \sum_{i=1}^{n}s_ih_i \ : \ n \in \mathbb{N}, \ h_i \in  N, \ s_i \in \mathbb{S} \ \text{such that} \ \sum_{i=1}^{n}s_i = 1 \right\}.
\end{equation}
\end{prop}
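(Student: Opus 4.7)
Write $K$ for the set displayed in the statement. The plan is to verify three things: (a) $N \subseteq K$, which is immediate upon taking a single term with $s_1 = 1$; (b) $K$ is itself a kernel of $\mathbb{S}$; and (c) every kernel containing $N$ contains $K$, which I will handle by induction on the number of summands.

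For (b), convexity of $K$ is direct from the definition: if $a = \sum_i s_i h_i$ and $b = \sum_j t_j g_j$ lie in $K$ and $\alpha + \beta = 1$, then $\alpha a + \beta b$ is again a sum of the required form, with coefficients $\alpha s_i, \beta t_j$ totaling $\alpha + \beta = 1$. Closure under multiplication is the analogous calculation applied to $ab = \sum_{i,j}(s_it_j)(h_ig_j)$, noting that $h_ig_j \in N$. The one step that is not mechanical — the main obstacle — is closure under inverses. Here I plan to use the identity
$$a^{-1} \;=\; \sum_{i=1}^n \bigl(s_i h_i a^{-1}\bigr)\, h_i^{-1}.$$
Setting $t_i := s_i h_i a^{-1}$, one computes $\sum_i t_i = a^{-1}\sum_i s_i h_i = a^{-1}a = 1$ and, by commutativity, $\sum_i t_i h_i^{-1} = a^{-1}\sum_i s_i = a^{-1}$. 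Since $h_i^{-1}\in N$, this exhibits $a^{-1} \in K$; together with $1\in N\subseteq K$ this makes $K$ a multiplicative subgroup.

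For (c), let $L$ be any kernel with $N \subseteq L$. I plan to show by induction on $n$ that every ``convex combination'' $\sum_{i=1}^n s_i \ell_i$ of elements $\ell_i \in L$ with $\sum_i s_i = 1$ already lies in $L$. The base case $n = 2$ is exactly the convexity axiom applied to $\ell_1,\ell_2 \in L$. For $n \ge 3$, set $s := s_1 + s_2$, which is invertible in the semifield $\mathbb{S}$, and rewrite
$$\sum_{i=1}^n s_i \ell_i \;=\; s\cdot\bigl(s^{-1}s_1\, \ell_1 + s^{-1}s_2\, \ell_2\bigr) \;+\; \sum_{i=3}^n s_i\ell_i.$$
The parenthesized term belongs to $L$ by the base case, so the right-hand side is an $(n-1)$-term convex combination of elements of $L$, hence in $L$ by induction. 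Specializing to $\ell_i = h_i \in N \subseteq L$ yields $K \subseteq L$, completing the proof.
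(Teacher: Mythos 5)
The paper only cites this result (it is \cite[Proposition 3.13]{Hom_Semifields}) and offers no proof of its own, so there is nothing in the source to compare against; your proof has to stand on its own, and it does. Part (b) is the only part with real content, and the inverse identity $a^{-1} = \sum_i (s_i h_i a^{-1})\,h_i^{-1}$ is exactly the right trick: the coefficients $t_i = s_i h_i a^{-1}$ sum to $a^{-1}a = 1$, and by commutativity each $t_i h_i^{-1} = s_i a^{-1}$, so the sum collapses to $a^{-1}$. The convexity and product-closure checks are the routine coefficient bookkeeping, and the induction in (c) — absorbing two terms into one via $s = s_1 + s_2$, which is invertible because $\mathbb{S}$ has no zero — is the standard reduction to the two-term convexity axiom. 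One small point worth flagging: your argument uses commutativity both in the inverse step ($h_i a^{-1} h_i^{-1} = a^{-1}$) and in the product step ($s_i h_i t_j g_j = (s_i t_j)(h_i g_j)$); in the commutative setting of this paper that is fine, but it means the parenthetical ``(normal)'' in the statement is doing no work in your argument — normality alone would not suffice for the way you have written these two steps, so a reader should understand that the hypothesis actually in force here is commutativity of $\mathbb{S}$.
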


Next, we recall \cite[Theorem~4.1.6 ff.]{Kern}, which really is a
special case of the basic lattice correspondence from universal
algebra:

\begin{thm}\label{latticecor}
Let $\phi : \mathcal{S}_1 \rightarrow \mathcal{S}_2$ be a
\semifield0 homomorphism. Then the following hold:
\begin{enumerate}
 \item For any kernel $L$ of \ $\mathcal{S}_1$, $\phi(L)$ is a kernel of \ $\phi(\mathcal{S}_1)$.
  \item For a kernel $K$ of \ $\phi(\mathcal{S}_1)$, $\phi^{-1}(K)$ is a kernel of \ $\mathcal{S}_1$. In particular, for any kernel $L$ of \ $\mathcal{S}_1$, $\phi^{-1}(\phi(L)) = K\L$ is a kernel of \ $\mathcal{S}_1$.
\end{enumerate}
\end{thm}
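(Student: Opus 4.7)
The plan is to verify, for each of the two constructions, the two defining properties of a kernel: that it is a (multiplicative) subgroup, and that it is convex in the sense of the definition. Since $\phi$ is a semifield homomorphism, it restricts to a homomorphism of the underlying multiplicative groups, so the subgroup properties in (1) and (2) are immediate standard facts from group theory (image of a subgroup, preimage of a subgroup). The real content is convexity.

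For part (2), the argument is direct. Given $a,b \in \phi^{-1}(K)$ and $\alpha,\beta \in \mathcal{S}_1$ with $\alpha+\beta = \fone$, apply $\phi$ to get $\phi(\alpha)+\phi(\beta)=\fone$ in $\phi(\mathcal{S}_1)$; convexity of $K$ then gives $\phi(\alpha)\phi(a)+\phi(\beta)\phi(b) = \phi(\alpha a+\beta b) \in K$, so $\alpha a+\beta b \in \phi^{-1}(K)$.

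Part (1) is the subtler half. Given $\phi(a),\phi(b) \in \phi(L)$ (with $a,b\in L$) and $\alpha',\beta' \in \phi(\mathcal{S}_1)$ satisfying $\alpha'+\beta'=\fone$, choose lifts $\alpha,\beta \in \mathcal{S}_1$ with $\phi(\alpha)=\alpha'$, $\phi(\beta)=\beta'$. The obstacle is that $\alpha+\beta$ need not equal $\fone$ in $\mathcal{S}_1$, so one cannot immediately invoke convexity of $L$. However, setting $\gamma := \alpha+\beta$, one has $\phi(\gamma) = \alpha'+\beta' = \fone$, and $\gamma$ is invertible in the semifield $\mathcal{S}_1$, so $\gamma^{-1}\alpha + \gamma^{-1}\beta = \fone$. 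Convexity of $L$ now yields $\gamma^{-1}\alpha\, a + \gamma^{-1}\beta\, b \in L$, and applying $\phi$ (using $\phi(\gamma^{-1}) = \fone$) gives $\alpha'\phi(a)+\beta'\phi(b) \in \phi(L)$, as required.

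The main (indeed only) non-routine step is this lift-and-normalize trick in (1): partitions of unity in the image $\phi(\mathcal{S}_1)$ do not lift to partitions of unity in $\mathcal{S}_1$, but dividing by the lifted sum $\gamma$ (which maps to $\fone$) repairs this, and this works precisely because we are in a semifield, where $\gamma^{-1}$ exists. The final clause about $\phi^{-1}(\phi(L))$ being a kernel is then immediate by combining (1) and (2): apply (1) to $L$ to obtain the kernel $\phi(L) \subset \phi(\mathcal{S}_1)$, and then apply (2) to pull it back.
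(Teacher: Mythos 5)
Your proof is correct. The paper itself does not prove Theorem~\ref{latticecor}; it merely recalls the statement from \cite[Theorem~4.1.6 ff.]{Kern} and notes it is an instance of the general lattice correspondence from universal algebra, so there is no in-paper argument to compare against. Your part (2) is the routine pushforward of the convexity condition. Your lift-and-normalize step in part (1)—setting $\gamma = \alpha+\beta$, observing $\phi(\gamma)=\fone$, and replacing $\alpha,\beta$ by $\gamma^{-1}\alpha,\gamma^{-1}\beta$ so as to obtain a genuine partition of $\fone$ in $\mathcal{S}_1$, which is legitimate precisely because $\gamma$ is invertible in the semifield—is exactly the point where the semifield hypothesis is used, and it is the standard way to prove this; together with the group-theoretic facts about images and preimages of subgroups, it gives a complete and correct argument.
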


In particular, $\phi^{-1}(1)$ is a kernel.

\begin{cor}\label{cor_subdirect_decomposition_for_semifield_of_fractions}
There is an injection $\mathbb{S}/( K_1 \cap K_2 \cap \dots  \cap
K_t) \hookrightarrow \prod_{i=1}^{t} \mathbb{S}/K_i$, for any
kernels $K_i$  of a  \semifield0~$\mathbb{S}$, induced by the map $f
\mapsto (fK_i).$
\end{cor}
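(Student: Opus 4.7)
The plan is to construct the product homomorphism and read off its kernel, then invoke the kernel/congruence correspondence already recorded in the excerpt.

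First I would define the map
\[
\phi : \mathbb{S} \longrightarrow \prod_{i=1}^{t} \mathbb{S}/K_i, \qquad f \longmapsto (fK_1, fK_2, \dots, fK_t).
\]
Since each projection $\pi_i : \mathbb{S} \to \mathbb{S}/K_i$ is a \semiring0 homomorphism (by Proposition~\ref{basicprop} applied to the congruence determined by $K_i$), and the product \semifield0 structure on the right-hand side is defined componentwise, $\phi$ is a \semifield0 homomorphism.

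Next I would identify the kernel of $\phi$ in the semifield sense, i.e.\ $\phi^{-1}(\mathbf 1)$, where $\mathbf 1$ denotes the identity of the product. By definition, $\phi(f) = \mathbf 1$ iff $fK_i = K_i$ for every $i$, which, via the congruence description in Proposition~\ref{basicprop}, amounts to $f \in K_i$ for all $i$. Hence
\[
\phi^{-1}(\mathbf 1) \;=\; \bigcap_{i=1}^{t} K_i.
\]
Theorem~\ref{latticecor} guarantees that this preimage is a kernel of $\mathbb{S}$, which is consistent with the fact that intersections of kernels are kernels (Remark~\ref{bfacts}(iv)).

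Finally I would factor $\phi$ through the quotient: since $\bigcap_i K_i$ is precisely the kernel of $\phi$, the universal property of quotient \semifields0 (again Proposition~\ref{basicprop} and Theorem~\ref{latticecor}) yields a well-defined induced homomorphism
\[
\overline{\phi} : \mathbb{S}/\bigl(K_1 \cap \dots \cap K_t\bigr) \longrightarrow \prod_{i=1}^{t}\mathbb{S}/K_i, \qquad f\bigl(\textstyle\bigcap_i K_i\bigr)\longmapsto (fK_i),
\]
whose kernel is trivial. Since a \semifield0 homomorphism with trivial kernel (in the congruence sense of Proposition~\ref{basicprop}) is injective, $\overline{\phi}$ is the desired injection. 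The only mildly delicate point is the verification that $\phi^{-1}(\mathbf 1) = \bigcap_i K_i$, but this is immediate from the kernel/congruence dictionary once one tracks what the identity of the product \semifield0 is; no genuine obstacle arises.
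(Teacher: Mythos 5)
Your proof is correct and follows the standard route one would take here: form the product homomorphism $\phi(f) = (fK_1,\dots,fK_t)$, identify $\phi^{-1}(\mathbf 1) = \bigcap_i K_i$ via the kernel/congruence dictionary of Proposition~\ref{basicprop}, and factor through the quotient to obtain the induced injection. The paper states this corollary without proof (it is the usual subdirect-decomposition fact following from Theorem~\ref{latticecor}), and your argument is precisely the one it implicitly relies on; the only point worth making fully explicit is that injectivity of $\overline\phi$ from triviality of $\overline\phi^{-1}(\mathbf 1)$ requires that $\mathbb{S}/(\bigcap_i K_i)$ is a \semifield0, so that $\overline\phi(a)=\overline\phi(b)$ iff $\overline\phi(ab^{-1})=\mathbf 1$, which is exactly the point where the bijection between kernels and congruences of Proposition~\ref{basicprop} is used.
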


We also have the isomorphism theorems.

\begin{thm}\label{thm_nother_1_and_3}
Let $\mathcal{S}$ $K,L$ be kernels of $\mathcal{S}$.
\begin{enumerate}
  \item If $\mathcal{U}$ is a sub-\semifield0 of  $\mathcal{S}$, then $\mathcal{U} \cap K$ is a kernel of $\mathcal{U}$, and $K$ a kernel of the sub-\semifield0  $\mathcal{U}  K = \{u \cdot k \ : \ u \in \mathcal{U}, \ k \in K \}$ of \ $\mathcal{S}$,
   and one has the isomorphism $$\mathcal{U}/(\mathcal{U} \cap K) \cong \mathcal{U} K/K.$$
  \item $L \cap K$ is a kernel of $L$ and $K$ a kernel of $L  K$, and the group isomorphism $$L/(L \cap K) \cong L   K/K$$   is a \semifield0 isomorphism.
  \item If $L \subseteq K$, then $K/L$ is a kernel of \ $\mathcal{S}/L$ and one has the \semifield0 isomorphism $$\mathcal{S}/K \cong (\mathcal{S}/L)/(K/L).$$
\end{enumerate}

Let $L $ be a kernel of  a \semifield0 $\mathcal{S}$. Every kernel
of $\mathcal{S}/L$ has the form $K/L$ for some uniquely determined
kernel $K \supseteq L$, yielding a lattice isomorphism
$$ \{ \text{Kernels of} \ \mathcal{S}/L \} \rightarrow  \{ \text{Kernels of} \ \mathcal{S} \ \text{containing} \ L \}$$
given by $K/L \mapsto K$.
\end{thm}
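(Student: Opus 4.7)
The strategy is to mimic the classical isomorphism theorems from group theory, translating ``normal subgroup'' into ``kernel'' and invoking the kernel-congruence correspondence of Proposition~\ref{basicprop} together with the lattice correspondence of Theorem~\ref{latticecor}. The one genuinely new ingredient compared with the group-theoretic setting is verifying \emph{convexity} under addition, for which one must rescale coefficients so that they sum to $\fone$.

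For part (1), I would first observe that $\mathcal U\cap K$ is a subgroup of $(\mathcal U,\cdot)$, and convexity is inherited: for $a,b\in\mathcal U\cap K$ and $\alpha,\beta\in\mathcal U$ with $\alpha+\beta=\fone$, the element $\alpha a+\beta b$ lies in $\mathcal U$ by closure, and in $K$ by convexity of $K$ in $\mathcal S$. The crux is to show that $\mathcal UK$ is a sub-\semifield0 of $\mathcal S$. Closure under multiplication and inversion is routine (using Remark~\ref{bfacts}(iii) for products); for closure under addition, the key identity is
\[
u_1k_1+u_2k_2 \;=\; (u_1+u_2)\Bigl(\tfrac{u_1}{u_1+u_2}\,k_1+\tfrac{u_2}{u_1+u_2}\,k_2\Bigr),
\]
whose second factor lies in $K$ because $\tfrac{u_1}{u_1+u_2}+\tfrac{u_2}{u_1+u_2}=\fone$, so convexity of $K$ applies. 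This same identity shows that $K$ is convex in $\mathcal UK$, hence a kernel of it. The isomorphism is then obtained by restricting the quotient homomorphism $\pi:\mathcal UK\to\mathcal UK/K$ to $\mathcal U$: the restriction is surjective since $\pi(uk)=\pi(u)$, and its kernel (in the semifield sense) is $\pi^{-1}(\fone)\cap\mathcal U=\mathcal U\cap K$. Applying Theorem~\ref{latticecor} gives the desired isomorphism.

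Part (2) is then the special case of (1) with $\mathcal U=L$, noting additionally that $L\cap K$ is a kernel of $\mathcal S$ (intersection of kernels) and that $LK$ is a kernel of $\mathcal S$ by Remark~\ref{bfacts}(iii); closure of $L$ under $+$ is available because kernels in the present setting are sub-\semifields0 by Remark~\ref{bfacts}(vii). For part (3), the hypothesis $L\subseteq K$ makes the assignment $aL\mapsto aK$ a well-defined surjective \semiring0 homomorphism $\mathcal S/L\to\mathcal S/K$ via Proposition~\ref{basicprop}; its ``kernel'' is exactly $\{aL : a\in K\}=K/L$, so Theorem~\ref{latticecor} shows simultaneously that $K/L$ is a kernel of $\mathcal S/L$ and, via the first isomorphism theorem, that $\mathcal S/K\cong(\mathcal S/L)/(K/L)$. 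For the lattice correspondence, the map $K\mapsto K/L$ is visibly order-preserving; surjectivity is Theorem~\ref{latticecor} applied to the quotient map $\mathcal S\to\mathcal S/L$, and injectivity follows because $K$ equals the preimage of $K/L$ (using $L\subseteq K$).

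The main obstacle throughout is the convex-combination identity displayed above: since the algebra of kernels is congruence-based rather than ideal-based, one cannot simply close $\mathcal UK$ under sums by a direct computation, but must first normalize coefficients so that they sum to $\fone$ in order to invoke convexity. Once that normalization trick is in hand, the rest is a routine translation of the standard group-theoretic isomorphism theorems, with Theorem~\ref{latticecor} handling the bookkeeping that distinguishes set-theoretic preimages from kernel-theoretic ones.
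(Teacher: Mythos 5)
The paper itself gives no proof of Theorem~\ref{thm_nother_1_and_3}; it is stated as a reminder of the standard isomorphism theorems for semifield kernels, implicitly deferred to \cite{Kern} and the general kernel--congruence correspondence. So there is no in-paper argument to compare against, and I am reviewing your proposal on its own terms.

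Your plan is correct and is the argument one would expect: the classical isomorphism theorems with convexity substituted for normality, with the normalization identity
$u_1k_1+u_2k_2=(u_1+u_2)\bigl(\tfrac{u_1}{u_1+u_2}k_1+\tfrac{u_2}{u_1+u_2}k_2\bigr)$
doing the real work for closure of $\mathcal UK$ under addition. Two small points worth tightening. First, you say the ``same identity shows that $K$ is convex in $\mathcal UK$''; in fact this is immediate without the identity, since the convexity condition for $K$ viewed inside $\mathcal UK$ quantifies over $\alpha,\beta\in\mathcal UK\subseteq\mathcal S$, and $K$ is already convex in all of $\mathcal S$. Second, Theorem~\ref{latticecor} by itself gives only the lattice correspondence of kernels under a homomorphism; the first isomorphism theorem you invoke to pass from ``surjective homomorphism with kernel $N$'' to ``$A/N\cong B$'' really rests on Proposition~\ref{basicprop} (the kernel--congruence correspondence) plus the standard universal-algebra homomorphism theorem, so you should cite that pairing rather than Theorem~\ref{latticecor} alone. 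You also correctly flag the reliance on Remark~\ref{bfacts}(vii) to make $L$ a sub-\semifield0 in part~(2); this is legitimate because $\mathcal S$ is idempotent throughout this paper, hence so is $\mathcal S/L$, but the hypothesis deserves explicit mention since the remark is conditional. With these two clarifications your argument is complete.
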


\subsection{Principal kernels}$ $

Here are more properties of kernels  of \semifields0\ in terms of
their generators. $\mathcal{S}$ always denotes an idempotent
\semifield0.

\begin{defn}\label{defn_generation_of_kernels}
For a subset $S$ of   $\mathcal{S}$, denote by $\langle S \rangle$
the smallest
 kernel in $\mathcal{S}$ containing~$S$, i.e., the intersection of all kernels in $\mathcal{S}$
 containing $S$.
A kernel $K$ is said to be \textbf{finitely generated}  if $K =
\langle S \rangle$ where $S$ is a finite set.  If $K = \langle a
\rangle$ for some $a \in \mathcal{S}$, then $K$ is called a
\textbf{principal kernel}. \end{defn}

For convenience, we only consider kernels of polynomials with tangible coefficients; in \cite{Kern}
we treated the more general situation of arbitrary coefficients.
We say that $f \in F(\Lambda)$ is  \textbf{positive} if $f(\bfa)\ge 1$ for each $\bfa \in F^{(n)}$.
Given $f = \frac hg$ for $h,g \in \tT(\Lambda),$ we define $|f| = f + f^{-1}.$ Clearly $|f|$ is positive,
and $|f| = 1$ iff $f = 1.$

\begin{lem}\label{lem_generators}\cite[Property 2.3]{Prop_Semifields}
Let $K$ be a kernel  of an idempotent \semifield0 $\mathcal{S}$.
Then for $a,b \in \mathcal{S}$,
\begin{equation}
|a| \in K \ \ \text{or} \ \ |a|+ b \in K \ \ \Rightarrow a \in K.
\end{equation}
\end{lem}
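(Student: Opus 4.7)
The plan is to split the disjunction into its two cases. The implication $|a| \in K \Rightarrow a \in K$ is literally Remark \ref{bfacts}(ii), so nothing new is required.

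For the second implication, $|a| + b \in K \Rightarrow a \in K$, my preferred route is to pass to the quotient semifield $\mathcal{S}/K$, which by Proposition \ref{basicprop} is again an idempotent semifield. The hypothesis translates to
\[
\overline{|a|} + \bar b \;=\; \bar a + \bar a^{-1} + \bar b \;=\; 1
\]
in $\mathcal{S}/K$. Since addition in an idempotent semifield is the semilattice join, every summand is dominated by the sum; in particular $\bar a \le 1$ and $\bar a^{-1} \le 1$. Inversion reverses the order in an $\ell$-group, so the second inequality is equivalent to $\bar a \ge 1$. Together these force $\bar a = 1$, which is precisely the assertion $a \in K$.

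An equivalent route that stays inside $\mathcal{S}$ is to sandwich $1 \le |a| \le |a| + b$ with both endpoints in $K$, invoke the order-convexity of kernels (Remark \ref{bfacts}(i)) to conclude $|a| \in K$, and finish with Remark \ref{bfacts}(ii). The upper bound $|a| \le |a|+b$ is immediate from idempotence of the join. The only mildly delicate point in this route is the lower bound $|a| \ge 1$, the standard $\ell$-group fact that $|a| = a + a^{-1}$ lies in the positive cone; one can derive it from the computation $|a|^2 = a^2 + a^{-2} + 1 \ge 1$ (using $(a+a^{-1})^2 = a^2 + 1 + 1 + a^{-2}$ and idempotence) combined with the implication ``$x^2 \ge 1 \Rightarrow x \ge 1$'' in an Abelian $\ell$-group. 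This positive-cone step is the only place a nontrivial fact is invoked; everything else is routine bookkeeping with the kernel/congruence correspondence, and the quotient route sidesteps even that by leveraging $\bar a^{-1} \le 1 \Leftrightarrow \bar a \ge 1$ directly.
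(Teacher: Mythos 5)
Both routes you give are correct. The paper states Lemma~\ref{lem_generators} purely as a citation of Property~2.3 in \cite{Prop_Semifields} and supplies no proof of its own, so there is no in-paper argument to match; what follows is an independent check. The quotient route is sound: from $\bar a + \bar a^{-1} + \bar b = 1$ in $\mathcal{S}/K$ (which by Proposition~\ref{basicprop} is again the idempotent semifield of a lattice-ordered group) you extract $\bar a \le 1$ and $\bar a^{-1} \le 1$, and order-reversal of inversion turns the second into $\bar a \ge 1$, forcing $\bar a = 1$, i.e.\ $a \in K$. The in-$\mathcal{S}$ route is equally valid and a bit more economical since it never leaves $\mathcal{S}$: one has $1 \in K$ trivially, $|a|+b \in K$ by hypothesis, and $1 \le |a| \le |a|+b$, so Remark~\ref{bfacts}(i) (order-convexity) gives $|a| \in K$ and Remark~\ref{bfacts}(ii) then gives $a \in K$; this also absorbs the first disjunct $|a|\in K$ as a special case. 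For the lower bound $|a| \ge 1$ you need not detour through $|a|^2 \ge 1$ and the $\ell$-group square-root implication; the paragraph immediately preceding Lemma~\ref{lem_generators} already records that $|f|$ is positive, and $a \vee a^{-1} \ge 1$ is standard positivity in a lattice-ordered Abelian group, so citing it is cleanest, though your derivation is also correct.
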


\begin{prop}\label{prop_principal_ker}\cite[Proposition (3.1)]{Prop_Semifields}
\begin{equation}
\langle a \rangle = \{ x \in \mathcal{S} \ : \ \exists n \in
\mathbb{N} \  \ \ \text{such that} \ \ \  a^{-n} \leq x \leq a^{n}
\}.
\end{equation}
\end{prop}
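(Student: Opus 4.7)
The plan is to prove $\langle a \rangle = T$, where
\[
T := \bigl\{\, x \in \mathcal{S} \;:\; \exists\, n \in \mathbb{N}, \ a^{-n} \leq x \leq a^n \,\bigr\},
\]
after first reducing to the case $a \geq 1$. For the reduction, Remark~\ref{bfacts}(v) gives $\langle a \rangle = \langle a + a^{-1}\rangle = \langle |a|\rangle$, and $|a| = a \vee a^{-1} \geq 1$ always holds in a lattice-ordered group, so I may replace $a$ by $|a|$ and assume $a \geq 1$; then $a^{-n} \leq 1 \leq a^n$ for every $n$, and $T$ contains $1$ and $a$.

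The inclusion $T \subseteq \langle a \rangle$ is the easy direction: since $\langle a \rangle$ is a subgroup containing $a$, it contains every $a^{\pm n}$, and by Remark~\ref{bfacts}(i) kernels are convex in the lattice order, so every $x$ sandwiched by $a^{-n} \leq x \leq a^n$ automatically lies in $\langle a \rangle$.

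For the reverse inclusion I would show that $T$ is itself a kernel containing $a$, whence $\langle a \rangle \subseteq T$ by minimality. Membership of $a$ and $1$ is clear; closure under products is trivial (product of bounds with exponents $n$ and $m$ gives bound with exponent $n+m$); and closure under inversion uses that inversion reverses inequalities in the lattice-ordered group $(\mathcal{S},\cdot)$. The key step is convexity of $T$ as a kernel: given $x, y \in T$ with a common exponent $a^{-n} \leq x, y \leq a^n$ (achieved by passing to the maximum of the individual exponents, using $a \geq 1$) and $\alpha, \beta \in \mathcal{S}$ with $\alpha + \beta = 1$, order-preservation of multiplication and distributivity yield
\[
a^{-n} \;=\; (\alpha + \beta)a^{-n} \;\leq\; \alpha x + \beta y \;\leq\; (\alpha + \beta)a^n \;=\; a^n,
\]
so $\alpha x + \beta y \in T$.

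The main obstacle is this convexity check. The trick is to frame it symmetrically—multiplying $a^{-n} \leq x \leq a^n$ by $\alpha$, the analogous bounds for $y$ by $\beta$, and then adding—so that the outer factor $\alpha+\beta = 1$ is absorbed on both sides. Once set up that way, the verification is essentially a one-line computation; the remainder of the argument is mostly bookkeeping about how multiplication, addition (= join), and inversion interact with the lattice order on an idempotent semifield.
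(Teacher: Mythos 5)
The paper itself offers no proof here — it imports Proposition~\ref{prop_principal_ker} verbatim from \cite{Prop_Semifields} — so your argument has to stand on its own. Your core argument (once you are in the case $a \geq 1$) is the right one and is essentially complete: you show the right-hand side $T$ is a subgroup of $(\mathcal{S},\cdot)$ containing $a$, use order-convexity of kernels (Remark~\ref{bfacts}(i)) to get $T \subseteq \langle a \rangle$, and verify the kernel condition for $T$ by passing to a common exponent and using distributivity and monotonicity of $\cdot$ and $+$ on both sides of the sandwich. That convexity computation is the only nontrivial step and it is set up correctly.

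The gap is in the reduction. Replacing $a$ by $|a|$ leaves $\langle a \rangle$ unchanged (Remark~\ref{bfacts}(v)), but it also silently changes the right-hand side from $T_a = \{x : \exists n,\; a^{-n}\leq x\leq a^n\}$ to $T_{|a|}$, and these sets are not equal in general. The inclusion $T_a \subseteq T_{|a|}$ is fine (since $|a|^{-n}\leq a^{\pm n}\leq |a|^n$), but the reverse fails whenever $a \not\geq 1$: for instance, in a totally ordered idempotent \semifield0 with $a <1$ one has $a^{-n}>1>a^n$ for $n\geq 1$, so the sandwich $a^{-n}\leq x\leq a^n$ forces $x=1$ and $T_a=\{1\}$, while $\langle a\rangle = \langle |a|\rangle$ is typically much bigger. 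So what you have actually proved is Corollary~\ref{cor_principal_ker_by_order} (the $|a|$-version), not the Proposition as literally stated. To make the writeup honest you should either (i) assume $a\geq 1$ from the outset — which is evidently the intended reading of \cite{Prop_Semifields}, since the formula is vacuous otherwise — and run the direct argument, or (ii) state that you are proving the $|a|$ form, which is what the paper actually uses downstream. The step \emph{``I may replace $a$ by $|a|$''} needs that caveat; as written it asserts $T_a=T_{|a|}$ without proof, and that assertion is false.
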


\begin{cor}\label{cor_principal_ker_by_order}\cite[Corollary 4.1.16]{Kern}
For any $a \in \mathcal{S}$,
\begin{align*}
\langle a \rangle = \ & \{ x \in \mathcal{S} \ : \ \exists n \in
\mathbb{N} \ \text{such that} \  |a|^{-n} \leq   x \leq  |a|^{n} \}.
\end{align*}
\end{cor}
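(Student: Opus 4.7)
The plan is to reduce the statement directly to Proposition~\ref{prop_principal_ker} by proving that $\langle a \rangle = \langle |a| \rangle$. Once this is done, applying the proposition to $|a|$ in place of $a$ gives the corollary immediately, and the bounds are well-behaved because $|a| = a + a^{-1} \geq 1$ in an idempotent \semifield0 (so $|a|^{-n} \leq 1 \leq |a|^n$ are consistent two-sided bounds).

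For the inclusion $\langle |a| \rangle \subseteq \langle a \rangle$, I would argue that in any idempotent \semifield0 a kernel is automatically closed under addition: applying the convexity axiom with $\alpha = \beta = 1$ is legitimate since $1 + 1 = 1$ in the idempotent setting, giving $a + b \in K$ for all $a,b \in K$. Since $a \in \langle a \rangle$ and $\langle a \rangle$ is a multiplicative subgroup, $a^{-1} \in \langle a \rangle$, and therefore $|a| = a + a^{-1} \in \langle a \rangle$. Hence $\langle |a| \rangle \subseteq \langle a \rangle$.

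For the reverse inclusion $\langle a \rangle \subseteq \langle |a| \rangle$, I would apply Lemma~\ref{lem_generators}: since $|a| \in \langle |a| \rangle$, the lemma yields $a \in \langle |a| \rangle$, and then $\langle a \rangle \subseteq \langle |a| \rangle$ by minimality of the generated kernel.

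Combining the two inclusions gives $\langle a \rangle = \langle |a| \rangle$, and Proposition~\ref{prop_principal_ker} applied to $|a|$ produces exactly
\[
\langle a \rangle = \langle |a| \rangle = \{\, x \in \mathcal{S} \ :\ \exists n \in \mathbb{N},\ |a|^{-n} \leq x \leq |a|^{n} \,\}.
\]
There is no real obstacle here; the only point that needs care is the observation that kernels of an idempotent \semifield0 are sum-closed, which is what makes $|a| \in \langle a \rangle$ and hence allows the reduction to the proposition.
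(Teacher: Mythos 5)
Your proof is correct and takes the natural route: show $\langle a \rangle = \langle |a| \rangle$ and then specialize Proposition~\ref{prop_principal_ker} to $|a|$. Both inclusions are handled properly — $|a| \in \langle a\rangle$ follows from sum-closure of kernels in an idempotent \semifield0\ (your argument via the convexity axiom with $\alpha=\beta=1$ is valid since $1+1=1$; alternatively one could cite Remark~\ref{bfacts}(vii)), and $a \in \langle |a|\rangle$ is exactly Lemma~\ref{lem_generators} (or Remark~\ref{bfacts}(iii)). This is the standard argument behind the cited result, so nothing further is needed.
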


\begin{defn}\label{defn_generation_of_kernels1} A \semifield0 is said to be
\textbf{finitely generated} if it is finitely generated as a kernel.
If $\mathcal{S} = \langle a \rangle$ for some $a \in \mathcal{S}$,
then $\mathcal{S}$ is said to be a \textbf{principal \semifield0},
with \textbf{generator} $a$.
\end{defn}

%

\begin{thm}\label{thm_semi_with_a_gen}\cite[Theorem 4.1.19]{Kern}
If an archimedean idempotent \semifield0\ $F$ has a finite number of
generators $a_1, \dots, a_n$, then $F$ is a principal
\semifield0, generated by $a  = |a_1|   + \dots + |a_n| .$
\end{thm}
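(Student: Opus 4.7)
The plan is to show $F = \langle a \rangle$ by proving the two containments, where the nontrivial direction is $F \subseteq \langle a \rangle$. Since $F$ is generated as a kernel by $\{a_1, \dots, a_n\}$, the minimality of $\langle a_1, \dots, a_n \rangle$ among kernels containing these elements means it suffices to show each $a_i \in \langle a \rangle$.

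First I would record the order-theoretic facts. Each $|a_i| = a_i + a_i^{-1}$ satisfies $|a_i| \geq 1$, and under the interpretation of Remark \ref{corresp}, $a = |a_1| + \cdots + |a_n|$ is the join $\bigvee_i |a_i|$ in the lattice-ordered group underlying $F$. Hence $1 \leq |a_i| \leq a$, and inverting reverses the inequalities to give $a^{-1} \leq |a_i|^{-1} \leq 1$. Chaining these yields
\begin{equation*}
a^{-1} \;\leq\; |a_i|^{-1} \;\leq\; 1 \;\leq\; |a_i| \;\leq\; a.
\end{equation*}

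Next, by Proposition \ref{prop_principal_ker}, $\langle a \rangle = \{x \in F : a^{-m} \leq x \leq a^{m} \text{ for some } m \in \mathbb{N}\}$. Taking $m = 1$, the chain above places $|a_i|$ inside $\langle a \rangle$. Now Remark \ref{bfacts}(ii) (which says a kernel containing $|a|$ contains $a$) yields $a_i \in \langle a \rangle$. Therefore $\{a_1, \dots, a_n\} \subseteq \langle a \rangle$, so $F = \langle a_1, \dots, a_n \rangle \subseteq \langle a \rangle$; the reverse inclusion is immediate from $a \in F$.

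I do not expect a serious obstacle; the proof is essentially bookkeeping, combining the concrete description of $\langle a \rangle$ with the passage from $|a_i|$ back to $a_i$. The one point worth flagging is the role of the archimedean hypothesis: Proposition \ref{prop_principal_ker} applies to any idempotent \semifield0, and the argument above does not appear to invoke archimedean-ness directly, so the assumption is likely there for uniformity with the running context rather than for strict necessity in this step.
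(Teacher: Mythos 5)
Your proof is correct, and it is essentially a self-contained derivation of Remark~\ref{bfacts}(v) (cited from \cite{Cont_Semifields}), which already records that a kernel generated by a finite set $\{s_1,\dots,s_m\}$ is generated by the single element $\sum_i(s_i+s_i^{-1})$; applying that remark to $F=\langle a_1,\dots,a_n\rangle$ yields the theorem at once, and your argument correctly reduces that remark to Proposition~\ref{prop_principal_ker} and Remark~\ref{bfacts}(ii). Your observation about the archimedean hypothesis is also on target: it plays no role in this particular argument and is carried along for uniformity with the running context, resurfacing in earnest in the Note immediately following, where it forces $\mathcal{S}=\langle\alpha\rangle$ for every $\alpha\neq 1$.
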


%
%

 \begin{note}\cite{Kern}  In view of Proposition~\ref{prop_principal_ker}, $\mathcal{S} =
 \langle \alpha \rangle$ for each $\alpha \ne \{1\}$.
\end{note}
%

\begin{defn} By \textbf{sublattice} of the lattice of kernels, we
mean a subset that is a lattice with respect to intersection and
multiplication. \end{defn}

\begin{cor}\label{cor_principal_kernels_sublattice}\cite[Corollary 4.1.26]{Kern}
The set of principal kernels of an idempotent \semifield0\ forms a
sublattice of the lattice of kernels.
\end{cor}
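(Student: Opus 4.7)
The plan is to show that for any principal kernels $\langle a \rangle, \langle b \rangle$ of $\mathcal{S}$, both the intersection $\langle a \rangle \cap \langle b \rangle$ and the product $\langle a \rangle \langle b \rangle$ are again principal. Since the lattice of kernels is closed under intersection and under products (Remark \ref{bfacts}(iii), (iv)), this suffices.

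The product case is essentially immediate: by Remark \ref{bfacts}(iii), $\langle a \rangle \langle b \rangle$ is the smallest kernel containing $\{a,b\}$, which is $\langle a, b\rangle$. By Remark \ref{bfacts}(v) this two-generator kernel is in fact principal, generated by $(a+a^{-1})+(b+b^{-1}) = |a|+|b|$, so $\langle a \rangle \langle b \rangle = \langle |a|+|b|\rangle$.

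For the intersection, I would introduce the lattice meet $c := |a| \wedge |b| = \dfrac{|a|\,|b|}{|a|+|b|}$, which lies in $\mathcal{S}$ and satisfies $c \ge 1$ (since $|a|,|b|\ge 1$). The claim is $\langle c \rangle = \langle a \rangle \cap \langle b \rangle$. For the inclusion $\langle c \rangle \subseteq \langle a \rangle \cap \langle b \rangle$, note $1 \le c \le |a|$, so by Corollary \ref{cor_principal_ker_by_order}, $c \in \langle a \rangle$; symmetrically $c \in \langle b \rangle$, and passing to the generated kernels gives the inclusion. For the reverse inclusion, take $x \in \langle a \rangle \cap \langle b \rangle$; by Corollary \ref{cor_principal_ker_by_order} there is a common $n$ with $|a|^{-n} \le x \le |a|^n$ and $|b|^{-n} \le x \le |b|^n$, so
\begin{equation*}
c^{-n} \;=\; (|a| \wedge |b|)^{-n} \;\le\; x \;\le\; (|a|\wedge|b|)^n \;=\; c^n,
\end{equation*}
placing $x \in \langle c \rangle$ by another application of Corollary \ref{cor_principal_ker_by_order}.

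The one nontrivial ingredient, and the place I expect to spend most care, is the identity $(|a|\wedge|b|)^n = |a|^n \wedge |b|^n$ (and its inverse counterpart for the lower bound). This is the standard fact that on the positive cone of an abelian lattice-ordered group the $n$-th power map is a lattice homomorphism; via Remark \ref{corresp} the multiplicative group of $\mathcal{S}$ is such an $\ell$-group, so the identity applies and closes the argument.
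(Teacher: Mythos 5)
Your proof is correct and is the standard argument the cited reference [Kern] uses: the identities $\langle a\rangle\langle b\rangle=\langle|a|+|b|\rangle$ (your product step is exactly Remark~\ref{bfacts}(v)) and $\langle a\rangle\cap\langle b\rangle=\langle|a|\wedge|b|\rangle$ (which the paper itself invokes in Remark~\ref{rem_wedge_equivalence}, citing [Kern, Cor.~4.1.25]). The only caveat worth recording is the one you already flag: the step $(|a|\wedge|b|)^n=|a|^n\wedge|b|^n$ genuinely needs the $\ell$-group lemma that the $n$-th power map preserves finite meets on the positive cone --- equivalently, inside the idempotent \semifield0, the ``freshman's dream'' $(u+v)^n=u^n+v^n$ transported through the involution $(*)$ of~\eqref{starry} --- and your appeal to Remark~\ref{corresp} to place $\mathcal S$ in the $\ell$-group setting is the right way to invoke it.
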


\begin{cor}\label{prop_frac_semifield_finitely_gen}\cite[Corollary 4.1.27]{Kern}
For any generator $a$ of a \semifield0 $F$,  $F(\Lambda) = \langle a
\rangle \prod_{i=1}^{n} \langle \la_i \rangle$,  and $F(\Lambda)$ is
a principal \semifield0\ with generator $\sum_{i=1}^n|\la_{i}|+
|a|$.
\end{cor}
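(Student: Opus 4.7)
The plan is that both parts of the corollary follow once we verify that $\{a,\la_1,\dots,\la_n\}$ is a set of kernel generators for all of $F(\Lambda)$. Given this, Remark~\ref{bfacts}(iii),(iv) identifies $\langle a\rangle\prod_{i=1}^n\langle \la_i\rangle$ with the smallest kernel containing that set, which is then $F(\Lambda)$ itself, establishing the first equality; and Theorem~\ref{thm_semi_with_a_gen}, applied to the finite kernel-generating set $\{a,\la_1,\dots,\la_n\}$ of $F(\Lambda)$, converts them into the single principal generator $|a|+\sum_{i=1}^n|\la_i|$, giving the second.

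The remaining task is to show that every $f\in F(\Lambda)$ lies in $\langle a,\la_1,\dots,\la_n\rangle$. Setting $w:=|a|+\sum_{i=1}^n|\la_i|$, which already belongs to this kernel (it is built from the generators via $+$), it suffices by Corollary~\ref{cor_principal_ker_by_order} to sandwich $|f|$ between $|w|^{-N}$ and $|w|^N$ for some $N$; Lemma~\ref{lem_generators} then places $f$ itself in $\langle w\rangle\subseteq\langle a,\la_1,\dots,\la_n\rangle$. Writing $f=g/h$ with $g,h\in F[\Lambda]$ having tangible coefficients, each coefficient lies in $F=\langle a\rangle$ and is by Corollary~\ref{cor_principal_ker_by_order} sandwiched between $|a|^{-m}$ and $|a|^{m}$ for some $m$, while each monomial $\la_1^{e_1}\cdots\la_n^{e_n}$ of total degree at most $d$ is sandwiched between $w^{-d}$ and $w^{d}$, using $w\ge|\la_i|$ in the idempotent order. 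Since $w\ge|a|$ and $w\ge|\la_i|$ in that order, a single sufficiently large power of $|w|$ dominates every term of $g$ and the inverse of every term of $h$, producing the required exponent $N$.

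The main obstacle is keeping the sandwich bounds honest across the quotient structure, where cancellations among monomials in $g$ and $h$ can behave subtly in the $\nu$-semifield setting. The systematic device is to argue throughout with the always-positive quantity $|\cdot|\ge 1$ and to bound numerator and denominator separately before combining; the idempotent inequalities $w\ge|a|$ and $w\ge|\la_i|$ then guarantee that one power of $|w|$ absorbs every finite product of elementary bounds, so that only the final step — reassembling numerator and denominator bounds into a single inequality for $|f|=|g/h|$ — requires care.
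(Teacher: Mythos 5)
Your plan is sound and is surely the intended one (the paper simply cites~\cite[Corollary~4.1.27]{Kern}, which follows from~\cite[Theorem~4.1.19]{Kern}, i.e.~Theorem~\ref{thm_semi_with_a_gen} here): show that $\{a,\la_1,\dots,\la_n\}$ kernel-generates $F(\Lambda)$ via the sandwiching argument of Corollary~\ref{cor_principal_ker_by_order}, identify $\langle a\rangle\prod\langle\la_i\rangle$ with $\langle a,\la_1,\dots,\la_n\rangle$ by Remark~\ref{bfacts}(iii), and collapse the finite generating set to the single generator $\sum|\la_i|+|a|$. One small refinement: for the last step you may prefer to cite Remark~\ref{bfacts}(v) rather than Theorem~\ref{thm_semi_with_a_gen}, since the latter carries an archimedean hypothesis on the ambient \semifield0\ which $F(\Lambda)$ itself need not satisfy (only $F$ is assumed $\nu$-archimedean); Remark~\ref{bfacts}(v) gives the same single-generator conclusion without that hypothesis.
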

.

\subsection{$\nu$-kernels}$ $

Let us make this all supertropical.

 \begin{defn}\label{vsemi24} A $\nu$-\textbf{congruence} on a \vdomain0 $R$ is a  congruence $\Cong$
 for which $(a,b) \in \Cong$ iff $(a^\nu, b^\nu) \in \Cong.$
We write $a_1 \equiv_\nu  a_2$ when  $a_1^\nu \equiv  a_2^\nu.$
\end{defn}

%
%

\begin{rem}\cite[Remark~4.1.27]{Kern}
For any congruence $\Cong$ of $\tG$,   $\nu^{-1}(\Cong) := \{(a,b):
a\nucong b\}$  is  a $\nu$-congruence of $R$.

Any $\nu$-congruence $\Cong = \{(a,b): a,b \in R\}$ of $R$ defines a
congruence $\Cong^\nu = \{(a^\nu ,b^\nu ): (a,b) \in \Cong \}$ of
$\tG$. Conversely, if $\Cong_\nu $ is a congruence of $\tG$, then
$\nu^{-1}(\Cong_\nu )$ is a $\nu$-congruence of $R$.

\end{rem}

Similarly, we have:

 \begin{defn}\label{vsemi29}
 A $\nu$-\textbf{kernel} of a \vsemifield0 $\mathbb S$ is a
subgroup $K$ which is $\nu$-\textbf{convex} in the sense that if
$a,b \in K$ and $\a, \beta \in \mathbb S$ with $\a + \beta \nucong
\fone,$ then $\a a + \beta b \in K.$

\end{defn}

\begin{rem}
 Any $\nu$-kernel ${\mathcal K}$ of a
$\nu$-\semifield0  $\mathbb S$ defines a kernel ${\mathcal K}^\nu$
of $\tG$. Conversely, if ${\mathcal K} $ is a kernel of $\tG$, then
$\nu^{-1}({\mathcal K})$ is a $\nu$-kernel of  $\mathbb S$.

 If
${\mathcal K} = {\mathcal K}_\Cong$, then $\nu^{-1}({\mathcal K}) =
{\mathcal K}_{\nu^{-1}(\Cong)}$.
\end{rem}

%
%
%
%

We recall the  monoid automorphism $(*)$ of order 2 of
\cite[Remark~3.3.1]{Kern} given by
$$a^* =   a^{-1}, \qquad (a^\nu)^*  = (a^{-1})^\nu, \qquad a \in \tT.$$

We also define the lattice supremum $a\wedge b = a+b$ and
\begin{equation}\label{starry}a\wedge b = (a^*+b^*)^*,\end{equation}
which is the lattice infinum.

\begin{rem}\label{bfacts1}\cite[Remark~4.2.5]{Kern}  $ $
\begin{enumerate} \eroman
\item  Given a $\nu$-congruence $\Cong$ on a $\nu$-\semifield0~$\mathbb S,$ we
define $K_\Cong = \left\{ a  \in \mathbb S  : a \equiv_\nu
1\right\}$. Conversely,  given a $\nu$-kernel $K$ of $\mathbb S$, we
define the $\nu$-congruence $\Cong $ on $\mathbb S$ by  $a \equiv b$
iff $ a b^* \equiv_\nu \fone.$

 \item   Any  $\nu$-kernel $K$ is $\nu$-convex,
  in the sense that if $a \le_\nu  b \le_\nu  c$ with $a,c \in K$,
then $b \in K.$

\item  If $|a|\in K$, a $\nu$-kernel, then $a\in K$.

 \item The product of two $\nu$-kernels is a $\nu$-kernel.

 \item The intersection of  $\nu$-kernels is a $\nu$-kernel.

\item   Any $\nu$-kernel generated by a finite set $\{ s_1, \dots,
s_m\}$ is generated by the single element $\sum _{i=1}^m (|s_i|).$

\item The $\nu$-kernel generated by $a \in \mathbb S$ is just the set of
finite sums $  \{ \sum_i  b_i  a^i : \ b_i \in \mathbb S, \ \sum b_i
\nucong 1\}.$

\item   If $K$ is a $\nu$-kernel of a \vsemifield0 $\mathbb S$ and the
\vsemifield0 $\mathbb S/K$ is $\nu$-idempotent, then $K$ is a
sub-\vsemifield0 of $\mathbb S$. (This is because for $a,b \in K$
the image of $a+b$ is $\nu$-equivalent to $1K + 1K = 1K.$)

\end{enumerate}
\end{rem}

In view of (v) for any set $S \subset \mathbb S$ we can define the
$\nu$-kernel $\langle S \rangle$  \textbf{generated by} $S$ to be
the intersection of all $\nu$-kernels containing $S$. {\it In what
follows, we only consider $\nu$-kernels generated by tangible
elements, in order to avoid ``ghost kernels'' and obtain the
following observation.}

\begin{prop}\label{prop_maximal_kernels_in_semifield_of_fractions_part1}
For any $\gamma_1,....,\gamma_n \in F$ the kernel $\langle
\frac{\la_1}{\gamma_1},...,\frac{\la_n}{\gamma_n} \rangle$ is a
maximal kernel of $F(\Lambda)$.
\end{prop}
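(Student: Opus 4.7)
The plan is to realize $F(\Lambda)/K$ as a copy of $F$ via evaluation, and then to observe that $F$ itself has no proper non-trivial kernels, so that maximality of $K := \langle \la_1/\gamma_1,\dots,\la_n/\gamma_n\rangle$ follows from the lattice correspondence in Theorem~\ref{thm_nother_1_and_3}.

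First, I would introduce the evaluation map
\[ \phi : F(\Lambda) \longrightarrow F, \qquad \la_i \longmapsto \gamma_i, \]
which is a well-defined \semifield0 homomorphism because each $\gamma_i$ is tangible and invertible in $F$. By Theorem~\ref{latticecor} the set $K' := \phi^{-1}(1)$ is a kernel, and $K \subseteq K'$ since $\phi(\la_i/\gamma_i) = 1$ for each $i$. For the reverse inclusion, I observe that in $F(\Lambda)/K$ the relations $\la_i \equiv \gamma_i$ propagate through all polynomial expressions, so every $h/g$ with $h,g \in \tT[\Lambda]$ satisfies $h/g \equiv h(\gamma)/g(\gamma) \in F$ modulo $K$. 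Consequently the canonical composition
\[ F \hookrightarrow F(\Lambda) \twoheadrightarrow F(\Lambda)/K \]
is surjective; composing further with $F(\Lambda)/K \twoheadrightarrow F(\Lambda)/K' \xrightarrow{\sim} F$ (the last isomorphism induced by $\phi$) is the identity on $F$. Hence the surjection $F \to F(\Lambda)/K$ is also injective, and the middle arrow $F(\Lambda)/K \twoheadrightarrow F(\Lambda)/K'$ must be an isomorphism, forcing $K = K'$. In particular $F(\Lambda)/K \cong F$.

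It then suffices to check that $F$ has no kernels other than $\{1\}$ and $F$, for by Theorem~\ref{thm_nother_1_and_3} the kernels of $F(\Lambda)$ containing $K$ correspond bijectively to the kernels of $F(\Lambda)/K \cong F$. Given a non-trivial kernel $N$ of $F$, pick $a \in N$ with $a \ne 1$; by Remark~\ref{bfacts1}(iii) we may replace $a$ by $|a|$ and assume $a >_\nu 1$. For any $b \in F$, the $\nu$-archimedean hypothesis on $F$ supplies $m \in \Net$ with $a^m \ge_\nu |b|$, and then $\nu$-convexity (Remark~\ref{bfacts1}(ii)) applied to $1 \le_\nu |b| \le_\nu a^m$ places $|b|$ in $N$, whence $b \in N$ again by Remark~\ref{bfacts1}(iii). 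So $N = F$.

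The main obstacle is the inclusion $K' \subseteq K$: a direct attack via the principal-kernel description in Proposition~\ref{prop_principal_ker} would require bounding an arbitrary $f \in K'$ by powers of $\sum_i |\la_i/\gamma_i|$, which is awkward. Routing through $F \hookrightarrow F(\Lambda) \to F(\Lambda)/K$ avoids this entirely, trading the explicit bound for the observation that the quotient has already collapsed to $F$ by the time we reduce $\la_i$ to $\gamma_i$.
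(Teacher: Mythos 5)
Your proof is correct and follows essentially the same route as the paper's: the paper simply states ``the quotient is isomorphic to $F$, which is simple,'' and you supply both halves of that claim — the evaluation map $\la_i\mapsto\gamma_i$ identifying $F(\Lambda)/K$ with $F$, and the $\nu$-archimedean argument showing $F$ has no proper nontrivial kernel — before invoking the lattice correspondence of Theorem~\ref{thm_nother_1_and_3}.
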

\begin{proof} The quotient is isomorphic to $F$, which is simple.
\end{proof}

\subsection{The kernel $\langle F \rangle$}$ $

$\langle F \rangle$ denotes the kernel of $F(\Lambda)$  generated by
any element $\a \ne 1$ of $F$. This kernel plays a special role in
the theory, as seen in \cite{Kern}.

 \begin{cor}\label{gen5}
$F(\Lambda) =   F \cdot L_{(\alpha_1,...,\alpha_n)} = \langle F
\rangle \cdot L_{(\alpha_1,...,\alpha_n)}$.\end{cor}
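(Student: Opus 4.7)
The plan is to identify $L_{(\alpha_1,\dots,\alpha_n)}$ with the kernel $\langle \la_1/\alpha_1,\dots,\la_n/\alpha_n\rangle$ of Proposition~\ref{prop_maximal_kernels_in_semifield_of_fractions_part1}, and to exploit two of its features simultaneously: (a) it is maximal, and (b) it is precisely the kernel of the evaluation homomorphism $\mathrm{ev}:F(\Lambda)\to F$ sending $\la_i\mapsto\alpha_i$, so that $F(\Lambda)/L_{(\alpha_1,\dots,\alpha_n)}\cong F$ via $\mathrm{ev}$. The feature (b) is what makes the proof of that proposition work, and feature (a) is what it asserts.

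For the first equality $F(\Lambda)=F\cdot L_{(\alpha_1,\dots,\alpha_n)}$, I would argue by direct factorization. Take any $g\in F(\Lambda)$; write it as $h/h'$ with $h,h'$ tangible polynomials. Because $F$ is a \semifield0 (no zero), $h'(\alpha_1,\dots,\alpha_n)$ is invertible, so $\alpha:=\mathrm{ev}(g)=g(\alpha_1,\dots,\alpha_n)$ lies in $F$. Then $\mathrm{ev}(g\alpha^{-1})=1$, hence $g\alpha^{-1}\in L_{(\alpha_1,\dots,\alpha_n)}$, and $g=\alpha\cdot(g\alpha^{-1})\in F\cdot L_{(\alpha_1,\dots,\alpha_n)}$. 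The reverse inclusion is trivial since $F, L_{(\alpha_1,\dots,\alpha_n)}\subseteq F(\Lambda)$.

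For the second equality $F(\Lambda)=\langle F\rangle\cdot L_{(\alpha_1,\dots,\alpha_n)}$, one route is simply to combine $F\subseteq\langle F\rangle$ with the first equality to conclude $F(\Lambda)=F\cdot L_{(\alpha_1,\dots,\alpha_n)}\subseteq\langle F\rangle\cdot L_{(\alpha_1,\dots,\alpha_n)}\subseteq F(\Lambda)$. A conceptually cleaner alternative is to apply the maximality of $L_{(\alpha_1,\dots,\alpha_n)}$: by Remark~\ref{bfacts}(iii) the set $\langle F\rangle\cdot L_{(\alpha_1,\dots,\alpha_n)}$ is a kernel containing $L_{(\alpha_1,\dots,\alpha_n)}$, so by maximality it equals either $L_{(\alpha_1,\dots,\alpha_n)}$ or $F(\Lambda)$; but for any $\alpha\in F$ with $\alpha\neq 1$, the note following Corollary~\ref{cor_principal_kernels_sublattice} gives $\alpha\in\langle F\rangle$, while $\mathrm{ev}(\alpha)=\alpha\neq 1$ forces $\alpha\notin L_{(\alpha_1,\dots,\alpha_n)}$, ruling out the first possibility.

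The delicate point is the first equality: I have to be sure $F\cdot L_{(\alpha_1,\dots,\alpha_n)}$ is really a set of single products rather than merely a sub-\semifield0 generated by $F$ and $L_{(\alpha_1,\dots,\alpha_n)}$ under sums as well. The evaluation argument sidesteps this because it produces the explicit factorization $g=\alpha\cdot(g\alpha^{-1})$ element by element, so closure of $F\cdot L_{(\alpha_1,\dots,\alpha_n)}$ under $+$ (which is needed for Theorem~\ref{thm_nother_1_and_3}(1) to apply) falls out automatically from the conclusion.
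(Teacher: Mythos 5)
The paper states Corollary~\ref{gen5} with no proof at all (and indeed never formally defines $L_{(\alpha_1,\dots,\alpha_n)}$ before this point; the notation only gets pinned down later in Example~\ref{exmp_basis_for_semifield_of_fractions} and in the proof of Lemma~\ref{prop_maximal_kernels_in_semifield_of_fractions_part2}). So there is no written argument in the paper to compare against, and what you have produced is a reconstruction. It is a correct one, and it is the natural one: the first equality is precisely the element-by-element factorization $g = \alpha\cdot(g\alpha^{-1})$ coming from the isomorphism $F(\Lambda)/L_{(\alpha_1,\dots,\alpha_n)}\cong F$ that the paper already invokes in the one-line proof of Proposition~\ref{prop_maximal_kernels_in_semifield_of_fractions_part1}, and the second equality is then a formality from $F\subseteq\langle F\rangle$. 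Your concluding remark about why $F\cdot L_{(\alpha_1,\dots,\alpha_n)}$ really is a set of single products is well taken; the explicit factorization is exactly what makes that harmless.

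Two small points of precision. First, the fact that $\alpha\in\langle F\rangle$ for $\alpha\in F\setminus\{1\}$ is immediate from the paper's definition of $\langle F\rangle$ (the kernel generated by any such $\alpha$); the note after Corollary~\ref{cor_principal_kernels_sublattice} is about a \semifield0 $\mathcal S=\langle\alpha\rangle$, not about membership in $\langle F\rangle$, so that citation is slightly off even though the conclusion is right. Second, you justify $\alpha:=\mathrm{ev}(g)\in F$ being invertible by ``$F$ is a \semifield0 (no zero)''; in the \vsemifield0 setting one should be a little more careful, since only tangible elements of a \vsemifield0 need be units. The clean way to see it is the one already implicit in your setup: since $g$ is a unit of $F(\Lambda)$ and $\mathrm{ev}$ is a homomorphism onto $F$ with kernel $L_{(\alpha_1,\dots,\alpha_n)}$, the image $\alpha=\mathrm{ev}(g)$ is automatically a unit of $F$, so $\alpha^{-1}$ exists and $g\alpha^{-1}\in\mathrm{ev}^{-1}(1)=L_{(\alpha_1,\dots,\alpha_n)}$.
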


\begin{lem}\label{prop_maximal_kernels_in_semifield_of_fractions_part2}
If $K$ is a maximal kernel of  $\langle F \rangle$, then
$$K \in \Omega\left(\left\langle \frac{\la_1}{\alpha_1}, ... ,
\frac{\la_n}{\alpha_n} \right\rangle\right)$$ for suitable
$\alpha_1,...,\alpha_n \in F$.
\end{lem}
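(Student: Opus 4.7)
The plan is to analyze the quotient $\langle F \rangle / K$, identify it with $F$ via the canonical map $\bar\iota: F \hookrightarrow \langle F \rangle \twoheadrightarrow \langle F \rangle / K$, and then use this isomorphism to reconstruct an evaluation point $(\alpha_1, \dots, \alpha_n) \in F^n$. Write $\pi : \langle F \rangle \to \langle F \rangle / K$, which is a simple \vsemifield0\ since $K$ is maximal, and set $\bar\iota := \pi|_F$. Since $F$ is itself simple (being archimedean), $\bar\iota$ is either trivial or injective. To rule out triviality, suppose $F \subseteq K$ and pick any $\alpha \in F$ with $\alpha \ne 1$: Proposition~\ref{prop_principal_ker} applied inside $\langle F \rangle$ shows that the kernel of $\langle F \rangle$ generated by $\alpha$ equals $\{x \in \langle F \rangle : \alpha^{-n} \le x \le \alpha^n \text{ for some } n\}$, which is all of $\langle F \rangle$ by the very definition of $\langle F \rangle$ as a kernel of $F(\Lambda)$; this forces $K = \langle F \rangle$, contradicting properness. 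Hence $\bar\iota$ is injective.

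For surjectivity, take $x \in \langle F \rangle$ and choose $n$ with $\alpha^{-n} \le x \le \alpha^n$; applying $\pi$ gives $\bar\iota(\alpha^{-n}) \le \pi(x) \le \bar\iota(\alpha^n)$, so every element of $\langle F \rangle/K$ sits in the order-interval spanned by $\bar\iota(F)$. A convexity/pinching argument --- invoking the divisibility of $F$, its archimedean property passed to the quotient, and the simplicity of $\langle F \rangle / K$ --- shows that $\bar\iota(F)$ is a nontrivial convex sub-\vsemifield0 of $\langle F \rangle/K$, hence itself a kernel; simplicity then forces $\bar\iota(F) = \langle F \rangle / K$. Thus $\bar\iota$ is an isomorphism $F \xrightarrow{\sim} \langle F \rangle / K$, and $\pi$ is a surjection $\langle F \rangle \twoheadrightarrow F$ splitting the natural inclusion.

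Finally, extract the $\alpha_i$'s and extend to $F(\Lambda)$. For each $i$ the bounded lattice-truncations $t_{i,a} := (\la_i \vee a^{-1}) \wedge a$ (with $a \in F$) lie in $\langle F \rangle$, and their images $\bar\iota^{-1}(\pi(t_{i,a})) \in F$ form a coherent family as $a$ varies; by the archimedean divisibility of $F$, this family pins down a unique $\alpha_i \in F$ with $\pi(t_{i,a}) = \bar\iota\!\big((\alpha_i \vee a^{-1}) \wedge a\big)$ for every $a$. Defining $\tilde\pi : F(\Lambda) \to F$ by $\tilde\pi|_F = \mathrm{id}_F$ and $\tilde\pi(\la_i) = \alpha_i$ gives a \semifield0\ homomorphism equal to evaluation at $(\alpha_1, \dots, \alpha_n)$, whose kernel is $L_{(\alpha)} = \langle \la_1/\alpha_1, \dots, \la_n/\alpha_n \rangle$. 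Restricting to $\langle F \rangle$ and using $\tilde\pi|_{\langle F \rangle} = \pi$ yields $K = \langle F \rangle \cap L_{(\alpha)}$, establishing the desired membership $K \in \Omega(\langle \la_1/\alpha_1, \dots, \la_n/\alpha_n \rangle)$. The main obstacle is the surjectivity of $\bar\iota$: one must leverage the archimedean divisibility of $F$ and the bounded-function characterization of $\langle F \rangle$ to preclude ``exotic'' elements of the simple quotient not already in $\bar\iota(F)$; once this is done, the coordinate extraction and coherent extension to $F(\Lambda)$ become a matter of tracking Dedekind-cut data through the isomorphism.
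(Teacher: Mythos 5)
Your proposal takes a genuinely different route from the paper's. The paper's proof is geometric and very short: it invokes the $\onenu$-set (Zariski) correspondence, arguing that a proper kernel $K$ of $\langle F\rangle$ has $\Skel(K)\neq\emptyset$, picks a point $a=(\alpha_1,\dots,\alpha_n)\in\Skel(K)$, notes that the point kernel $\langle L_a\rangle = \langle\la_1/\alpha_1,\dots,\la_n/\alpha_n\rangle\cap\langle F\rangle$ satisfies $\Skel(L_a)=\{a\}\subseteq\Skel(K)$, hence $\langle L_a\rangle\supseteq K$, and concludes $K=\langle L_a\rangle$ by maximality. You instead try to identify the simple quotient $\langle F\rangle/K$ with $F$ algebraically and then reconstruct the evaluation point from that identification. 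This is an attractive reversal, but it has a genuine gap.

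The gap is the surjectivity step. You assert that $\bar\iota(F)$ is a ``convex sub-\vsemifield0\ of $\langle F\rangle/K$, hence itself a kernel,'' and then invoke simplicity. But order-convexity of a sub-\semifield0\ is not automatic here and is not the same thing as the kernel-convexity condition ($a,b\in N$, $\alpha+\beta=1$ $\Rightarrow$ $\alpha a+\beta b\in N$), and neither is established by merely sandwiching every element of the quotient between images of constants. (Indeed $F$ itself is \emph{not} order-convex in $\langle F\rangle$: bounded nonconstant rational functions lie between constants.) Showing that $\bar\iota(F)$ is a kernel of the quotient is precisely the point where the hard content resides, and no actual argument is given. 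The same issue resurfaces in your coordinate extraction: the coherent truncation family $\pi(t_{i,a})$ either stabilizes at some $\alpha_i\in F$, or it is identically $a$ (resp.\ $a^{-1}$) for all $a$ --- the ``escaping to infinity'' case --- and you must rule the latter out. Nothing in archimedean divisibility by itself precludes this; what precludes it is exactly the nonemptiness of $\Skel(K)$ for a proper kernel of $\langle F\rangle$ (the Nullstellensatz-type input the paper relies on, via Construction~\ref{HO_construction} and the bounded-from-below machinery). As written, your argument implicitly assumes what it would need to prove. To repair it you would either have to prove that every proper kernel of $\langle F\rangle$ has nonempty $\onenu$-set --- at which point you have essentially reconstructed the paper's proof --- or supply a direct argument that $\bar\iota(F)$ is a kernel of $\langle F\rangle/K$, which is not done.
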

\begin{proof}
Denote $L_a = (|\frac{\la_1}{\alpha_1}| + ....+
|\frac{\la_n}{\alpha_n}|)    \wedge |\alpha|$ with $\alpha \neq
1$, for $a = (\alpha_1, ..., \alpha_n)$. We may assume that
$\Skel(K) \neq \emptyset$, since the kernel corresponding to the
empty set is $\langle F \rangle$ itself. If $a \in \Skel(K)$, then
$\langle L_a \rangle \supseteq K$ since $\Skel(L_a) = \{a\}
\subseteq \Skel(K)$. Thus, the maximality of $K$ implies that $K =
\langle L_a \rangle$.
\end{proof}

\subsection{The Zariski correspondence for $\nu$-kernels}\label{Zarcor}$ $

\begin{defn}
A \textbf{kernel root} of $ f  \in \FunF$  is an element $\bfa \in
F^{(n)}$ such that $ f (\bfa) \nucong \fone.$

For $S \subseteq  F(\Lambda)$, define
\begin{equation}
\Skel(S) = \{ \bfa \in F^{(n)}  \ : \  f(\bfa) \nucong 1, \ \forall
f \in S \}.
\end{equation}
\end{defn}

We write  $\Skel(f)$ for $\Skel(\{f\}).$

%
%
%

\begin{defn}
A subset $Z\subset F^{(n)}$ is said to be a \textbf{$\onenu$-set} if
there exists a subset $S \subset F(\Lambda)$  such that $Z =
\Skel(S)$.
\end{defn}

\subsection{The coordinate \vsemifield0 of a $\onenu$-set}$ $

\begin{defn}\label{def_coord_semifield_1} For $X \subset F^{(n)},$ The \textbf{coordinate
\vsemifield0} $F(X)$ of a $\onenu$-set $X$ is the set of restriction
of the rational functions $F(\Lambda)$ to $X$.

$$\phi_{X}~:~  F(\Lambda)~\rightarrow~F(X)$$
denotes the restriction map  $h \mapsto h|_{X}$.
\end{defn}

The tropical significance comes from:

\begin{thm}\label{cor11}\cite[Theorem~7.1.7]{Kern} The correspondences $f \mapsto \hat f$ and $h \mapsto
\underline{h}$ induces a 1:1 correspondence between corner
hypersurfaces and $\onenu$-sets of corner internal rational
functions.
\end{thm}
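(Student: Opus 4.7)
The plan is to exhibit the two maps $f \mapsto \hat f$ and $h \mapsto \underline h$ explicitly and verify that they are mutually inverse. First I would unpack the definitions: a corner internal rational function $f \in F(\Lambda)$ is one whose numerator and denominator play symmetric supertropical roles, so that the condition $f \nucong 1$ captures its corner locus on the tangible side. Then $f \mapsto \hat f := \Skel(f) = \{ \bfa \in F^{(n)} : f(\bfa) \nucong 1 \}$ automatically lands in $\onenu$-sets, and $h \mapsto \underline h$ associates to a corner hypersurface $h$ a canonical principal generator of the $\nu$-kernel of rational functions sending every point of $h$ to a ghost.

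Next I would check that $\underline h$ is well-defined. Given the underlying $\onenu$-set of $h$, the associated ``vanishing'' $\nu$-kernel $I(h) = \{ f \in F(\Lambda) : f(\bfa) \nucong 1 \ \forall \bfa \in h\}$ is a $\nu$-kernel by Remark~\ref{bfacts1}(v). Working inside the finitely generated \vsemifield0 $F(\Lambda)$ of Corollary~\ref{prop_frac_semifield_finitely_gen}, Theorem~\ref{thm_semi_with_a_gen} supplies a single generator $a$ for $I(h)$, and the corner internal representative is $|a| = a + a^{-1}$. The corner internal hypothesis is exactly what ensures this representative is unique within the equivalence class cut out by $h$.

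For the two compositions, I would argue as follows. The round trip $f \mapsto \hat f \mapsto \underline{\hat f}$ returns the canonical corner internal representative of the equivalence class of $f$, which is $f$ itself by hypothesis. The round trip $h \mapsto \underline h \mapsto \widehat{\underline h}$ recovers the underlying $\onenu$-set directly from the construction of $I(h)$. Both identifications may be read off pointwise using $\nu$-convexity of $\nu$-kernels (Remark~\ref{bfacts1}(ii)--(iii)), which forces every element caught between an element and its inverse at each point of $h$ into the same principal $\nu$-kernel $\langle |a| \rangle$.

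The main obstacle will be injectivity: distinct corner internal rational functions must produce distinct $\onenu$-sets, a supertropical Nullstellensatz statement. The natural route is to reduce, via Proposition~\ref{prop_maximal_kernels_in_semifield_of_fractions_part1} together with the $\nu$-archimedean hypothesis on $F$, to the assertion that every maximal kernel of $F(\Lambda)$ has the form $\langle \la_1/\gamma_1, \dots, \la_n/\gamma_n \rangle$ for a unique tangible point $(\gamma_1,\dots,\gamma_n)$. Given this, two corner internal rational functions with the same $\onenu$-set lie in the intersection of the same family of maximal kernels; the archimedean hypothesis then forces this intersection (modulo the corner internal equivalence) to be trivial, exactly as in the classical Hilbert Nullstellensatz.
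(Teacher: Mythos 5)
This statement is recalled verbatim from \cite[Theorem~7.1.7]{Kern} and the present paper gives no proof of it, so there is nothing internal to compare against. Evaluating your attempt on its own terms, however, reveals that it is built on definitions that do not match the ones the theorem actually uses.

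The theorem asserts a bijection between two \emph{geometric} objects: the \emph{corner hypersurface} $\hat f$ of a tropical polynomial (the locus where the dominating monomial value is attained at least twice, i.e., the non-smooth locus), and the $\onenu$-set $\Skel(\underline h)$ of a \emph{corner internal} rational function $\underline h$ (the locus where evaluation is ghost). Your proposal silently identifies $\hat f$ with $\Skel(f)$ in the first line, which collapses exactly the distinction the theorem is about; from there the whole argument is addressing a different statement. Moreover, the map $h \mapsto \underline h$ in the cited theorem goes from a corner hypersurface to a rational function (essentially, to the rational function whose monomials tie precisely on $h$), whereas you redefine $\underline h$ as a principal generator of the vanishing $\nu$-kernel $I(h)$, invoking Theorem~\ref{thm_semi_with_a_gen} for its existence. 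That construction belongs to the algebraic Zariski-type correspondence of \Sref{Zarcor}, not to the corner-locus correspondence asserted here. Your closing ``Nullstellensatz'' paragraph compounds the mismatch: Proposition~\ref{prop_maximal_kernels_in_semifield_of_fractions_part1} describes maximal kernels, but the theorem is not a statement about kernels at all, and injectivity of $f\mapsto\hat f$ must be shown on the level of the corner loci, not of associated ideals/kernels. To make progress here you would need to start from the actual definitions of ``corner hypersurface'' and ``corner internal'' in \cite{Kern} (Chapter~7), and the heart of the argument is a combinatorial comparison of where two monomials tie with where the supertropical sum becomes ghost, using the structure in Construction~\ref{HO_construction}; none of that appears in the proposal.
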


\begin{prop}\label{prop_coord_semifield_100}\cite[Proposition~5.4.2]{Kern}
$\phi_{X}$ is an onto \semifield0 homomorphism.
\end{prop}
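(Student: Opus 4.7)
The plan is to unpack the definitions and observe that the statement is essentially forced by the pointwise nature of the operations on $F(X)$. First I would note that, because the elements of $F(X)$ are (restrictions of) $F$-valued functions on $X$, the \vsemifield0 structure on $F(X)$ is inherited pointwise from $F$: for $g_1, g_2 \in F(X)$ and $\bfa \in X$, sums, products, and the ghost map are given by
\[
(g_1 + g_2)(\bfa) = g_1(\bfa) + g_2(\bfa), \quad (g_1 g_2)(\bfa) = g_1(\bfa) g_2(\bfa), \quad \nu(g_1)(\bfa) = \nu(g_1(\bfa)),
\]
and the multiplicative identity is the constant function $\fone$.

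Next I would verify that $F(X)$ is closed under these operations, so that the structure is well-defined. If $h_1, h_2 \in F(\Lambda)$, then $h_1 + h_2, h_1 h_2 \in F(\Lambda)$ and $h_1^{-1} \in F(\Lambda)$ (since $F(\Lambda)$ is a \vsemifield0), and the corresponding restrictions agree pointwise with the pointwise sum, product, and inverse of $h_1|_X, h_2|_X$. This both shows closure and directly yields the homomorphism property:
\[
\phi_X(h_1 + h_2) = (h_1 + h_2)|_X = h_1|_X + h_2|_X = \phi_X(h_1) + \phi_X(h_2),
\]
and analogously for multiplication, inversion, and $\nu$. In other words, $\phi_X$ factors through the family of evaluation maps $\{\operatorname{ev}_\bfa : F(\Lambda) \to F\}_{\bfa \in X}$, each of which is a \vsemifield0 homomorphism.

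Finally, surjectivity is immediate from Definition~\ref{def_coord_semifield_1}: by construction every element of $F(X)$ is of the form $h|_X$ for some $h \in F(\Lambda)$, so $\phi_X$ is onto.

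There is essentially no hard step here; the result is definitional once one accepts that the operations on $F(X)$ are computed pointwise. The only mild subtlety is that $\phi_X$ is far from injective in general (two distinct rational functions may coincide on $X$), but this causes no difficulty for the homomorphism or surjectivity claims, and indeed becomes the subject of later study via $\ker \phi_X$.
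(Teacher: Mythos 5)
Your proof is correct and is exactly the expected definitional unwinding. Note that the paper itself supplies no proof here---it cites \cite[Proposition~5.4.2]{Kern}---so there is nothing in this document to compare against, but your argument (pointwise operations on $F(X)$, restriction commuting with them, surjectivity by the very definition of $F(X)$ as the set of restrictions) is the standard and essentially the only reasonable approach.
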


\begin{prop}\label{prop_coord_semifield_1}\cite[Proposition~5.4.3]{Kern}
$F(X)$ is a \vdomain0, isomorphic to $F(\Lambda)/\Ker(X).$
\end{prop}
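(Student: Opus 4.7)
The plan is to apply the First Isomorphism Theorem to the restriction map $\phi_X : F(\Lambda) \to F(X)$. Proposition~\ref{prop_coord_semifield_100} already supplies the onto \semifield0 homomorphism, so only two tasks remain: identifying $\ker \phi_X$ with $\Ker(X)$, and verifying that $F(X)$ (equivalently, the quotient) carries the structure of a \vdomain0.

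First I would unwind definitions. In the $\nu$-setting, $\phi_X^{-1}(\fone) = \{\,h \in F(\Lambda) : h(\bfa) \nucong \fone \ \forall\, \bfa \in X\,\}$, which is precisely $\Ker(X)$. By Theorem~\ref{latticecor}(2), this is a $\nu$-kernel of $F(\Lambda)$, and the usual surjection-plus-kernel argument (the \semifield0 version baked into Theorem~\ref{thm_nother_1_and_3}, together with Proposition~\ref{basicprop} for the congruence/kernel correspondence) yields a bijective \semifield0 homomorphism $F(\Lambda)/\Ker(X) \to F(X)$ sending $h\,\Ker(X) \mapsto h|_X$. To equip $F(X)$ with \vdomain0 data (Definition~\ref{vdom}), I would transport through $\phi_X$: set $\tT_{F(X)} := \phi_X(\tT_{F(\Lambda)})$, $\tG_{F(X)} := \phi_X(\tG_{F(\Lambda)})$, and $\nu_X(h|_X) := \nu(h)|_X$. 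The two identities defining a \vdomain0, together with idempotence of $\nu_X$, are pointwise statements that descend from $F(\Lambda)$ to its restriction to $X$, and cancellativity of $\tT_{F(X)}$ follows from the fact that a tangible rational function evaluated at a tangible tuple is tangible, combined with the group structure of $\tT_{F(\Lambda)}$ coming from Lemma~\ref{vsemi20}.

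The main obstacle will be the well-definedness of $\nu_X$: one must show $h_1|_X = h_2|_X \Rightarrow \nu(h_1)|_X \nucong \nu(h_2)|_X$, equivalently $h_1 h_2^* \in \Ker(X) \Rightarrow \nu(h_1 h_2^*) \in \Ker(X)$. This is exactly what it means for $\Ker(X)$ to be a $\nu$-kernel in the sense of Definitions~\ref{vsemi24} and~\ref{vsemi29} (see also Remark~\ref{bfacts1}(i)), and it reduces to the observation that the $\nu$-equivalence relation on $F(\Lambda)$ is itself defined pointwise, hence stable under restriction to $X$. Once this check is complete, the remaining \vdomain0 axioms transfer routinely through the surjection $\phi_X$.
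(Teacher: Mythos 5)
The paper does not supply a proof here; it simply cites Proposition~5.4.3 of \cite{Kern}, so there is no in-paper argument to compare against. That said, your reconstruction via the first isomorphism theorem applied to $\phi_X$ (with Proposition~\ref{prop_coord_semifield_100} supplying surjectivity) is the natural route and is broadly correct.

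One point deserves more care than you give it. You assert $\phi_X^{-1}(\fone) = \{h \in F(\Lambda) : h(\bfa) \nucong \fone \ \forall\, \bfa \in X\} = \Ker(X)$ and then immediately invoke the surjection-plus-kernel argument. But as stated, $\phi_X^{-1}(\fone) = \{h : h|_X = \fone\}$ means $h(\bfa)$ is \emph{exactly} $\fone$ at every point of $X$, whereas $\Ker(X)$ (the $\nu$-kernel associated to $X$ via the Zariski correspondence of \S\ref{Zarcor}) is built from the weaker condition $h(\bfa) \nucong \fone$. These sets are literally different, and the injectivity of the induced map $F(\Lambda)/\Ker(X) \to F(X)$ requires that equality in $F(X)$ be declared up to $\nu$-equivalence (or equivalently that the $\nu$-congruence determined by $\Ker(X)$ via Remark~\ref{bfacts1}(i) is precisely what kills the fibers of $\phi_X$). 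This is exactly where the definition of $F(X)$ from \cite{Kern} must be brought in; without it, the passage from the semifield kernel $\phi_X^{-1}(\fone)$ to the $\nu$-kernel $\Ker(X)$ is a silent conflation. Your later discussion of the well-definedness of $\nu_X$ touches the right mechanism (the $\nu$-kernel/$\nu$-congruence correspondence), but it should be deployed \emph{before} the isomorphism is claimed, not only afterwards to carry the $\nu$-structure. Once that identification is made explicit, the rest of your argument — transporting $\tT$, $\tG$, and $\nu$ through the surjection, and checking the \vdomain0 axioms pointwise — is routine and correct.
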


Thus, chains of kernels of $F(\Lambda)$ give us an algebraic
tropical notion of dimension, and its determination is the subject
of this paper.

\subsection{The Jordan–-H\"{o}lder theorem}$ $

Our main goal is to find a Jordan–-H\"{o}lder theorem for kernels.
But there are too many kernels for a viable theory in general, as
discussed in \cite{Bi}. If we limit our set of kernels to a
sublattice of kernels, one can use the Schreier refinement theorem
\cite{Rot} to obtain a version of the Jordan--H\"{o}lder Theorem.

\begin{defn}\label{natural}
 $\mathcal L(\mathcal S)$ denotes the lattice
of $\nu$-kernels of a \vsemifield0 $\mathcal S.$

$\Theta$ is a \textbf{natural map} if for each \vsemifield0
$\mathcal S,$ there is a lattice homomorphism $\Theta_{\mathcal S}:
\mathcal L(\mathcal S)\to \mathcal L(\mathcal S) $ such that $K
\mapsto \Theta_{\mathcal S}(K)$ is a homomorphism of kernels. We
write $\Theta(\mathcal S) $ for $\Theta_{\mathcal S}(\mathcal
L(S)),$ and call the kernels in $\Theta_{\mathcal S}$
$\Theta$-\textbf{kernels}. (We delete $\mathcal S$ when it is
unambiguous.)

 A $\Theta(\mathcal S)$-\textbf{simple} kernel is a minimal
$\Theta$-kernel $\ne \{1 \}$.  A $\Theta(\mathcal
S)$-\textbf{composition series}
 $\mathcal C(K,L)$ in $\Theta(\mathcal S)$ from  a kernel $K$ to a subkernel $L$ is
  a chain $$K = K_0 \supset K_1 \supset \dots K_t = L$$  in $\Theta$
such that each factor is $\Theta$-simple. \end{defn}

By Theorem~\ref{latticecor} $\mathcal C(K,L)$ is equivalent to the
$\Theta(\mathcal S/L)$-composition series
$$K/L \supset K_1/L \supset \dots \supset K_t/L = 0$$ of $K/L.$

Given a $\Theta$-kernel $K$, we define its {\bf composition length}
$\ell (K)$ to be the length of a $\Theta$-composition series for $K$
(presuming $K$ has one). By definition, $\{ 1\}$ is the only
$\Theta$-kernel of composition length 0. A nonzero $\Theta$-kernel
$K$ is simple iff $\ell(K) = 1.$ The next theorem is a standard
lattice-theoretic result of Schreier and Zassenhaus, yielding the
Jordan-H\"{o}lder Theorem, cf.~\cite [Theorem~3.11,
Schreier-Jordan-H\"{o}lder Theorem]{Ro}.

\begin{thm}\label{Schreier}  Suppose $K$ has a
composition series  $$K = K_0 \supset K_1 \supset \dots \supset K_t
= 0,$$  which we denote as $\mathcal C$. Then:

(i) Any arbitrary finite chain of subkernels $$K = N_0 \supset N_1
\supset \dots \supset N_k\supset 0$$ (denoted as $\mathcal D$),  can
be refined to a composition series equivalent to $\mathcal C$. In
particular, $k\le t$.

(ii)  Any two composition series of $K$ are equivalent.

(iii) $\ell (K) = \ell(N) + \ell (K/N)$ for every subkernel $N$ of
$K$. In particular, every subkernel and every homomorphic image of a
kernel with composition series has a composition series.
\end{thm}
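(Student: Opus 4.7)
The plan is to reduce the statement to the classical Schreier--Zassenhaus machinery, checking only that it transfers intact to the setting of $\Theta$-kernels. The key observation is that, by the very definition of a natural map, $\Theta(\mathcal{S})$ is closed under intersection and product inside $\mathcal{L}(\mathcal{S})$, and the three isomorphism theorems of Theorem~\ref{thm_nother_1_and_3} restrict to $\Theta(\mathcal{S})$. Since the lattice of $\nu$-kernels of a \vsemifield0\ is modular (kernels are, in particular, subgroups of an abelian group, and convexity is preserved under the relevant joins and meets by Remark~\ref{bfacts1}(iv)--(v)), the Dedekind modular law holds wherever we need it.

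My first step would be to establish the Zassenhaus butterfly lemma for $\Theta$-kernels: whenever $A' \subseteq A$ and $B' \subseteq B$ are $\Theta$-kernels,
$$ A'(A \cap B) \big/ A'(A \cap B') \;\cong\; (A \cap B) \big/ ((A' \cap B)(A \cap B')) \;\cong\; B'(A \cap B) \big/ B'(A' \cap B). $$
Both isomorphisms follow from two applications of Theorem~\ref{thm_nother_1_and_3}(2) with $\mathcal{U} = A \cap B$, after identifying $(A' \cap B)(A \cap B')$ with $(A \cap B) \cap A'(A \cap B')$ by modularity. Every kernel appearing here is a $\Theta$-kernel because $\Theta(\mathcal{S})$ is a sublattice.

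Schreier refinement then proceeds by the standard double insertion. Given the composition series $\mathcal{C}: K = K_0 \supset K_1 \supset \dots \supset K_t = 0$ and any chain $\mathcal{D}: K = N_0 \supset N_1 \supset \dots \supset N_k \supset 0$ in $\Theta(\mathcal{S})$, I insert between consecutive terms of $\mathcal{C}$ the $\Theta$-kernels $K_{i+1}(K_i \cap N_j)$ for $0 \le j \le k$, and refine $\mathcal{D}$ symmetrically. The butterfly lemma pairs successive factors of the two refinements and shows they are equivalent. Part~(i) follows because $\Theta$-simplicity of the factors $K_i/K_{i+1}$ forces the refinement of $\mathcal{C}$ to collapse to $\mathcal{C}$ itself, hence the equivalent refinement of $\mathcal{D}$ is already a composition series, containing $\mathcal{D}$ and equivalent to $\mathcal{C}$; in particular $k \le t$. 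Part~(ii) is the case of (i) in which $\mathcal{D}$ is itself a composition series, so no collapse is needed on the $\mathcal{D}$ side either. For (iii), given a subkernel $N$ of $K$, the lattice correspondence in Theorem~\ref{thm_nother_1_and_3}(3) pulls back any composition series of $K/N$ to a chain of $\Theta$-kernels from $K$ down to $N$ whose successive factors are $\Theta$-simple; concatenating with a composition series of $N$ gives a composition series of $K$ of length $\ell(N) + \ell(K/N)$, and (ii) identifies this length with $\ell(K)$.

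The only non-routine point is ensuring that every intermediate kernel manufactured by the butterfly and refinement constructions is again a $\Theta$-kernel, and that $\Theta$ is compatible with passage to $\mathcal{S}/L$ in Theorem~\ref{thm_nother_1_and_3}(3) so that composition series descend and lift correctly. Both facts are built into the hypothesis that $\Theta$ is a lattice homomorphism on $\mathcal{L}(-)$ natural in the \vsemifield0, and once they are recorded the proof is a direct import of the classical Schreier--Zassenhaus argument such as the one in \cite[Theorem~3.11]{Ro}.
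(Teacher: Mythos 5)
Your approach is essentially the same as the paper's: the paper does not prove this theorem but merely cites it as a standard lattice-theoretic consequence of Schreier--Zassenhaus (cf.\ \cite[Theorem~3.11]{Ro}), and your proposal reconstructs exactly that classical argument, verified to live inside the sublattice $\Theta(\mathcal S)$. The butterfly lemma, the double-insertion refinement, the modularity of the kernel lattice (as a sublattice of the subgroup lattice of an abelian group, with join $=$ product by Remark~\ref{bfacts1}(iv)), and the collapse of the refined $\mathcal C$ back to $\mathcal C$ by $\Theta$-simplicity are all correct.

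One small logical inversion in your argument for (iii): you \emph{assume} that $N$ and $K/N$ already possess composition series in order to concatenate them and invoke (ii), but the ``in particular'' clause is precisely the assertion that they do. The standard repair, which you already have the tools for, is to run part~(i) first on the two-step chain $K \supset N \supset 0$: its refinement to a composition series equivalent to $\mathcal C$ splits at $N$ into a composition series of $N$ (the lower segment) and, via the lattice correspondence of Theorem~\ref{thm_nother_1_and_3}(3), a composition series of $K/N$ (the upper segment). This simultaneously establishes existence and the length formula $\ell(K) = \ell(N) + \ell(K/N)$, with no circularity.
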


\subsection{The HO-decomposition}\label{HOdecomp}$ $

\begin{defn} For any kernel $K$ of $F(\Lambda)$, define
the equivalence relation

\begin{equation}
f \sim_K f'\quad \text{if and only if} \quad \langle f \rangle
\cap K = \langle f'\rangle  \cap K
\end{equation}as kernels of $F(\Lambda)$.   The equivalence classes   are
$$[f] = \{ f' \ : \ f' \ \text{is a generator of} \ \langle f
\rangle \cap  K \}.$$ 
\end{defn}

Our interest is in $K = {\FF}.$
\begin{defn}
An $\scrL$-\textbf{monomial} is a non-constant Laurent monomial $f
\in F(\la_1,...,\la_n)$; i.e.,  $f = \frac{h}{g}$ with $h,g \in
F[\la_1,...,\la_n]$ non-proportional monomials.

%
%

A rational function  $f \in F(\la_1,...,\la_n)$ is called a
\textbf{hyperspace-fraction}, or HS-fraction, if $f \sim_{\FF}
\sum_{i=1}^{t} |f_i|$ where  the  $f_i $ are non-proportional
$\scrL$-monomials.
\end{defn}

\begin{defn}
A $\onenu$-set  in $F^{(n)}$ is  a \textbf{hyperplane $\onenu$-set
} (HP-$\onenu$-set for short) if it is defined by an
$\scrL$-monomial. A $\onenu$-set  in $F^{(n)}$ is   a
\textbf{hyperspace-fraction $\onenu$-set } (HS-$\onenu$-set for
short) if it is defined by an HS-fraction.
\end{defn}

\begin{prop} \cite[Corollary~9.1.10]{Kern}
A $\onenu$-set  is an HS-$\onenu$-set  if and only if it is an
intersection of HP-$\onenu$-sets.
\end{prop}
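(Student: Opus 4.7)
The plan is to prove both implications by reducing any HS-fraction to its concrete representative $g = \sum_{i=1}^t |f_i|$, and then establishing the single key identity $\Skel(g) = \bigcap_{i=1}^t \Skel(f_i)$. This one computation essentially handles both directions.

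For the forward direction, suppose $Z$ is an HS-$\onenu$-set, so $Z = \Skel(h)$ for some HS-fraction $h$ with $h \sim_{\FF} g := \sum_{i=1}^t |f_i|$, the $f_i$ non-proportional $\scrL$-monomials. First I would argue that $\Skel(h) = \Skel(g)$, invoking the fact (implicit in the role of $\FF$, which by Corollary~\ref{gen5} is generated by an arbitrary $\alpha \in F \setminus \{1\}$) that $\sim_{\FF}$-equivalence preserves the associated $\onenu$-set: passing between elements of the same $\sim_{\FF}$-class amounts to rescaling by elements whose $\onenu$-set is all of $F^{(n)}$. Then comes the core computation: each $|f_i| = f_i + f_i^{-1}$ is positive, so $|f_i|(\bfa) \ge_\nu 1$ for every $\bfa \in F^{(n)}$, with equality iff $f_i(\bfa) \nucong 1$. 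Because supertropical addition yields a sum whose $\nu$-value is the maximum, $g(\bfa) \nucong 1$ forces the max $|f_i|(\bfa)^\nu$ to equal $1$, hence every $|f_i|(\bfa) \nucong 1$, hence every $f_i(\bfa) \nucong 1$. Thus $Z = \Skel(g) = \bigcap_{i=1}^t \Skel(f_i)$, a finite intersection of HP-$\onenu$-sets.

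For the reverse direction, suppose $Z = \bigcap_{i=1}^t \Skel(f_i)$ is an intersection of HP-$\onenu$-sets with each $f_i$ an $\scrL$-monomial. Set $g = \sum_{i=1}^t |f_i|$. Then $g$ is, tautologically, $\sim_{\FF}$-equivalent to itself in the required form, so $g$ is an HS-fraction, and by the same computation $\Skel(g) = \bigcap_{i=1}^t \Skel(f_i) = Z$. Hence $Z$ is an HS-$\onenu$-set.

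The main obstacle is the step $\Skel(h) = \Skel(g)$ when $h \sim_{\FF} g$; everything else is a direct supertropical calculation using positivity of $|f_i|$ and the ``max-wins'' law for addition. I would isolate this as an auxiliary observation, deducing it either from a result already in \cite{Kern} (the Zariski-type correspondence of \S\ref{Zarcor} identifying $\Skel$ with a quotient-invariant) or by a short direct argument: if $h \sim_{\FF} g$, then $\langle h \rangle$ and $\langle g \rangle$ differ only by multiplication by elements of $\FF$, which by Proposition~\ref{prop_principal_ker} applied to the generator $\alpha \in F$ are bounded by powers of a constant, and therefore evaluate to elements $\nu$-equivalent to $1$ on all of $F^{(n)}$. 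A secondary concern is whether ``intersection'' is intended as finite or arbitrary; the finite case matches the finite-sum definition of an HS-fraction and the argument above is complete, while the arbitrary case would additionally require the Noetherian-type property that any $\onenu$-set defined by a family of HP-$\onenu$-sets is already cut out by finitely many of them.
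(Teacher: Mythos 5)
The paper supplies no proof here: it simply cites \cite[Corollary~9.1.10]{Kern}. So I can only assess your argument on its own merits, and in substance it is correct. The core computation $\Skel\bigl(\sum_{i=1}^t|f_i|\bigr)=\bigcap_{i=1}^t\Skel(f_i)$, using positivity of $|f_i|$ and the max-wins rule, is exactly right; both directions fall out of it. Your auxiliary claim that $\Skel(h)=\Skel(g)$ whenever $h\sim_{\FF}g$ is also correct, and can be pinned down more cleanly than the ``rescaling'' heuristic you sketch: since $\Skel(\langle F\rangle)=\emptyset$ (evaluate the generator $\alpha\ne1$) and $\Skel(K_1\cap K_2)=\Skel(K_1)\cup\Skel(K_2)$ (because $K_1\cap K_2=\langle|f_1|\wedge|f_2|\rangle$ and the minimum of two positive functions is $\nucong1$ iff at least one is), one gets $\Skel(h)=\Skel(\langle h\rangle\cap\langle F\rangle)=\Skel(\langle g\rangle\cap\langle F\rangle)=\Skel(g)$ directly from the defining equality of kernels.

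One gap worth flagging in the reverse direction: the definition of HS-fraction requires the $\scrL$-monomials $f_i$ to be non-proportional, but you write ``tautologically'' as if $\sum_i|f_i|$ is automatically in the right form. If among the defining $\scrL$-monomials some are proportional with distinct constants (say $f_j=\alpha f_i$, $\alpha\ne1$), then $\Skel(f_i)\cap\Skel(f_j)=\emptyset$ and the whole intersection is empty, yet $\sum_i|f_i|$ fails the non-proportionality clause, so you have not exhibited an HS-fraction cutting out $Z$. You should either argue that proportional $f_i$ can be discarded up to this degeneracy, or treat the empty intersection separately (or observe that the proposition is tacitly restricted to irredundant/nondegenerate intersections). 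Your remark about finite versus arbitrary intersections is appropriate; for arbitrary intersections one would indeed need a finiteness reduction not supplied by this argument.
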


%

 $\PCon(K )$ denotes the lattice
of principal subkernels  of a kernel $K.$

\begin{defn}
$ \operatorname{HP}(K)$  denotes the family of $\scrL$-monomials in
a kernel $K$.
 A \textbf{hyperplane kernel}, or  \textbf{HP-kernel},
for short, is a principal kernel of $F(\la_1,...,\la_n)$ generated
by an $\scrL$-monomial.

A \textbf{hyperspace-fraction kernel}, or  \textbf{HS-kernel}, for
short, is a principal kernel of $F(\la_1,...,\la_n)$ generated by
a hyperspace fraction.

\end{defn}

\begin{defn}\label{Omega}
 $\Omega(F(\Lambda))$ is the lattice of
kernels finitely generated by  HP-kernels of $F(\Lambda)$, i.e.,
every element $\langle f \rangle \in \Omega(F(\Lambda))$ is obtained
via finite intersections and products of HP-kernels.
\end{defn}

\begin{prop}\label{remHSprodHP}\cite[Proposition (9.1.7)]{Kern}
Any principal HS-kernel is a product of distinct HP-kernels, and
thus is in $\Omega(F(\Lambda))$.
\end{prop}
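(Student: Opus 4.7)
The plan is to reduce the statement to a direct kernel–product identity. Let $\langle f \rangle$ be a principal HS-kernel. By definition of an HS-fraction there exist non-proportional $\scrL$-monomials $f_1, \ldots, f_t$ with $f \sim_{\FF} \sum_{i=1}^{t} |f_i|$. I would first argue (or invoke the paper's convention) that $\langle f \rangle$ coincides with $\langle \sum_{i=1}^{t} |f_i| \rangle$, reducing the claim to establishing the identity
\[
\Bigl\langle \sum_{i=1}^{t} |f_i| \Bigr\rangle \;=\; \prod_{i=1}^{t} \langle f_i \rangle.
\]

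Next I would apply Remark~\ref{bfacts1}(vi) with $s_i = |f_i|$, noting that $|\,|f_i|\,| = |f_i|$, to conclude $\langle \sum_i |f_i| \rangle = \langle |f_1|, \ldots, |f_t| \rangle$. Iterating Remark~\ref{bfacts}(iii), the product $\prod_i \langle |f_i| \rangle$ is the smallest kernel containing $\bigcup_i \langle |f_i|\rangle$, i.e., $\langle |f_1|, \ldots, |f_t|\rangle$. Finally, by Remark~\ref{bfacts1}(iii) together with closure of a kernel under inverses, $|f_i| \in K$ iff $f_i \in K$, hence $\langle |f_i| \rangle = \langle f_i \rangle$, which is an HP-kernel by definition. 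Chaining these equalities produces the desired decomposition.

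For distinctness of the HP-factors $\langle f_i \rangle$, I would invoke Proposition~\ref{prop_principal_ker}: an equality $\langle f_i \rangle = \langle f_j \rangle$ would force $|f_i|^{-n} \le |f_j| \le |f_i|^n$ for some $n$, and symmetrically with $i,j$ swapped. Since each $f_k$ is an $\scrL$-monomial $h_k/g_k$ with $h_k, g_k$ tangible monomials, these two-sided polynomial bounds between $|f_i|$ and $|f_j|$ force proportionality of the underlying monomials, contradicting the standing non-proportionality. Membership of the resulting product in $\Omega(F(\Lambda))$ is then immediate from Definition~\ref{Omega}.

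The step I expect to be the main obstacle is the very first reduction: justifying that $\langle f\rangle$ and $\langle \sum |f_i|\rangle$ coincide \emph{on the nose} as kernels of $F(\Lambda)$, given that $\sim_{\FF}$ only guarantees equality after intersecting with $\FF$. Once this is handled (either by a separate lemma relating principal kernels to their $\FF$-equivalence class, or by taking $\sum_i|f_i|$ as the canonical representative of an HS-fraction), the remainder of the argument is a clean application of the generation and product lemmas recalled earlier in the paper.
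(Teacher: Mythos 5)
Your central chain $\langle \sum_i |f_i|\rangle = \langle\, |f_1|,\dots,|f_t|\,\rangle = \prod_i \langle |f_i|\rangle = \prod_i\langle f_i\rangle$ is correct and uses exactly the right tools (Remark~\ref{bfacts1}(vi), Remark~\ref{bfacts}(iii)--(iv), and the fact that $|g|\in K \Leftrightarrow g\in K$). The paper itself gives no argument here — it cites \cite{Kern} — so there is no in-text proof to compare against, but this is clearly the intended route. Two remarks are in order, though.

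First, your distinctness argument does not work as stated. Non-proportionality of the $\scrL$-monomials $f_i$ does \emph{not} imply that the HP-kernels $\langle f_i\rangle$ are distinct: take $f_1 = \la_1$ and $f_2 = \la_1^2$, which are non-proportional, yet $|\la_1|^{-2}\leq |\la_1^2|\leq |\la_1|^2$ and $|\la_1^2|^{-1}\leq |\la_1|\leq|\la_1^2|$, so $\langle\la_1\rangle=\langle\la_1^2\rangle$. Proposition~\ref{prop_principal_ker} only yields a two-sided power bound, and Lemma~\ref{prop_HP_element_is_a_generator} only yields $w^s = f^k$; neither forces proportionality. Fortunately the distinctness clause in the proposition costs nothing: kernels are idempotent under the product ($K\cdot K = K$ since $K$ is a group), so one simply discards repeated factors from $\prod_i\langle f_i\rangle$. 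No appeal to proportionality is needed, and this also avoids relying on Lemma~\ref{prop_HP_element_is_a_generator}, whose divisibility hypothesis is not yet in force at the place this proposition appears.

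Second, you are right to flag the $\sim_{\FF}$ issue: if the HS-fraction $f$ is only $\FF$-equivalent to $\sum_i|f_i|$, then in general $\langle f\rangle$ and $\langle\sum_i|f_i|\rangle$ agree only after intersecting with $\langle F\rangle$; e.g.\ $f = \big(\sum_i|f_i|\big)\wedge|\alpha|$ gives a strictly smaller kernel. The paper's later usage (Remark~\ref{rem_HS_kernel_finitely_spanned} treats an HS-kernel as generated by $\sum_i|f_i|$ on the nose) indicates that the intended reading of ``HS-kernel'' takes $\sum_i|f_i|$ as the canonical generator, which is exactly the convention you propose; under that reading your argument closes. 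Had you instead tried to prove the exact equality $\langle f\rangle=\langle\sum_i|f_i|\rangle$ from $f\sim_\FF\sum_i|f_i|$ alone, that step would genuinely fail.
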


\begin{lem}\label{prop_HP_element_is_a_generator}\cite[Lema~9.1.11]{Kern}
Let $\langle f \rangle$ be an HP-kernel, with $F$ divisible. If $w
\in \langle f \rangle$ is an $\scrL$-monomial, then $w^s = f^k$ for
some $s,k \in \mathbb{Z} \setminus\{ 0 \}$.
\end{lem}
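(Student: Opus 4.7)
The plan is to pin down both the exponent vector and the coefficient of $w$ relative to $f$ by squeezing $w$ between powers of $|f|$. Write the $\scrL$-monomials as $f=c\lambda^{\mathbf{m}}$ and $w=d\lambda^{\mathbf{k}}$ with $c,d\in F$ and nonzero exponent vectors $\mathbf{m},\mathbf{k}\in\mathbb{Z}^n$. By Corollary~\ref{cor_principal_ker_by_order} applied to $w\in\langle f\rangle$, there exists $N\in\mathbb{N}$ with $|f|^{-N}\le w\le|f|^N$ in $F(\Lambda)$.

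The first key step is to show $\mathbf{k}\in\mathbb{Q}\mathbf{m}$. Evaluating the sandwich at tangible tuples $\bfa\in\tT^{(n)}$ gives $|c\bfa^{\mathbf{m}}|^{-N}\le d\bfa^{\mathbf{k}}\le|c\bfa^{\mathbf{m}}|^N$ in $F$. Passing to the logarithmic picture of the divisible lattice-ordered group $\tT$, this translates to $|\mathbf{k}\cdot\mathbf{x}+\log d|\le N\,|\mathbf{m}\cdot\mathbf{x}+\log c|$ for $\mathbf{x}$ in a divisible subgroup. Restricting to the affine hyperplane $\mathbf{m}\cdot\mathbf{x}=-\log c$, which meets $\log\tT^{(n)}$ by divisibility of $F$, makes the right-hand side vanish and forces the linear functional $\mathbf{k}\cdot\mathbf{x}$ to stay bounded on an affine codimension-$1$ set---impossible unless $\mathbf{k}$ is rationally proportional to $\mathbf{m}$. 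Equivalently, via \S\ref{Zarcor}: $w\in\langle f\rangle$ gives $\Skel(f)\subseteq\Skel(w)$, and the coset description of these $\onenu$-sets inside $\tT^{(n)}$ forces $\ker(\cdot^{\mathbf{m}})\subseteq\ker(\cdot^{\mathbf{k}})$. Either way, $q\mathbf{k}=p\mathbf{m}$ for some $p,q\in\mathbb{Z}$ with $q>0$.

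With exponents aligned, $w^q=(d^qc^{-p})f^p=\mu f^p$, where $\mu:=d^qc^{-p}=w^qf^{-p}\in\langle f\rangle\cap F$. Applying Corollary~\ref{cor_principal_ker_by_order} once more yields $|f|^{-N'}\le\mu\le|f|^{N'}$; specialising at some $\bfa_0\in\tT^{(n)}$ with $f(\bfa_0)\nucong 1$---such a point exists because $\mathbf{m}\ne 0$ and $F$ is divisible---collapses both bounds to $1$, whence $\mu\nucong 1$. A final power, made available by $\nu$-archimedeanness and divisibility of $F$, upgrades this to $\mu^r=1$ for some $r\ge 1$, giving $w^{rq}=f^{rp}$, and we may take $s:=rq$, $k:=rp$, both nonzero.

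The main obstacle I foresee is this last step: promoting $\mu\nucong 1$ to an exact equation in $F$. The proportionality of exponents is a reasonably standard tropical/toric argument once the sandwich inequality is unpacked, but removing the coefficient $\mu$ cleanly requires exploiting both divisibility (to produce kernel roots of $f$ and to extract roots inside $F$) and $\nu$-archimedeanness (to rule out pathological $\nu$-equivalences in $F$ that would separate $\mu$ from $1$).
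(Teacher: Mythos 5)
The paper does not actually prove this lemma---it cites \cite[Lemma~9.1.11]{Kern}---so there is no in-text argument to compare against. Your core idea is the natural one: sandwich $w$ between powers of $|f|$ via Corollary~\ref{cor_principal_ker_by_order}, evaluate on $\Skel(f)$ (which is nonempty by divisibility) to force $\Skel(f)\subseteq\Skel(w)$, and then read off integer proportionality of the exponent vectors from the containment of the corresponding affine hyperplanes in the divisible, torsion-free ordered group $\log\tT$. That part is sound, and together with the reduction to the constant $\mu := w^{q}f^{-p}=d^{q}c^{-p}\in F$ it is the right skeleton.

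The genuine gap is your last step. You obtain $\mu\nucong 1$ and then claim that ``a final power, made available by $\nu$-archimedeanness and divisibility,'' gives $\mu^{r}=1$. This cannot work as stated: $\mu$ lies in the tangible group $\tT$, which is lattice-ordered and hence torsion-free, so $\mu^{r}=1$ is \emph{equivalent} to $\mu=1$ and taking powers gains nothing; nor does divisibility or archimedeanness by itself convert $\nu$-equivalence into equality in $\tT$. The clean way to finish avoids $\nu$-equivalence entirely. Observe that $w\in\langle f\rangle$ and $f^{\pm p}\in\langle f\rangle$, so $\mu=w^{q}f^{-p}\in\langle f\rangle\cap F$. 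Since $f$ is an $\scrL$-monomial, it is not bounded from below, so $\langle f\rangle\cap F=\{1\}$ by Lemma~\ref{rem_HP_not_contains_contained_H}(1) (equivalently: any $\gamma\ne 1$ in $\langle f\rangle\cap F$ would force $|f|\ge_\nu|\gamma|^{1/N}$, contradicting unboundedness). Hence $\mu=1$ outright and $w^{q}=f^{p}$ with $q>0$ and $p\ne 0$ (as $\mathbf{k}\ne 0$). With this replacement the proof is complete.
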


Before refining this description, we recall \cite[Construction~2.6.1]{Kern}, to fix notation.

\begin{constr}\label{HO_construction}
Take a rational function  $f \in F(\la_1,...,\la_n)$ for which
$\Skel(f)\ne \emptyset$. Replacing $f$ by $|f|$, we may assume that
$f \geq_\nu 1$. Write $f = \frac{h}{g} = \frac{\sum_{i=1}^k
h_i}{\sum_{j=1}^m g_j}$ where $h_i$ and $g_j$ are monomials in
$F[\la_1,...,\la_n]$. For each $\bfa\in \Skel(f)$, let
$$H_a \subseteq H = \{ h_i \ : \ 1 \leq i \leq k \}; \qquad G_a \subseteq G = \{ g_j \ : \ 1 \leq j \leq m \}$$
be the sets of dominant monomials   at $\bfa$; thus,  $ h_i(\bfa) =
g_j(\bfa) $ for any $h_i \in H_\bfa$ and $g_j \in G_\bfa$. Let
$H_{\bfa}^{c} = H \setminus H_\bfa$ and $G_{\bfa}^{c} = G \setminus
G_\bfa$. Then, for any $h' \in H_\bfa$ and $h'' \in H_{\bfa}^{c}$, $
h'(\bfa)  + h''(\bfa) = h'(\bfa),$ or, equivalently,    $1 +
\frac{h''(\bfa)}{h'(\bfa)} = 1$. Similarly, for any $g' \in G_\bfa$
and $g'' \in G_{\bfa}^{c}$, $g'(\bfa)  + g''(\bfa) = g'(\bfa)$  or,
equivalently,  $1 + \frac{g''(\bfa)}{g'(\bfa)} = 1$.

Thus for any such $\bfa$ we obtain the  relations
\begin{equation}\label{eq_relations1}
\frac{h'}{g'} = 1, \qquad \forall h' \in  H_{\bfa},\ g' \in
G_{\bfa},
\end{equation}
\begin{equation}\label{eq_relations2}
1  +   \frac{h''}{h'} = 1  ; \ 1  +   \frac{g''}{g'} = 1,  \qquad
\forall h' \in H_{\bfa},\ h'' \in H_{\bfa}^{c},\ g' \in G_{\bfa},\
g'' \in G_{\bfa}^{c}.
\end{equation}
As $\bfa $ runs over $\Skel(f)$,  there are only finitely many
possibilities for $H_a$ and $G_a$ and thus for the relations in
\eqref{eq_relations1} and~\eqref{eq_relations2}; we denote these as
$(\theta_1(i), \theta_2(i)),$ $i = 1,...,q$.

In other words, for any $1 \le i \le q,$ the pair $(\theta_1(i),
\theta_2(i))$ corresponds to a kernel $K_i$ generated by the
corresponding elements
$$\frac{h'}{g'}  ,\ \left(1  +   \frac{h''}{h'}\right),
\ \text{and} \  \left(1  +   \frac{g''}{g'}\right),$$ where $\{
\frac{h'}{g'} = 1 \}  \in \theta_1$ and $\{ 1  +   \frac{g''}{g'} =
1\},\ \{ 1  +   \frac{h''}{h'} = 1\} \in \theta_2$.

Reversing the argument, every point satisfying one of these $q$ sets
of relations  is in $\Skel(f)$. Hence,
\begin{equation}\label{localexpansion} \Skel\left(\langle f \rangle
\cap \langle F \rangle \right) =\Skel(f) =
\bigcup_{i=1}^{q}\Skel(K_i) = \bigcup_{i=1}^{q}\Skel\left(K_i \cap
\langle F \rangle\right)\end{equation} $$=
\Skel\left(\bigcap_{i=1}^{q}(K_i \cap \langle F \rangle)\right),$$
Hence  $\langle f \rangle \cap \langle F \rangle =
\bigcap_{i=1}^{q}K_i \cap \langle F \rangle$, since $\langle f
\rangle \cap \langle F \rangle, \ \bigcap_{i=1}^{q}K_i \cap \langle
F \rangle \in \PCon(\langle F \rangle)$.
 $\bigcap_{i=1}^{q}K_i$ provides a local description of $f$ in a neighborhood of its $\onenu$-set .

Let us view this construction globally. We used the $\onenu$-set  of
$\langle f \rangle$ to construct $\bigcap_{i=1}^{q}K_i$. Adjoining
various points $\bfa$ in $F^{(n)}$ might add some regions,
complementary to the regions defined by \eqref{eq_relations2} in
$\theta_2(i)$ for $i=1,...,q$, over which $\frac{h'}{g'} \neq 1,
\forall h' \in H_{\bfa}, \forall g' \in G_{\bfa}$ for each $\bfa$,
i.e., regions over which the dominating monomials never agree.
Continuing the construction above using  $\bfa \in F^{(n)} \setminus
\Skel{(f)}$ similarly produces a finite collection of, say $t \in
\mathbb{Z}_{\geq 0}$, kernels generated by elements from
\eqref{eq_relations2} and their complementary order fractions and by
elements of the form \eqref{eq_relations1} (where now $\frac{h'}{g'}
\neq 1$ over the region considered). Any
 principal kernel $N_j = \langle q_j \rangle$,
   $1 \leq j \leq t$, of this complementary set of kernels has the
   property that $\Skel(N_j) = \emptyset$, and thus by
   Corollary \ref{cor_empty_kernels_correspond_to_bfb_kernels}, $N_j$ is bounded from below. As there are finitely many such kernels there exists small enough $\gamma >_\nu 1$ in $\tT$
   for which $|q_j| \wedge \gamma = \gamma$ for $j = 1,...,t$. Thus $\bigcap_{j=1}^{t}N_j$ is bounded from below and thus  $\bigcap_{j=1}^{t}N_j \supseteq \langle F \rangle$ by Remark
   \ref{rem_bounded_from_below_contain_H_kernel}.

Piecing this together with \eqref{localexpansion} yields $f$ over
all of $F^{(n)}$, so we have
\begin{equation}\label{full_expansion}
\langle f \rangle = \bigcap_{i=1}^{q}K_i \cap \bigcap_{j=1}^{t}N_j.
\end{equation}

So, $\langle f \rangle \cap \langle F \rangle = \bigcap_{i=1}^{q}K_i \cap \bigcap_{j=1}^{t}N_j \cap \langle F \rangle = \bigcap_{i=1}^{q}K_i \cap \langle F \rangle$.\\
\end{constr}

In  this way, we see that intersecting a principal
 kernel $\langle f \rangle$  with $\langle F \rangle$ `chops off' all of the  bounded from below kernels in \eqref{full_expansion} (the $N_j$'s given above). This eliminates ambiguity in the kernel corresponding to $\Skel(f)$.\\
Finally we note that if $\Skel(f) = \emptyset$, then $\langle f \rangle = \bigcap_{j=1}^{t}N_j$ for appropriate kernels $N_j$ and   $\langle f \rangle \cap \langle F \rangle = \langle F \rangle$.\\

\begin{rem}$ $
\begin{enumerate}\eroman
  \item If $K_1$ and $K_2$ are such that $K_1  K_2 \cap F = \{1\}$ (i.e., $\Skel(K_1) \cap \Skel(K_2) \neq \emptyset$), then the sets of $\scrL$-monomials~$\theta_1$ of $K_1$ and of $K_2$ are not the same
   (although one may contain the other), for otherwise together they would yield a single kernel via Construction~\ref{HO_construction}.

  \item The kernels $K_i$, being finitely generated, are in fact principal,
  so we can write $K_i = \langle k_i \rangle$ for rational functions $k_1,
\dots, k_q$. Let $\langle f \rangle \cap \langle F \rangle =
\bigcap_{i=1}^{q}(K_i \cap \langle F \rangle)  =
\bigcap_{i=1}^{q}\langle |k_i| \wedge |\alpha| \rangle =
\bigwedge_{i=1}^{q} \langle |k_i| \wedge |\alpha| \rangle$ with
$\alpha \in F \setminus \{ 1 \}$. By~\cite[Theorem~8.5.3]{Kern},
for any generator $f'$ of $\langle f \rangle \cap \langle F
\rangle$ we have  $|f'| = \bigwedge_{i=1}^{q} |k_i'|$ with $k_i'
\sim_{\FF} |k_i| \wedge |\alpha|$ for every $i = 1,...,q$. In
particular, $\Skel(k_i') = \Skel(|k_i| \wedge |\alpha|) =
\Skel(k_i)$. Thus the kernels $K_i$ are independent of the choice
of generator $f$, being defined by the components $\Skel(k_i)$ of
$\Skel(f)$.
\end{enumerate}
\end{rem}

Two instances of Construction~\ref{HO_construction} are given in
\cite[Examples~2.6.3, 2.6.4]{Kern}.

\begin{defn}
A rational function
$g \in F(\Lambda)$ is  \textbf{bounded from below} if
there exists some $\alpha >_\nu 1$ in $F$ such that $|g| \ge_\nu
\alpha$.

An important instance: the  \textbf{$\scrL$-binomial} $o$ defined by an $\scrL$-monomial
$f$ is the rational function   $1 +   f$.
The \textbf{complementary $\scrL$-binomial} $o^{c}$ of $o$ is $ 1 +
f^{-1}$. By definition $(\mathcal O^c)^c = \mathcal O$.
The \textbf{order kernel} of the \semifield0 $F(\la_1,...,\la_n)$
defined by $f$ is the principal kernel   $\mathcal O = \langle o
\rangle$ for the $\scrL$-binomial $o = 1  +   f$.
 The \textbf{complementary order kernel} $\mathcal O^c$  of $\mathcal O$ is $ \langle o^c \rangle$.

A rational function  $f \in F(\la_1,...,\la_n)$ is said to be a
\textbf{region fraction}   if   $\Skel (f)$ contains some nonempty
open interval. A \textbf{region kernel} is a principal kernel  generated by a
region fraction.

\end{defn}

\begin{lem}\cite[Lemma~9.1.16]{Kern}
 $f \sim_{\FF} \sum_{i=1}^{t} |o_i|$ is a region fraction iff,
 writing $o_i = 1+f_i$ for $\scrL$-monomials $f_i$, we have $f_i \not\nucong f_j^{\pm1}$ for every $i\ne
 j.$\end{lem}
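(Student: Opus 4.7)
The plan is to express $\Skel(f)$ as an intersection of closed tropical half-spaces, one per index $i$, and then read off when that intersection can contain an open interval.

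First I would establish the identity
\begin{equation*}
\Skel(f)\;=\;\bigcap_{i=1}^{t}\bigl\{\bfa\in F^{(n)}\,:\,f_i(\bfa)\le_\nu 1\bigr\}.
\end{equation*}
Indeed, $f\sim_{\FF}\sum|o_i|$ gives $\Skel(f)=\Skel\bigl(\sum|o_i|\bigr)$, because $\Skel(\FF)=\emptyset$ and the Zariski correspondence for kernels converts $\cap$ into $\cup$ on $\onenu$-sets. Each $|o_i|=o_i+o_i^{-1}\ge_\nu 1$ pointwise, so a finite sum of such elements is $\nu$-equivalent to $\fone$ at $\bfa$ iff every summand is; this yields $\Skel\bigl(\sum|o_i|\bigr)=\bigcap_i\Skel(|o_i|)$. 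Now $|o_i|(\bfa)\nucong 1$ forces $o_i(\bfa)\nucong 1$ (otherwise whichever of $o_i,o_i^{-1}$ is $\nu$-larger would strictly dominate $1$), and $o_i=1+f_i\nucong 1$ iff $f_i\le_\nu 1$. In logarithmic coordinates on $\tG$, each such set is a closed affine half-space whose boundary is the tropical hyperplane $\{f_i\nucong 1\}$.

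For the $(\Rightarrow)$ direction I would argue by contrapositive. If some pair $i\ne j$ satisfies $f_i\nucong f_j^{-1}$, then $\{f_j\le_\nu 1\}=\{f_i\ge_\nu 1\}$ is the half-space opposite to $\{f_i\le_\nu 1\}$, so their intersection is the hyperplane $\{f_i\nucong 1\}$, which has empty interior in $F^{(n)}$. Hence $\Skel(f)$ contains no open interval, contradicting that $f$ is a region fraction. The case $f_i\nucong f_j$ (same sign) forces the two half-spaces to coincide, so $|o_i|\sim_{\FF}|o_j|$ and one summand is redundant; the canonical HS-presentation obtained from Construction~\ref{HO_construction} suppresses such duplications, so this case does not occur in a reduced decomposition.

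For the $(\Leftarrow)$ direction, assume no $f_i$ is $\nu$-equivalent to $f_j^{\pm 1}$ for $i\ne j$. In logarithmic coordinates the defining functionals $\ell_i$ of $\{f_i\le_\nu 1\}$ have pairwise non-proportional (signed) linear parts. I would then invoke the canonicality of the HO-decomposition: the $o_i$ in a reduced HS-presentation correspond (up to $\sim_{\FF}$) to distinct essential facets of the polyhedron $\Skel(f)$ cut out by the $\ell_i$, so the intersection is full-dimensional and in particular contains an open interval. The main obstacle is precisely this direction: pairwise non-opposition of half-spaces is \emph{a priori} insufficient to guarantee a nonempty interior, because three or more of the $\ell_i$'s could in principle satisfy a positive linear dependence and collapse the intersection onto a lower-dimensional face. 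The resolution must lean on the fact that the HO-decomposition of Construction~\ref{HO_construction} is canonical up to $\sim_{\FF}$: any such positive dependence would render one $o_i$ redundant, hence provide a pair violating the hypothesis, so no such collapse can occur once the pairwise condition holds. Making this canonicality argument precise is where the real content of the lemma sits.
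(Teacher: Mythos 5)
The paper only cites this lemma from \cite{Kern} without reproducing its proof, so I can only assess the argument on its own terms. Your setup is sound: for $|o_i|\ge_\nu 1$ one indeed has $\Skel\bigl(\sum_i|o_i|\bigr)=\bigcap_i\Skel(|o_i|)$, and since $o_i=1+f_i\ge_\nu 1$ forces $|o_i|\nucong o_i$, each $\Skel(|o_i|)=\{\bfa: f_i(\bfa)\le_\nu 1\}$ is a closed tropical half-space. The forward direction in the anti-parallel case ($f_i\nucong f_j^{-1}$) is also correct.

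However, the $(\Leftarrow)$ direction has a real gap, and the patch you propose for it is false. You correctly anticipate that a positive dependence among three or more of the linear forms could collapse the intersection, but you then assert that any such dependence would "render one $o_i$ redundant, hence provide a pair violating the hypothesis." That implication does not hold. Take $n=2$, $f_1=\la_1$, $f_2=\la_2$, $f_3=\la_1^{-1}\la_2^{-1}$. Then $f_i\not\nucong f_j^{\pm 1}$ for all $i\ne j$, yet in log coordinates the constraints read $x\le 0$, $y\le 0$, $x+y\ge 0$, whose common solution set is the single point $\{(0,0)\}$ --- not a region fraction. Crucially, none of the three constraints is implied by the other two (deleting any one of them leaves a half-plane), so nothing is redundant and no pairwise violation appears. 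So the pairwise condition alone does not force full-dimensionality, and "making the canonicality argument precise" is not a matter of detail: the argument as sketched simply does not reach the conclusion. The missing ingredient has to be structural --- for instance, that the $o_i$'s arising from Construction~\ref{HO_construction} all come from a fixed base point $\bfa$ at which every $f_i(\bfa)<_\nu 1$ strictly, which hands you an interior point by fiat --- and that structure is not recoverable from the pairwise non-equivalence hypothesis. Your treatment of the $f_i\nucong f_j$ case likewise appeals to a "reduced decomposition" assumption that the lemma as stated does not grant, so the forward direction is also incomplete as written.
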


\begin{defn}\label{defn_HO_fraction}
A rational function  $f \in F(\la_1,...,\la_n)$ is  an
\textbf{HO-fraction} if it is the sum of   an HS-fraction $f'$ and a
region fraction $o_f$. (In particular,   any HS-kernel or any
region kernel is an HO-kernel.)

A principal kernel $K \in \PCon(F(\la_1,...,\la_n))$ is said to be
an \textbf{HO-kernel} if it is generated by an HO-fraction.
\end{defn}

\begin{lem}\cite[Lemma~2.6.7]{Kern}
 A principal kernel $K$ is an HO-kernel if and only if
   $K = L  R$ where $L$ is an HS-kernel and $R$ is a region kernel.
\end{lem}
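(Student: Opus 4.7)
The plan is to reduce both implications to the single identity $\langle a+b \rangle = \langle a \rangle \cdot \langle b \rangle$, valid for positive $a,b \ge_\nu 1$ in $F(\Lambda)$. This identity follows from Remark~\ref{bfacts1}(vi), which gives $\langle s_1,\dots,s_m \rangle = \langle \sum_i |s_i| \rangle$ (so $\langle a,b \rangle = \langle a+b \rangle$ when $a,b$ are already positive), combined with Remark~\ref{bfacts}(iii), which identifies $\langle a \rangle \cdot \langle b \rangle$ with the smallest kernel containing $a$ and $b$. Auxiliary to this is the standard fact $\langle g \rangle = \langle |g| \rangle$ for every $g$ (the inclusion $\supseteq$ is Remark~\ref{bfacts1}(iii); the reverse uses that $|g| = g + g^{-1}$ lies in any $\nu$-kernel containing $g$, by $\nu$-convexity applied with $\alpha = \beta = 1$, since $1+1 \nucong 1$). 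In particular every HS-fraction $g$ is $\sim_{\FF}$-equivalent to $|g|$, and likewise every region fraction (where one may additionally observe $\Skel(|g|) = \Skel(g)$), so we may tacitly assume throughout that the generators in play are $\ge_\nu 1$.

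For the forward direction, suppose $K = \langle f \rangle$ with $f = f' + o_f$ an HO-fraction, $f'$ an HS-fraction and $o_f$ a region fraction. After replacing each by its absolute value, we have $f', o_f \ge_\nu 1$, and $f' + o_f$ still generates $K$. Setting $L := \langle f' \rangle$ (an HS-kernel) and $R := \langle o_f \rangle$ (a region kernel), the displayed identity applied to $a = f'$, $b = o_f$ gives
\[ LR \;=\; \langle f', o_f \rangle \;=\; \langle f' + o_f \rangle \;=\; \langle f \rangle \;=\; K. \]

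For the reverse direction, suppose $K = LR$ with $L = \langle f' \rangle$ an HS-kernel and $R = \langle o_f \rangle$ a region kernel; after the same normalization take $f', o_f \ge_\nu 1$. The identity again yields $K = \langle f' + o_f \rangle$, and $f' + o_f$ is by definition an HO-fraction, so $K$ is an HO-kernel (and is automatically a principal kernel, consistent with Corollary~\ref{cor_principal_kernels_sublattice}). The most delicate point, and the one I would watch most carefully, is the reduction to positive representatives: one must verify that the classes of HS-fractions and region fractions are each stable under $g \mapsto |g|$. This is precisely why these classes are defined up to $\sim_{\FF}$-equivalence rather than as literal sums; once this closure is in hand, the whole argument is just an application of the two remarks cited together with closure of kernels under addition in the idempotent setting.
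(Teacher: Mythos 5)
Your structural plan is sound in one direction but has a real hole in the other. The identity $\langle a+b\rangle=\langle a\rangle\cdot\langle b\rangle$ for $a,b\ge_\nu 1$ is correct (combining Remark~\ref{bfacts1}(vi) with Remark~\ref{bfacts}(iii)), and the reverse implication ($K=LR\Rightarrow K$ is an HO-kernel) goes through cleanly: you are free to \emph{choose} positive generators $f'=\sum|f_i|\ge_\nu 1$ for $L$ and $o_f=|o_f|\ge_\nu 1$ for $R$, and then $LR=\langle f'+o_f\rangle$ with $f'+o_f$ an HO-fraction.

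The gap is in the forward direction, in the unargued sentence ``After replacing each by its absolute value, $\dots$ $f'+o_f$ still generates $K$.'' If $f=f'+o_f$ is an HO-fraction whose given summands are \emph{not} already $\ge_\nu 1$, then replacing them changes the sum and can change the kernel. Concretely, in $F(\la_1,\la_2)$ take $f'=\la_1$ (an HS-fraction, since $\la_1\sim_{\FF}|\la_1|$) and $o_f=1+\la_2$ (a region fraction). Then $f=\la_1+1+\la_2$ has $\Skel(f)=\{\la_1\le_\nu 1,\ \la_2\le_\nu 1\}$, whereas $|f'|+|o_f|=|\la_1|+1+\la_2$ has the strictly smaller skeleton $\{\la_1\nucong 1,\ \la_2\le_\nu 1\}$, so $\langle f\rangle\subsetneq\langle|f'|+|o_f|\rangle=\langle f'\rangle\cdot\langle o_f\rangle$. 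Thus your proposed $L=\langle f'\rangle$, $R=\langle o_f\rangle$ satisfy $LR\supsetneq K$, not $LR=K$. (The lemma still holds for this $K$, but with a different factorization: here $f$ is itself a region fraction, so one takes $L$ trivial and $R=K$.)

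Note also that the ``delicate point'' you flagged---stability of the classes of HS-fractions and region fractions under $g\mapsto|g|$---is true but is not where the danger lies. The classes are stable, since $\langle g\rangle=\langle|g|\rangle$ and $\Skel(g)=\Skel(|g|)$; what fails is that taking absolute values term-by-term and then summing does not commute with summing and then taking the kernel. The proof needs to justify that one can take the HS-summand and region-summand of the defining decomposition to already be $\ge_\nu 1$ (i.e., that the definition of HO-fraction may be read with the canonical positive forms $\sum|f_i|$ and $\sum|o_j|$); with that in hand the argument is just the identity above, and the ``replacement'' step becomes a no-op.
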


\begin{thm}\label{thm_HP_expansion}\cite[Theorem~2.6.8]{Kern}
Every principal kernel $\langle f \rangle$ of $F(\la_1,...,\la_n)$
can be written as the intersection of finitely many principal
kernels
$$\{K_i : i=1,...,q\} \ \text{and} \ \{N_j : j = 1,...,m \},$$
whereas each $K_i$ is the product of an HS-kernel and a region
kernel
\begin{equation}\label{consab}
K_i = L_i  R_i =  \prod_{j=1}^{t_i}L_{i,j} \prod_{k=1}^{k_i}\mathcal
o_{i,k}
\end{equation}
while each $N_j$ is a product of bounded from below kernels and
(complementary) region kernels. For $\langle f \rangle \in
\PCon(\langle F \rangle)$, the~$N_j$ can be replaced by $\langle F
\rangle$ without affecting $\langle f \rangle $.\end{thm}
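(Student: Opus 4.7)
The plan is to apply Construction~\ref{HO_construction} directly to $f$ and then read off the claimed structure from the generators of the kernels it produces. The construction already furnishes a finite decomposition
\[
\langle f \rangle = \bigcap_{i=1}^{q} K_i \cap \bigcap_{j=1}^{t} N_j,
\]
so the remaining work is to express each $K_i$ and $N_j$ as a product of the specified kinds of kernels, and then to handle the final clause about $\langle F \rangle$.

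For each $K_i$, I would partition its generators according to the dominance pattern $(\theta_1(i),\theta_2(i))$. Those of type~\eqref{eq_relations1}, namely the $\scrL$-monomials $\frac{h'}{g'}$ for $h' \in H_\bfa,\ g' \in G_\bfa$, each generate an HP-kernel $L_{i,j}$, so their product $L_i = \prod_{j=1}^{t_i} L_{i,j}$ is a product of HP-kernels and thus an HS-kernel by Proposition~\ref{remHSprodHP}. Those of type~\eqref{eq_relations2}, namely the $\scrL$-binomials $1 + \frac{h''}{h'}$ and $1 + \frac{g''}{g'}$, each generate an order kernel $\mathcal o_{i,k}$; their product $R_i = \prod_{k=1}^{k_i} \mathcal o_{i,k}$ is a region kernel. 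Since the kernel generated by a finite list of elements equals the product of the principal kernels of the entries (Remark~\ref{bfacts}(iii), applied repeatedly), one gets $K_i = L_i R_i$, as in~\eqref{consab}.

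For each $N_j$ the generator shapes are identical, but on the corresponding complementary region the $\scrL$-monomial generators $\frac{h'}{g'}$ fail to be $\nu$-equivalent to $1$: by the uniform boundedness argument inside the construction (namely $|q_j| \wedge \gamma = \gamma$ for a suitable $\gamma >_\nu 1$ in $\tT$), each such $\scrL$-monomial generates a bounded-from-below kernel. The binomial relations from~\eqref{eq_relations2} together with their complements $1 + (h''/h')^{-1}$, $1 + (g''/g')^{-1}$ generate order kernels and complementary order kernels, i.e.\ (complementary) region kernels. Thus $N_j$ is a product of bounded-from-below kernels and (complementary) region kernels. Finally, when $\langle f \rangle \in \PCon(\langle F \rangle)$, each $N_j$ is bounded from below and hence contains $\langle F \rangle$ by Remark~\ref{rem_bounded_from_below_contain_H_kernel}, so
\[
\langle f \rangle \;=\; \langle f \rangle \cap \langle F \rangle \;=\; \bigcap_{i=1}^{q} K_i \cap \bigcap_{j=1}^{t} N_j \cap \langle F \rangle \;=\; \bigcap_{i=1}^{q} K_i \cap \langle F \rangle,
\]
which is precisely the assertion that the $N_j$'s may be absorbed into the single kernel $\langle F \rangle$ without affecting $\langle f \rangle$.

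The main obstacle is bookkeeping rather than any hard new idea: one must keep track of which generators become HP-kernels, order kernels, or bounded-from-below kernels, and one must justify that the intersection is finite (which follows from the finiteness of the possible dominance patterns $(H_\bfa, G_\bfa)$) and that the kernel generated by a finite collection of relations equals the product of the principal kernels they each generate (Remark~\ref{bfacts}(iii)). Since all of this information is already embedded in Construction~\ref{HO_construction}, the theorem amounts to a structural reading of that construction.
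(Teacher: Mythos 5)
Your overall strategy---reading the decomposition off Construction~\ref{HO_construction}---is the right one and is precisely how the cited theorem is meant to be understood; the construction is recalled in the paper at length exactly so that this theorem becomes a structural reading of it. The treatment of the $K_i$ is correct: partitioning the generators according to the two shapes~\eqref{eq_relations1} and~\eqref{eq_relations2}, invoking Proposition~\ref{remHSprodHP} for the HS-part, and using Remark~\ref{bfacts}(iii) to turn the finite generating set into a product of principal kernels, all check out. The argument for the final clause (absorbing the $N_j$'s into $\langle F\rangle$) is also correct and mirrors the last line of the construction.

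However, the step for the $N_j$'s contains a genuine error. You claim that on the complementary region ``each such $\scrL$-monomial generates a bounded-from-below kernel.'' That is false: an $\scrL$-monomial always generates an HP-kernel, which is never bounded from below (when $F(\Lambda)$ is unbounded)---this is exactly Lemma~\ref{rem_HP_not_contains_contained_H}(2). The fact that $\frac{h'}{g'}$ is never $\nu$-equivalent to $1$ \emph{on the complementary region} does not change the kernel $\langle \frac{h'}{g'}\rangle$, which is a global object with nonempty $\onenu$-set. What the construction actually proves is that the kernel $N_j = \langle q_j\rangle$ \emph{as a whole} satisfies $\Skel(N_j)=\emptyset$, because the region constraints in~\eqref{eq_relations2} force $\frac{h'}{g'}\ne 1$ wherever those constraints are satisfied; it is the \emph{combination} of the $\scrL$-monomial generator with the (complementary) order generators that has empty $\onenu$-set, and this is what makes $N_j$ bounded from below (via the external corollary cited in the construction). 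You also misattribute the uniform lower bound $|q_j|\wedge\gamma=\gamma$: that estimate applies to the full generator $q_j$ of $N_j$, not to individual $\scrL$-monomials. So your derivation of the claimed factorization of $N_j$ into bounded-from-below and region-kernel factors does not go through as written. The good news is that your use of this step is benign for the last clause of the theorem: there you only need that each $N_j$ contains $\langle F\rangle$, and for that the construction's conclusion that $N_j$ itself is bounded from below suffices. To actually establish the asserted product structure of $N_j$, you would need to identify the bounded-from-below factor $B_j$ correctly---it is not $\langle\frac{h'}{g'}\rangle$---and that requires a more careful reading of which composite generator in the complementary region contributes the bounded-from-below piece.
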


\section{Convexity degree and
hyperdimension}\label{section:Convexity degree and Hyperdimension}

Let $\langle f \rangle \subseteq \langle F \rangle$ be a principal kernel
and let $\langle f \rangle = \bigcap_{i=1}^{s} K_i$, where
$$K_i = (L_i \cdot R_i) \cap \langle F \rangle = (L_i \cap \langle F \rangle) \cdot (R_i \cap \langle F \rangle) = L_i' \cdot R_i'$$
is its (full) HO-decomposition; i.e., for each $1 \leq i \leq s$, \
$R_i \in \PCon(F)$ is a region kernel and $L_i\in
\PCon(F)$ is either an HS-kernel or bounded from below
(in which case  $L_i' = \langle F \rangle$). Then by
Corollary~\ref{cor_subdirect_decomposition_for_semifield_of_fractions},
we have the subdirect decomposition
$$\langle F \rangle/ \langle f \rangle  \hookrightarrow \prod_{i=1}^{t} \langle F \rangle /K_i = \prod_{i=1}^{t} (\langle F \rangle /L_i' \cdot R_i')$$
where $t \leq s$ is the number of kernels $K_i$ for which $L_i' \neq
\langle F \rangle$ (for otherwise $\langle F \rangle /K_i~=~\{1 \}$
and can be omitted from the subdirect product).

\begin{exmp}\label{exmp_infinite_chain}
Consider the principal kernel $\langle \la_1 \rangle \in
\PCon(F(\la_1,\la_2))$. For $\alpha \in F$ such that $\alpha > 1$,
we have the following infinite strictly descending chain of
principal kernels
$$\langle \la_1 \rangle \supset \langle |\la_1| + |\la_2+1| \rangle \supset \langle |\la_1| + |\alpha^{-1}\la_2 + 1| \rangle \supset \langle |\la_1| + |\alpha^{-2}\la_2 + 1| \rangle \supset \dots$$ $$\supset \langle |\la_1| + |\alpha^{-k}\la_2 + 1| \rangle \supset \dots$$ and the strictly ascending chain of $\onenu$-sets corresponding to it.
$$\skel(\la_1) \subset \skel(|\la_1| + |\la_2+1|) \subset \dots \subset \skel(|\la_1| + |\alpha^{-k}\la_2 + 1|) \subset \dots =$$ $$\skel(\la_1) \subset \skel(\la_1) \cap \skel(\la_2+1) \subset \dots \subset \skel(\la_1) \cap \skel(\alpha^{-k}\la_2 + 1) \subset \dots.$$
\end{exmp}
%
%
%
%
%
%
%
%
%

\begin{exmp}\label{exmp_decomposition}
Again, consider the principal kernel $\langle x \rangle \in
\PCon(F(x,y))$. Then $$\langle x \rangle = \langle |x| + (|y + 1|
\wedge |\frac{1}{y} + 1|) \rangle = \langle  (|x| + |y + 1|)
\wedge (|x| + |\frac{1}{y} + 1|) \rangle = \langle |x| + |y + 1|
\rangle \cap \langle |x| + |\frac{1}{y} + 1| \rangle.$$ So, we
have the nontrivial decomposition of $\skel(x)$ as $\skel(|x| + |y
+ 1|) \cup \skel(|x| + |\frac{1}{y} + 1|)$ (note that $\skel(|x| +
|y+1|) = \skel(x) \cap \skel(y+1)$,  and furthermore $\skel(|x| +
|\frac{1}{y} + 1|) = \skel(x) \cap \skel(\frac{1}{y} + 1)$). In a
similar way, using complementary order kernels,  one can show that
every principal kernel can be nontrivially decomposed to a pair of
principal kernels.
\end{exmp}
%
%
%
%
%
%
%
%
%

\subsection{Reducible kernels}$ $

Examples \ref{exmp_infinite_chain} and \ref{exmp_decomposition}
demonstrate that the lattice of principal kernels
$\PCon(F(\Lambda))$ (resp. $\PCon(\langle F \rangle)$) is too rich
to define reducibility or finite dimension. (See \cite{Bi} for a
discussion of infinite dimension.) Moreover, these examples suggest
that this richness is caused by order kernels. This motivates us to
consider $\Theta$-reducibility for a suitable sublattice of kernels
$\Theta \subset \PCon(F(\Lambda))$ (resp. $\Theta \subset
\PCon(\langle F \rangle)$).

There are various  families of
 kernels that could be utilized to define the notions of reducibility,
dimensionality, and so forth. We take $\Theta$ to be the
sublattice generated by HP-kernels, because of its connection to
the (local) dimension of the linear spaces (in logarithmic scale)
defined by the $\onenu$-set corresponding to a kernel. Namely,
HP-kernels, and more generally   HS-kernels, define affine
subspaces of $F^{(n)}$ (see \cite[\S 9.2]{Kern}).
%
%
%
 We work with Definition~\ref{Omega}.

\begin{defn}\label{irre} A kernel $\langle f \rangle \in \Omega(F(\Lambda))$ is
\textbf{reducible} if there are  $ \langle g \rangle, \langle h
\rangle \in \Omega(F(\Lambda))$ for which  $ \langle g \rangle,
\langle h \rangle \not \subseteq \langle f \rangle$ but $\langle g
\rangle \cap \langle h \rangle \subseteq \langle f \rangle$.
\end{defn}


\begin{lem}\label{irred}
 $\langle f \rangle$ is
reducible iff  $\langle f \rangle  = \langle g \rangle \cap \langle
h \rangle$ where $\langle f \rangle \neq \langle g \rangle$ and
$\langle f \rangle \neq \langle h \rangle.$
\end{lem}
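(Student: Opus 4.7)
The plan is to treat the two implications separately. The reverse implication $(\Leftarrow)$ is essentially formal, and the forward implication $(\Rightarrow)$ is the substantive one: it requires turning a reducibility witness $(\langle g\rangle,\langle h\rangle)$, where only $\langle g\rangle\cap\langle h\rangle\subseteq\langle f\rangle$ is assumed, into a genuine decomposition $\langle f\rangle=\langle g'\rangle\cap\langle h'\rangle$ with both factors strictly bigger than $\langle f\rangle$.

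For $(\Leftarrow)$, suppose $\langle f\rangle=\langle g\rangle\cap\langle h\rangle$ with $\langle g\rangle,\langle h\rangle\neq\langle f\rangle$. Then $\langle f\rangle\subseteq\langle g\rangle$; were $\langle g\rangle\subseteq\langle f\rangle$, we would have $\langle g\rangle=\langle f\rangle$, contradicting the hypothesis. Hence $\langle g\rangle\not\subseteq\langle f\rangle$, and symmetrically $\langle h\rangle\not\subseteq\langle f\rangle$. Together with $\langle g\rangle\cap\langle h\rangle=\langle f\rangle\subseteq\langle f\rangle$, this exhibits reducibility.

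For $(\Rightarrow)$, pick witnesses $\langle g\rangle,\langle h\rangle\in\Omega(F(\Lambda))$ with $\langle g\rangle,\langle h\rangle\not\subseteq\langle f\rangle$ and $\langle g\rangle\cap\langle h\rangle\subseteq\langle f\rangle$. The natural move is to enlarge both: set $\langle g'\rangle:=\langle f\rangle\cdot\langle g\rangle$ and $\langle h'\rangle:=\langle f\rangle\cdot\langle h\rangle$. These are in $\Omega(F(\Lambda))$ by Definition~\ref{Omega}, and since $\langle g\rangle,\langle h\rangle\not\subseteq\langle f\rangle$, each is a strict enlargement of $\langle f\rangle$, so certainly $\langle g'\rangle\neq\langle f\rangle$ and $\langle h'\rangle\neq\langle f\rangle$. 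The inclusion $\langle g'\rangle\cap\langle h'\rangle\supseteq\langle f\rangle$ is immediate; the reverse inclusion is
\[(\langle f\rangle\cdot\langle g\rangle)\cap(\langle f\rangle\cdot\langle h\rangle)\;=\;\langle f\rangle\cdot(\langle g\rangle\cap\langle h\rangle)\;\subseteq\;\langle f\rangle\cdot\langle f\rangle\;=\;\langle f\rangle,\]
which gives exactly $\langle g'\rangle\cap\langle h'\rangle=\langle f\rangle$.

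The only non-formal step, and therefore the expected main obstacle, is the first equality in the displayed line: a distributivity identity $(A\cdot B)\cap(A\cdot C)=A\cdot(B\cap C)$ in the kernel lattice. This is where I would invoke the correspondence from Remark~\ref{corresp} between the \semifield0 $F(\Lambda)$ and its underlying Abelian lattice-ordered group, via which kernels match up with convex $\ell$-subgroups; the lattice of convex $\ell$-subgroups of an Abelian $\ell$-group is classically distributive, and distributivity is inherited by the sublattice $\Omega(F(\Lambda))$. If one prefers to stay inside the framework of this paper, the same identity can be checked at the level of generators using Proposition~\ref{prop_principal_ker} and Corollary~\ref{cor_principal_ker_by_order}, which describe principal kernels purely in terms of the order $|a|^{-n}\le x\le|a|^n$ and thus reduce the identity to a routine order-theoretic comparison of generators.
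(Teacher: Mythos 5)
Your proof is correct and is essentially the paper's argument, laid out more transparently: both hinge on enlarging the witnesses to $\langle f\rangle\cdot\langle g\rangle$ and $\langle f\rangle\cdot\langle h\rangle$, using $\langle f\rangle\cdot\langle f\rangle=\langle f\rangle$ and distributivity of the kernel lattice to force $(\langle f\rangle\cdot\langle g\rangle)\cap(\langle f\rangle\cdot\langle h\rangle)=\langle f\rangle$. The paper presents the nontrivial direction in a compressed (contrapositive) form, but the mechanism is identical, and your explicit handling of the distributive step via the $\ell$-group correspondence or the order-theoretic description of principal kernels correctly fills in what the paper leaves tacit.
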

\begin{proof}
Assume $\langle f \rangle$ admits the stated condition.
If $\langle f \rangle  \supseteq \langle g \rangle \cap \langle h \rangle$, then $\langle f \rangle =
\langle f \rangle \cdot \langle f \rangle  = (\langle g \rangle \cdot \langle f \rangle )\cap (\langle h \rangle \cdot \langle f \rangle)$. Thus $\langle f \rangle = \langle g \rangle \cdot \langle f \rangle$ or $\langle f \rangle = \langle h \rangle \cdot \langle f \rangle$, implying $\langle f \rangle \supseteq \langle g \rangle$ or $\langle f \rangle \supseteq \langle g \rangle$. The converse is obvious.\end{proof}

\begin{lem}
Let $\langle f \rangle$ be an HP-kernel. Then for any HP-kernels
$\langle g \rangle$ and $\langle h \rangle$ such that $\langle f
\rangle = \langle g \rangle \cap \langle h \rangle$ either
$\langle f \rangle = \langle g \rangle$ or $\langle f \rangle =
\langle h \rangle$. In other words, every HP-kernel is
irreducible.
\end{lem}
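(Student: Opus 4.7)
The plan is to use Lemma \ref{prop_HP_element_is_a_generator} (the rigidity result for $\scrL$-monomials inside an HP-kernel) together with Remark \ref{bfacts}(viii) (roots of elements of a kernel lie in the kernel). Since the hypothesis is $\langle f \rangle = \langle g \rangle \cap \langle h \rangle$, I automatically have $\langle f \rangle \subseteq \langle g \rangle$ (and symmetrically $\langle f \rangle \subseteq \langle h \rangle$); the real work is the reverse inclusion, which I will get by exploiting that $f$ itself is an $\scrL$-monomial sitting inside the HP-kernel $\langle g \rangle$.

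First I would unwind the definitions: $\langle f \rangle$ HP means $f$ is an $\scrL$-monomial, and similarly for $g,h$. From $\langle f \rangle \subseteq \langle g \rangle$ I read off $f \in \langle g \rangle$, and now Lemma \ref{prop_HP_element_is_a_generator} applies (this is where we silently use that $F$ is divisible, per the standing assumption from Proposition~\ref{prop_convex_dependence_of_HP_property} onward): there exist nonzero integers $s,k$ with $f^s = g^k$. Rewriting, $g^k = f^s \in \langle f \rangle$. If $k>0$, Remark~\ref{bfacts}(viii) yields $g \in \langle f \rangle$; if $k<0$, the same remark applied to $g^{-1}$ gives $g^{-1} \in \langle f \rangle$, and then $g \in \langle f \rangle$ because a kernel is a subgroup. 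Either way $\langle g \rangle \subseteq \langle f \rangle$, so $\langle f \rangle = \langle g \rangle$; the identical argument with $h$ in place of $g$ then gives $\langle f \rangle = \langle h \rangle$ as well, which is in fact stronger than what the statement demands.

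The only delicate point, and the step I would treat most carefully, is making sure Lemma \ref{prop_HP_element_is_a_generator} is legitimately available — namely verifying that $f$ really is an $\scrL$-monomial (immediate from the definition of HP-kernel) and that the divisibility hypothesis on $F$ is in force. Everything else is formal: the sign case analysis on $k$ (and on $s$, which plays no role since we only need $g^k \in \langle f \rangle$), and the invocation of Remark~\ref{bfacts}(viii), are routine. Lemma~\ref{irred} is not needed here because the hypothesis already presents $\langle f \rangle$ as an intersection; one does not have to promote a reducibility witness to an equality decomposition.
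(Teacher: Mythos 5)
Your argument is the paper's own: extract $f\in\langle g\rangle$ from $\langle f\rangle\subseteq\langle g\rangle$, apply Lemma~\ref{prop_HP_element_is_a_generator} to the $\scrL$-monomial $f$ inside the HP-kernel $\langle g\rangle$, and conclude $\langle f\rangle=\langle g\rangle$. You simply spell out the small step (taking roots via Remark~\ref{bfacts}(viii)) that the paper elides, and you correctly observe that Lemma~\ref{irred} is superfluous for the explicitly stated equality claim.
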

\begin{proof}
If $\langle f \rangle = \langle g \rangle \cap \langle h \rangle$
then $\langle f \rangle \subseteq  \langle g \rangle$ thus $f \in
\langle g \rangle$. As both  $f$ and $g$ are $\scrL$-monomials (up
to equivalence),   Lemma~\ref{prop_HP_element_is_a_generator} yields
$\langle f \rangle = \langle g \rangle$, which in turn, by
Lemma~\ref{irred}, implies that $\langle f \rangle$ is irreducible.
\end{proof}
\begin{cor}\label{cor_HS-decomposition}
Any HS-kernel $\langle f \rangle$ is irreducible.
\end{cor}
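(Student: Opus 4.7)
The plan is to reduce the HS case to the HP irreducibility just proved, by exploiting the factorization of HS-kernels as products of HP-kernels together with the structure of $\Omega(F(\Lambda))$. By Lemma \ref{irred} I may suppose $\langle f \rangle = \langle g \rangle \cap \langle h \rangle$ with $\langle g \rangle, \langle h \rangle \in \Omega(F(\Lambda))$, and must show that $\langle f \rangle = \langle g \rangle$ or $\langle f \rangle = \langle h \rangle$.

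First I would unfold $\langle g \rangle$ and $\langle h \rangle$. Since $\Omega(F(\Lambda))$ is generated by HP-kernels under product and intersection, every element is a finite intersection of products of HP-kernels, and by Proposition \ref{remHSprodHP} each such product is itself an HS-kernel. So I can write
\[ \langle g \rangle = \bigcap_{a} L_a, \qquad \langle h \rangle = \bigcap_{b} M_b, \]
where each $L_a$ and $M_b$ is an HS-kernel containing $\langle f \rangle$. This displays $\langle f \rangle$ as a finite intersection of HS-kernels, each containing it. If some $L_{a_0} = \langle f \rangle$, then $\langle f \rangle \subseteq \langle g \rangle \subseteq L_{a_0} = \langle f \rangle$, forcing $\langle g \rangle = \langle f \rangle$ (and similarly for the $M_b$). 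Hence it suffices to prove: if an HS-kernel $\langle f \rangle$ equals a finite intersection $\bigcap_i N_i$ of HS-kernels $N_i \supseteq \langle f \rangle$, then some $N_i = \langle f \rangle$.

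For this reduced claim I would pass to the geometric side. Each HS-kernel has as its $\onenu$-set an affine subspace of $F^{(n)}$ (as discussed around \cite[\S 9.2]{Kern}), and the identity $\langle f \rangle = \bigcap_i N_i$ translates, via the Zariski-type correspondence implicit in Construction \ref{HO_construction}, to $\Skel(\langle f \rangle) = \bigcup_i \Skel(N_i)$, with each $\Skel(N_i) \subseteq \Skel(\langle f \rangle)$ again an affine subspace. Since an affine subspace over an infinite tangible group cannot be a finite union of proper affine subspaces, some $\Skel(N_i) = \Skel(\langle f \rangle)$, and by the injectivity of the HS-kernel--to--affine-subspace correspondence this forces $N_i = \langle f \rangle$.

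The main obstacle is the final HS-Nullstellensatz step: two HS-kernels with the same $\onenu$-set must coincide. I would reduce this to the HP case by writing both HS-kernels as products of HP-kernels via Proposition \ref{remHSprodHP} and matching their factors using Lemma \ref{prop_HP_element_is_a_generator}, which pins down each HP-kernel's $\scrL$-monomial generator up to integer powers. This effectively transports the HP irreducibility of the preceding lemma to the HS setting and completes the argument.
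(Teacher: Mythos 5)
Your route is genuinely different from the paper's, and your reduction to finite intersections of HS-kernels is actually more careful than the paper about the distinction between HP-kernels and arbitrary elements of $\Omega(F(\Lambda))$: the paper's proof simply assumes $\langle g \rangle$ and $\langle h \rangle$ are HP-kernels, uses Proposition~\ref{remHSprodHP} to write $\langle f \rangle = \langle f_1 \rangle \cdots \langle f_t \rangle$, observes that each $\langle f_j \rangle \subseteq \langle f \rangle \subseteq \langle g \rangle$, and invokes Lemma~\ref{prop_HP_element_is_a_generator} to conclude $\langle f_j \rangle = \langle g \rangle$ for every $j$, hence $\langle f \rangle = \langle g \rangle$. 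That is a two-line, purely algebraic argument with no appeal to $\onenu$-sets at all.

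Your geometric argument, however, contains a genuine gap in the step you yourself flag as ``the main obstacle.'' You need that two HS-kernels with equal $\onenu$-sets coincide, and the matching strategy you propose --- writing both as products of HP-kernels and pairing factors via Lemma~\ref{prop_HP_element_is_a_generator} --- does not go through as stated: equality of $\onenu$-sets does not hand you any containment of an individual $\scrL$-monomial generator of one kernel inside the other, which is the hypothesis Lemma~\ref{prop_HP_element_is_a_generator} requires. (Think of two different finite sets of tropical hyperplanes cutting out the same affine subspace; nothing in the Zariski correspondence a priori identifies the individual factors.) In the paper's development this HS-Nullstellensatz is never proved and is not available at this point. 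A second, lesser gap is the claim that an affine subspace cannot be a finite union of proper affine subspaces; this is true over the divisible value group (a $\Q$-vector space), but it is invoked without justification and is not among the paper's tools. The paper's algebraic argument is strictly shorter and avoids both issues by exploiting the containment $\langle f \rangle \subseteq \langle g \rangle$, which \emph{does} directly put each $\scrL$-monomial factor $f_j$ inside the HP-kernel $\langle g \rangle$, exactly the situation Lemma~\ref{prop_HP_element_is_a_generator} handles.
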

\begin{proof}
If $\langle f \rangle =  \langle g \rangle \cap \langle h \rangle$
for HP-kernels $\langle g \rangle$ and $\langle h \rangle$, then
$\langle f \rangle \subseteq \langle g \rangle$. But $\langle f
\rangle$ is a product $\langle f_1 \rangle \cdots \langle f_t
\rangle$  of finitely many HP-kernels. For each $1 \le j \le t$,
$\langle f_j \rangle \subseteq \langle g \rangle$  yielding $\langle
g \rangle = \langle f_j  \rangle$  by
Lemma~\ref{prop_HP_element_is_a_generator}, and so $\langle f
\rangle = \langle g \rangle$.
\end{proof}

\begin{cor}\label{cor_reducibility_of_HS_kernels}
If $\langle f \rangle = \langle g \rangle \cap \langle h \rangle$
for  HS-kernels $\langle g \rangle$ and $\langle h \rangle$, then
either $\langle f \rangle = \langle g \rangle$ or $\langle f \rangle
= \langle h \rangle$.
\end{cor}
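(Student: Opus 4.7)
The plan is to pass to the $\onenu$-set side via a Zariski-type identity for intersections, and exploit the irreducibility of affine flats under finite unions. The statement is implicit on $\langle f \rangle$ itself being HS (otherwise it would fail for $\langle \la_1 \rangle \cap \langle \la_2 \rangle$). By Proposition \ref{remHSprodHP}, write $\langle g \rangle = \prod_{i=1}^{p} \langle g_i \rangle$, $\langle h \rangle = \prod_{j=1}^{q} \langle h_j \rangle$, and $\langle f \rangle = \prod_{k=1}^{t} \langle f_k \rangle$ as products of distinct HP-kernels, each generated by an $\scrL$-monomial.

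The first step is the identity $\Skel(K_1 \cap K_2) = \Skel(K_1) \cup \Skel(K_2)$ for any two $\nu$-kernels $K_1, K_2$. The $\supseteq$ direction is immediate. For $\subseteq$, given $\bfa$ outside the right-hand side, pick $p \in K_1$ and $q \in K_2$ whose values at $\bfa$ are not $\nu$-equivalent to $1$; passing to absolute values makes $|p|(\bfa), |q|(\bfa) >_\nu 1$. The lattice infimum $|p| \wedge |q|$, defined by \eqref{starry}, satisfies $1 \le_\nu |p| \wedge |q| \le_\nu |p|, |q|$, so $\nu$-convexity (Remark \ref{bfacts1}(ii)) places it in both $K_1$ and $K_2$, while its value at $\bfa$ is the minimum of $|p|(\bfa), |q|(\bfa)$, which is $>_\nu 1$. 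Hence $\bfa \notin \Skel(K_1 \cap K_2)$.

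Applying this identity yields $\Skel(\langle f \rangle) = \Skel(\langle g \rangle) \cup \Skel(\langle h \rangle)$, with both $\Skel(\langle g \rangle)$ and $\Skel(\langle h \rangle)$ contained in $\Skel(\langle f \rangle)$. All three are affine flats, being intersections of the hyperplanes $\Skel(g_i)$, $\Skel(h_j)$, $\Skel(f_k)$. Since $F$ is divisible, logarithms identify $F^{(n)}$ with an ordered $\mathbb Q$-vector space in which an affine flat cannot be the union of two proper sub-flats (a standard dimension argument). So one of the containments is an equality; without loss of generality $\Skel(\langle g \rangle) = \Skel(\langle f \rangle)$.

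Finally I would upgrade $\langle f \rangle \subseteq \langle g \rangle$ with coinciding $\onenu$-sets to an equality of kernels. For each HP-generator $g_i$, the hyperplane $\Skel(g_i)$ contains $\Skel(\langle g \rangle) = \bigcap_k \Skel(f_k)$, placing the log-vector of the $\scrL$-monomial $g_i$ in the $\mathbb Q$-span of the log-vectors of the $f_k$'s. Divisibility then yields $g_i^s \nucong \prod_k f_k^{n_k}$ for suitable integers $s, n_k$, so $g_i^s \in \langle f \rangle$, and $\nu$-convexity (applied to $1 \le_\nu |g_i| \le_\nu |g_i^s|$) forces $g_i \in \langle f \rangle$. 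Hence $\langle g \rangle \subseteq \langle f \rangle$, yielding equality. I expect the last step---converting the log-span containment into an honest $\nu$-equivalence of $\scrL$-monomials inside $F(\Lambda)$---to be the main obstacle, where Lemma \ref{prop_HP_element_is_a_generator} and the divisibility of $F$ are the essential inputs.
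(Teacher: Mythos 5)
Your approach is geometric and considerably more elaborate than the paper's, which settles the matter in a single paragraph. The paper observes that if both containments $\langle f\rangle\subsetneq\langle g\rangle$ and $\langle f\rangle\subsetneq\langle h\rangle$ were strict, then, since $\langle g\rangle$ and $\langle h\rangle$ are finite products of HP-kernels, one could pick HP-factors $\langle g'\rangle\subseteq\langle g\rangle$ and $\langle h'\rangle\subseteq\langle h\rangle$ not contained in $\langle f\rangle$; yet $\langle g'\rangle\cap\langle h'\rangle\subseteq\langle g\rangle\cap\langle h\rangle=\langle f\rangle$, so $\langle f\rangle$ would be reducible, contradicting Corollary~\ref{cor_HS-decomposition}. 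That argument is pure lattice theory: no $\onenu$-sets, no logarithms, no divisibility. Your route through the Zariski-type identity $\Skel(K_1\cap K_2)=\Skel(K_1)\cup\Skel(K_2)$, the affine-flat picture, and a dimension argument is a legitimate and arguably more geometric alternative---and you correctly flagged the implicit hypothesis that $\langle f\rangle$ itself must be an HS-kernel---but it silently assumes $\Skel(\langle f\rangle)\neq\emptyset$.

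That assumption is a genuine gap. The definition of an HS-fraction permits summands $|f_k|$ whose hyperplanes $\Skel(f_k)$ are inconsistent (for instance $f_1=\la_1$ and $f_2=\alpha\la_1^2$ with $\alpha^\nu\neq\onenu$), and the paper itself inserts ``with $\skel(L)\neq\emptyset$'' as an explicit extra hypothesis in Proposition~\ref{prop_kernel_descending_chain}, precisely because nonemptiness is not automatic for HS-kernels. When $\Skel(\langle f\rangle)=\emptyset$, your decisive step collapses: $\Skel(g_i)\supseteq\Skel(\langle f\rangle)=\emptyset$ is vacuous and places no constraint on the log-vector of the $\scrL$-monomial $g_i$, so you cannot conclude $g_i^{s}\nucong\prod_k f_k^{n_k}$, and the upgrade from $\onenu$-set equality to kernel equality fails; the dimension argument in the previous step also degenerates into a vacuity. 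The paper's lattice-theoretic route via Lemma~\ref{prop_HP_element_is_a_generator} and Corollary~\ref{cor_HS-decomposition} never touches $\onenu$-sets and so avoids this case entirely, which is one concrete payoff of its shorter argument.
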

\begin{proof} Otherwise, since
$\langle g \rangle$ and $\langle h \rangle$ are finite products of
HP-kernels, there are HP-kernel $\langle g' \rangle \subseteq
\langle g \rangle$   and  $\langle h' \rangle \subseteq \langle h
\rangle$   such that $\langle g' \rangle \not \subseteq \langle f
\rangle$ and $\langle h' \rangle \not \subseteq \langle f \rangle$.
But $\langle g' \rangle \cap \langle h' \rangle \subseteq \langle g
\rangle \cap \langle h \rangle = \langle f \rangle$, implying
$\langle f \rangle$ is reducible, contradicting Corollary
\ref{cor_HS-decomposition}.
\end{proof}

\begin{prop} The irreducible kernels in the
  lattice generated by HP-kernels are precisely the HS-kernels.\end{prop}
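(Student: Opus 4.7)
The plan is to prove both directions of the biconditional. Corollary~\ref{cor_HS-decomposition} already establishes that an HS-kernel is irreducible with respect to HP-kernel intersections, and Corollary~\ref{cor_reducibility_of_HS_kernels} upgrades this to HS-kernel intersections. What remains is to extend the ``HS $\Rightarrow$ irreducible'' conclusion to arbitrary $\Omega(F(\Lambda))$-decompositions, and to establish the converse ``irreducible $\Rightarrow$ HS'' direction.

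The common engine for both directions is the observation that every element of $\Omega(F(\Lambda))$ is expressible as a finite intersection of HS-kernels; this follows since HP-kernels are trivially HS-kernels and products of HP-kernels are HS-kernels by Proposition~\ref{remHSprodHP}, so after suitable normalization the generating procedure for $\Omega(F(\Lambda))$ leaves only intersections to take. Granting this normal-form claim, the direction ``irreducible $\Rightarrow$ HS'' is immediate by induction on the number of factors: if $\langle f \rangle = \bigcap_{i=1}^{m} H_i$ with each $H_i$ an HS-kernel, the case $m=1$ gives $\langle f \rangle = H_1$ directly, while for $m \ge 2$ we split $\langle f \rangle = H_1 \cap \bigl(\bigcap_{i=2}^{m} H_i\bigr)$ with both factors in $\Omega(F(\Lambda))$ and apply Lemma~\ref{irred} to conclude $\langle f \rangle = H_1$ (done) or $\langle f \rangle = \bigcap_{i=2}^{m} H_i$ (inductive step on $m$). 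For the converse direction, given an HS-kernel $\langle f \rangle = \langle g \rangle \cap \langle h \rangle$ with $\langle g \rangle = \bigcap_i G_i$ and $\langle h \rangle = \bigcap_j H_j$ in normal form, an inductive upgrade of Corollary~\ref{cor_reducibility_of_HS_kernels} forces $\langle f \rangle$ to coincide with one of the $G_i$ or $H_j$, whence $\langle f \rangle = \langle g \rangle$ or $\langle f \rangle = \langle h \rangle$, so $\langle f \rangle$ is irreducible by Lemma~\ref{irred}.

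The main obstacle is justifying the normal-form claim itself, because $\Omega(F(\Lambda))$ is defined to be closed under \emph{both} intersections and products of HP-kernels, so one must handle mixed expressions such as $(A \cap B) \cdot C$ and reduce them to pure intersections of HS-kernels. I expect distributivity of the kernel product over intersection inside the sublattice of principal kernels (Corollary~\ref{cor_principal_kernels_sublattice}), combined with the structural HO-decomposition (Theorem~\ref{thm_HP_expansion}), to provide the required reduction. Once that normal form is in hand, both halves of the biconditional reduce to the routine inductions described above.
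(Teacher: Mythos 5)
Your proposal is correct and follows essentially the same route as the paper's (one-line) proof, which likewise rests on Corollary~\ref{cor_reducibility_of_HS_kernels} and the tacit fact that every kernel in $\Omega(F(\Lambda))$ is a finite intersection of HS-kernels. You are actually more careful than the paper in explicitly isolating that normal-form claim (via distributivity of $\cdot$ over $\cap$ on principal kernels) as the step that needs justification, whereas the paper simply asserts that ``all proper intersections in the lattice generated by HP-kernels are reducible.''
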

\begin{proof}
 This follows from Corollary \ref{cor_reducibility_of_HS_kernels}, since all proper intersections in the lattice generated by HP-kernels are reducible.
  (Note that HP-kernels are also HS-kernels.)
\end{proof}

\begin{cor}
$\HSpec(F(\Lambda))$ is the family of HS-kernels in
$\Omega(F(\Lambda))$, which is precisely the family of HS-fractions of
$F(\Lambda)$.
\end{cor}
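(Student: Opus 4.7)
The plan is to read the corollary as a bookkeeping consequence of the immediately preceding Proposition together with the definitions. First I would unpack $\HSpec(F(\Lambda))$: by Definition~\ref{hyperspec} and the discussion in the Overview, this is the family of irreducible members of $\Omega(F(\Lambda))$. The preceding Proposition identifies these irreducible members as precisely the HS-kernels lying in $\Omega(F(\Lambda))$, so the first asserted equality is immediate once the definition has been unwound.

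For the second asserted equality, I would argue that ``HS-kernels in $\Omega(F(\Lambda))$'' and ``HS-fractions of $F(\Lambda)$'' really parametrize one another. By Definition~\ref{defn_HO_fraction} (and the preceding HS-kernel definition), an HS-kernel is by fiat a principal kernel of the form $\langle f \rangle$ with $f$ an HS-fraction; conversely, every HS-fraction $f$ generates an HS-kernel $\langle f \rangle$, and by Proposition~\ref{remHSprodHP} this $\langle f \rangle$ is a finite product of distinct HP-kernels, hence belongs to $\Omega(F(\Lambda))$. Thus the assignment $f \mapsto \langle f \rangle$ is a surjection from HS-fractions onto HS-kernels of $\Omega(F(\Lambda))$, and passing to the equivalence $f \sim_{\FF} f'$ that encodes equality of the generated kernel gives the advertised identification.

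There is essentially no obstacle beyond making these identifications explicit; the only point requiring care is that the phrase ``precisely the family of HS-fractions'' should be read modulo the equivalence relation $\sim_{\FF}$ of having the same principal-kernel closure (as used throughout \S\ref{HOdecomp}). Once this convention is in place, no calculation is needed: the corollary is a direct restatement of the previous Proposition together with the definition of HS-kernel and Proposition~\ref{remHSprodHP}.
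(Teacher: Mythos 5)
Your proposal is correct and matches the paper's intent: the paper gives no proof at all for this corollary, treating it as an immediate consequence of the preceding Proposition (irreducible kernels in the lattice generated by HP-kernels are exactly the HS-kernels, and that lattice is $\Omega(F(\Lambda))$ by Definition~\ref{Omega}) together with Definition~\ref{hyperspec}. Your added care in invoking Proposition~\ref{remHSprodHP} to check that HS-kernels actually lie in $\Omega(F(\Lambda))$, and in reading ``precisely the family of HS-fractions'' modulo the $\sim_{\FF}$ identification $f \mapsto \langle f\rangle$, is exactly the bookkeeping the paper leaves implicit.
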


\begin{defn}\label{hyperspec}
The \textbf{hyperspace spectrum} of $F(\Lambda)$, denoted
$\HSpec(F(\Lambda))$, is the family of irreducible kernels in
$\Omega(F(\Lambda))$.
\end{defn}
%

\begin{defn}\label{height0}
A chain $P_0 \subset P_1 \subset \dots \subset P_t$ in
$\HSpec(F(\Lambda))$  of HS-kernels of $F(\Lambda)$ is said to have
\textbf{length}~$t$. An HS-kernel $P$ has \emph{height} $t$ (denoted
$\hgt(P)=t$) if there is a chain of length $t$ in
$\HSpec(F(\Lambda))$ terminating at $P$, but no chain of length
$t+1$ terminates at $P$.
\end{defn}

\begin{rem}\label{rem_correspondence_of_quontient_HSpec}
Let $L$ be a kernel in $\PCon(F(\Lambda))$. Consider the canonical
homomorphism $\phi_L : F(\Lambda) \rightarrow F(\Lambda)/L$. Since
the image of a principal kernel is generated by the image of any of
its generators, $\phi_L(\langle f \rangle)~=~\langle \phi_L(f)
\rangle$ for any HP-kernel $\langle f \rangle$. Choosing $f$ to be
an $\scrL$-monomial,  $\langle \phi_L(f) \rangle$ is a nontrivial
HP-kernel in $ F(\Lambda)/L$ if and only if $\phi_L(f) \not \in F$.
Thus, the set of HP-kernels of $F(\Lambda)$ mapped to HP-kernels of
$F(\Lambda)/L$ is
\begin{equation}\label{eq_subset_of_Omega}
\left\{ \langle g \rangle : \langle g \rangle \cdot \langle F \rangle \supseteq \phi_L^{-1}\left(\langle F \rangle\right) = L \cdot \langle F \rangle \right\}.
\end{equation}
As $\phi_L$ is an   $F$-homomorphism, it respects $\vee, \wedge$ and
$| \cdot |$, and  thus $\phi_{L}( (\Omega (F(\Lambda)), \cap,
\cdot)) = (\Omega (F(\Lambda)/L),\cap, \cdot)$.
 In fact Theorem~\ref{latticecor} yields a correspondence
identifying $\HSpec(F(\Lambda)/L)$ with the subset of
$\HSpec(F(\Lambda))$ which consists of all HS-kernels $P$ of
$F(\Lambda)$ such that   $P \cdot \langle F \rangle \supseteq
L\cdot\langle F \rangle$.  \end{rem}

\begin{lem} The above correspondence extends to a
correspondence identifying $\Omega(F(\Lambda)/L)$ with the
subset~\eqref{eq_subset_of_Omega} of $\Omega(F(\Lambda))$.  Under
this correspondence, the maximal HS-kernels of $F(\Lambda)/L$
correspond to maximal HS-kernels of $F(\Lambda)$, and reducible
kernels of $F(\Lambda)/L$ correspond to reducible kernels of
$F(\Lambda)$. \end{lem}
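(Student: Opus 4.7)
The plan is to bootstrap from Theorem~\ref{latticecor} (the standard lattice isomorphism between kernels of $F(\Lambda)/L$ and kernels of $F(\Lambda)$ containing $L$) together with the $F$-homomorphism properties of $\phi_L$ noted in Remark~\ref{rem_correspondence_of_quontient_HSpec}, which ensure that $\phi_L$ respects $\vee$, $\wedge$, $|\cdot|$, finite products, and finite intersections.

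First I would extend the HS-level correspondence of Remark~\ref{rem_correspondence_of_quontient_HSpec} to all of $\Omega$. Since every element of $\Omega(F(\Lambda))$ is built from HP-kernels by iterated finite intersections and products, and $\phi_L$ respects both operations, $\phi_L$ sends $\Omega(F(\Lambda))$ into $\Omega(F(\Lambda)/L)$. For the converse direction, any $\overline{K}\in\Omega(F(\Lambda)/L)$ is a finite combination of HP-kernels $\langle\phi_L(f_i)\rangle$ with $f_i$ an $\scrL$-monomial in $F(\Lambda)$; forming the same combination of the $\langle f_i\rangle$ inside $F(\Lambda)$ and multiplying by $L$ produces a preimage lying simultaneously in $\Omega(F(\Lambda))$ and in the subset \eqref{eq_subset_of_Omega}. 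Bijectivity then follows from Theorem~\ref{latticecor} once one verifies that two such preimages that coincide after quotienting by $L$ must coincide as elements of \eqref{eq_subset_of_Omega}.

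Next, the assertion about maximality is a direct consequence of the lattice isomorphism. If $K/L$ is a maximal HS-kernel of $F(\Lambda)/L$, then any HS-kernel $K\subsetneq K'$ of $F(\Lambda)$ with $K'$ in \eqref{eq_subset_of_Omega} would descend under $\phi_L$ to an HS-kernel strictly between $K/L$ and $F(\Lambda)/L$, contradicting maximality; the reverse direction is symmetric.

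For reducibility I would apply Lemma~\ref{irred}. If $K/L=(G/L)\cap(H/L)$ with $G/L,H/L\neq K/L$, the lattice correspondence immediately yields $K=G\cap H$ with $G,H\neq K$, so $K$ is reducible. Conversely, if $K$ is reducible as $K=G\cap H$ with $G,H\in\Omega(F(\Lambda))$ strictly larger than $K$, then $G,H\supseteq K\supseteq L$, so both lie in \eqref{eq_subset_of_Omega}, and quotienting produces a nontrivial decomposition $K/L=(G/L)\cap(H/L)$. The main obstacle is the first paragraph: one has to verify that the preimage under $\phi_L$ of a principal kernel generated by the image of an $\scrL$-monomial $f$ is precisely $\langle f\rangle\cdot L$, and that this lift is well-defined on finite intersections and products modulo $L$. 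This is a careful bookkeeping exercise tracking how the lattice correspondence of Theorem~\ref{latticecor} interacts with the additional lattice operations that define $\Omega$, but nothing deeper is needed beyond the already established compatibility of $\phi_L$ with $\cap$ and $\cdot$.
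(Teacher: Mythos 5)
Your arguments for maximality and reducibility are sound; for maximality you take a slightly different route from the paper (you argue directly with chains and the lattice isomorphism, while the paper invokes Theorem~\ref{thm_nother_1_and_3}(3) to get $(F(\Lambda)/L)/(P/L)\cong F(\Lambda)/P$ and then reads off that simplicity, hence maximality, is preserved), and for reducibility both you and the paper observe that $\cap$ (equivalently $\wedge$) is preserved by $\phi_L$ and by its lattice inverse.

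The gap is in your first paragraph. You treat the correspondence as if it were the standard Theorem~\ref{latticecor} bijection between kernels of $F(\Lambda)/L$ and kernels of $F(\Lambda)$ containing $L$, and accordingly propose multiplying a lift by $L$ to land in the subset. But the set~\eqref{eq_subset_of_Omega} does not consist of preimages $K\cdot L$; it consists of kernels $\langle g\rangle\in\Omega(F(\Lambda))$ satisfying $\langle g\rangle\cdot\langle F\rangle\supseteq L\cdot\langle F\rangle$, and these need not contain $L$ at all. Moreover $K\cdot L$ generally fails to lie in $\Omega(F(\Lambda))$, since $L$ is an arbitrary principal kernel and need not be built from HP-kernels. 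So the element of~\eqref{eq_subset_of_Omega} corresponding to $\overline K=\phi_L(K)$ is $K$ itself (appropriately normalized), not $\phi_L^{-1}(\overline K)=K\cdot L$, and ``bijectivity follows from Theorem~\ref{latticecor}'' is not quite right: you still owe an argument that distinct elements of~\eqref{eq_subset_of_Omega} have distinct images under $\phi_L$, which Theorem~\ref{latticecor} alone does not give because its injectivity is on the family of kernels containing $L$. The paper is itself terse here and leans on the preceding Remark~\ref{rem_correspondence_of_quontient_HSpec} (specifically on $\phi_L(\Omega(F(\Lambda)),\cap,\cdot)=(\Omega(F(\Lambda)/L),\cap,\cdot)$) rather than re-proving this, but the point is that the mechanism is the restriction of $\phi_L$ to the subset~\eqref{eq_subset_of_Omega}, not the $L$-multiplication lift you propose.
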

\begin{proof} The latter assertion is obvious since $\wedge$
is preserved under homomorphisms. For the first assertion,
$(F(\Lambda)/L)/(P/L) \cong F(\Lambda)/P$ by
Theorem~\ref{thm_nother_1_and_3}, so simplicity of the quotients is
preserved. Hence, so is maximality of  $P/L$ and $P$.
\end{proof}

\begin{defn}\label{defn_Hdim}
The Hyperdimension of $F(\Lambda)$, written $\Hdim F(\Lambda)$ (if
it exists), is the maximal height of the HS-kernels in $F(\Lambda)$.
\end{defn}

\subsection{Decompositions}$ $

Let us garner some information about reducible kernels from
rational functions. Suppose $f\in F(\Lambda)$. We write $f =
\sum_{i=1}^{k}f_i$ where each $f_i$ is of the form $ {g_i}{h_i}^*$
with $g_i,h_i \in F[\Lambda]$ and $g_i$  a monomial. (This is
$\frac{g_i}{h_i}$ when $h_i$ is tangible.) We also assume that
this sum is \textbf{irredundant} in the sense that we cannot
remove any of the summands and still get $f$.
If each time the value $1$ is attained by one of the terms $f_i$ in this expansion and all other terms attain values   $\le 1$, then $\tilde{f} = \bigwedge_{i=1}^{k}|f_i|$  defines the same $\onenu$-set as $f$. Moreover, if $f \in \langle F \rangle$ then $\tilde{f} \wedge |\alpha| \in \langle F \rangle$, for $\alpha \in F \setminus \{1\}$  is also a generator of $\langle f \rangle$. The reason we take $\tilde{f} \wedge |\alpha|$ is that we have no guarantee that each of the $f_i$'s in the above expansion is bounded.\\
We can generalize this idea as follows:

 We call $f\in F(\Lambda)$
\textbf{reducible} if we can write $f = \sum_{i=1}^{k}f_i$ as
above, such that for every $1 \leq i \leq k$ the following
condition holds:
\begin{equation*}
f_i(\bfa) \nucong 1  \Rightarrow  \ \ f_j(\bfa) \le _{\nu} 1, \
\forall j \neq i.
\end{equation*}

\begin{defn}\label{defn_decomposition}
Let $f \in F(\Lambda)$. A $\Theta$-\textbf{decomposition} of $f$
is an expression of the form
\begin{equation}\label{eq_decomposition}
|f| = |u| \wedge |v|
\end{equation}
with $u,v $ $\Theta$-elements in $F(\Lambda)$.

The decomposition \eqref{eq_decomposition} is said to be
\emph\textbf{trivial} if  \ $f \sim_{\FF} u$ \ or \ $f \sim_{\FF}
v$ (equivalently $|f|~\sim_{\FF}~|u|$ or $|f|~\sim_{\FF}~|v|$).
 Otherwise, the decomposition  is said to be \textbf{nontrivial}.
\end{defn}
%
%
%

\begin{lem}\label{lem_reducibility_and_decomposition}
Suppose $f \in F(\Lambda)$ is a $\Theta$-element. Then $\langle f
\rangle$ is reducible if and only if there exists some generator
$f'$ of $\langle f \rangle$  that has a nontrivial
$\Theta$-decomposition.
\end{lem}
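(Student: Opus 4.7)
The plan is to translate between two different presentations of the same structural fact: that the principal kernel $\langle f \rangle$ splits as an intersection of two strictly larger kernels (reducibility in the sense of Lemma~\ref{irred}), and that some absolute-value generator $|f'|$ splits as a lattice infimum $|u| \wedge |v|$ with neither factor $\FF$-equivalent to $f'$ (nontrivial $\Theta$-decomposition). The bridge is the identity
\[
\langle |u| \wedge |v| \rangle = \langle u \rangle \cap \langle v \rangle
\]
for $u,v$ ranging over $\Theta$-elements. This is precisely the statement that, restricted to principal kernels of an idempotent \semifield0, the lattice operation $\wedge$ on generators corresponds to intersection of kernels; inside $\Omega(F(\Lambda))$ it follows from Corollary~\ref{cor_principal_kernels_sublattice} together with the description in Remark~(ii) after Construction~\ref{HO_construction}.

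For the direction $(\Leftarrow)$ I would assume a generator $f'$ of $\langle f \rangle$ with $|f'| = |u| \wedge |v|$ nontrivial. Then $\langle f \rangle = \langle f' \rangle = \langle u \rangle \cap \langle v \rangle$, with $\langle u \rangle, \langle v \rangle \in \Omega(F(\Lambda))$. Nontriviality says $f' \not\sim_{\FF} u$ and $f' \not\sim_{\FF} v$, i.e.\
\[
\langle f \rangle \cap \FF \ne \langle u \rangle \cap \FF, \qquad \langle f \rangle \cap \FF \ne \langle v \rangle \cap \FF,
\]
and hence in particular $\langle f \rangle \ne \langle u \rangle$ and $\langle f \rangle \ne \langle v \rangle$. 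Lemma~\ref{irred} then gives reducibility of $\langle f \rangle$.

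For $(\Rightarrow)$ I would start with a reducible $\langle f \rangle$ and invoke Lemma~\ref{irred} to write $\langle f \rangle = \langle g \rangle \cap \langle h \rangle$ with $\langle f \rangle \neq \langle g \rangle$ and $\langle f \rangle \neq \langle h \rangle$. Since $f$ is a $\Theta$-element, so are $g$ and $h$ (or we may replace them by $\Theta$-elements generating the same kernels, using that $\Omega(F(\Lambda))$ is a sublattice). Applying Remark~(ii) after Construction~\ref{HO_construction} to the HO-decomposition of $\langle f \rangle$ with $q=2$ pieces (or, equivalently, Corollary~\ref{cor_principal_kernels_sublattice} together with the fact that $|f'| \in \langle g \rangle \cap \langle h \rangle$ iff $|f'| \ge_\nu$ a common power of $|g'|$ and $|h'|$), I can pick generators $f', u, v$ of $\langle f \rangle, \langle g \rangle, \langle h \rangle$ respectively with $|f'| = |u| \wedge |v|$. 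Nontriviality is forced: if $f' \sim_{\FF} u$ then $\langle f \rangle$ and $\langle u \rangle$ would agree on $\FF$, and combined with $\langle f \rangle \subseteq \langle u \rangle$ and the analysis in~$\Omega$ this gives $\langle f \rangle = \langle u \rangle = \langle g \rangle$, contradicting the choice of $g$; symmetrically for $v$.

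The main obstacle is the careful passage from the kernel identity $\langle f \rangle = \langle g \rangle \cap \langle h \rangle$ to the \emph{generator} identity $|f'| = |u| \wedge |v|$ for matched choices of generators. The abstract existence is clear, but one must verify that the generators can be chosen simultaneously so that the lattice equation holds on the nose (not just up to $\sim_{\FF}$); this is exactly the content of Remark~(ii) after Construction~\ref{HO_construction}, and citing it is the cleanest way to sidestep a direct manipulation of $|k_i| \wedge |\alpha|$ type generators.
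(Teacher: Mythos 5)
Your proof follows essentially the same route as the paper's: both directions reduce to the lattice identity $\langle u \rangle \cap \langle v \rangle = \langle |u| \wedge |v| \rangle$ (Remark~\ref{rem_wedge_equivalence}) combined with the characterization of reducibility in Lemma~\ref{irred}, and the nontriviality clause is unwound via the definition of $\sim_\FF$. The one place where you overcomplicate is the forward direction: there is no need to ``pick generators simultaneously'' or to appeal to Remark~(ii) after Construction~\ref{HO_construction} --- once $\langle f \rangle = \langle g \rangle \cap \langle h \rangle$ you may simply set $f' := |g| \wedge |h|$, which is \emph{automatically} a generator of $\langle f\rangle$ because $\langle f' \rangle = \langle g \rangle \cap \langle h \rangle = \langle f \rangle$; then $f' \sim_\FF g$ would force $\langle f \rangle = \langle g \rangle$ (all kernels involved lying in $\PCon(\langle F\rangle)$, where $\sim_\FF$ collapses to equality of kernels), so nontriviality is immediate.
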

\begin{proof}
If $\langle f \rangle$ is reducible , then there exist
 kernels $\langle u \rangle$ and $\langle v \rangle$ in
$\Theta$ such that $\langle f \rangle = \langle u \rangle \cap
\langle v \rangle$ where $\langle f \rangle \neq \langle u
\rangle$ and $\langle f \rangle \neq \langle v \rangle$. Since
$\langle u \rangle \cap \langle v \rangle = \langle
|u|~\wedge~|v|\rangle$ we have the nontrivial
$\Theta$-decomposition $f' = |u| \wedge |v|$ (which is a generator
of $\langle f \rangle$).

Conversely, assume that $f' = |u| \wedge |v|$ is a nontrivial
$\Theta$- decomposition for some $f' \sim_{\FF} f$. Then $\langle
f \rangle = \langle f' \rangle = \langle |u| \wedge |v| \rangle =
\langle u \rangle \cap \langle v \rangle$. Since the decomposition
$f' = |u| \wedge |v|$ is nontrivial, we have that $u \not
\sim_{\FF} f'$ and $v \not \sim_{\FF} f'$, and thus $\langle |u|
\rangle = \langle u \rangle \neq \langle f' \rangle = \langle f
\rangle$. Similarly, $\langle v \rangle \neq \langle f \rangle$.
Thus, by definition, $\langle f \rangle$ is reducible.
\end{proof}

\begin{flushleft} We can equivalently rephrase Lemma \ref{lem_reducibility_and_decomposition} as follows: \end{flushleft}
\begin{rem}
$f$ is reducible if and only if   some $f' \sim_{\FF} f$ has a
nontrivial $\Theta$-decomposition.
\end{rem}

A  question immediately arising from Definition
\ref{defn_decomposition} and
Lemma~\ref{lem_reducibility_and_decomposition} is:

If $f \in F(\Lambda)$ has a nontrivial $\Theta$-decomposition and
$g \sim_{\FF} f$, does $g$ also have a nontrivial
$\Theta$-decomposition? If so, how is this pair of decompositions
related?

In the next few paragraphs we provide an answer to both of these
questions, for  $\Theta = \PCon(\langle \mathscr{R} \rangle)$.

\begin{rem}
$\sum_{i=1}^{k}s_i(a_i \wedge b_i)^{d(i)} =
\left(\sum_{i=1}^{k}s_i a_i^{d(i)} \right) \wedge
\left(\sum_{i=1}^{k}s_i b_i^{d(i)}\right),$ $ \forall s_1, ...,
s_k , a_1,....,a_k  , b_1,....,b_k \in F(\Lambda)$,  and  $d(i)
\in \mathbb{N}_{\geq 0}$.
\end{rem}

\begin{rem}
If $h_1,...,h_k \in F(\Lambda)$   such that each $h_i \ge_{\nu}
1$, then $\sum_{i=1}^{k}s_ih_i \ge_{\nu}  1 $ for every
$s_1,...,s_k \in F(\Lambda)$ such that $\sum_{i=1}^{k}s_i \nucong
1$.
\end{rem}

\begin{thm}\label{thm_every_gen_is_reducible}
(For $\Theta = \PCon(\langle \mathscr{R} \rangle)$.) If $\langle f
\rangle$ is a (principal) reducible kernel, then there exist
$\Theta$-elements $g,h \in F(\Lambda)$ such that $|f| = |g| \wedge
|h|$ and  $|f| \not \sim_{\FF} |g|,|h|$.
\end{thm}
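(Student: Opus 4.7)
The plan is to use Lemma~\ref{lem_reducibility_and_decomposition} to extract a nontrivial $\Theta$-decomposition of some generator of $\langle f\rangle$, and then to ``rescale'' it to obtain a nontrivial $\Theta$-decomposition of $|f|$ itself. Explicitly, reducibility of $\langle f\rangle$ supplies (via that lemma) $\Theta$-elements $u,v$ with $\langle u\rangle,\langle v\rangle\ne\langle f\rangle$ and $\langle f\rangle=\langle u\rangle\cap\langle v\rangle=\langle|u|\wedge|v|\rangle$; set $f^{*}:=|u|\wedge|v|$ and propose
\[
g\;:=\;\frac{|f|\cdot|u|}{|u|\wedge|v|},\qquad h\;:=\;\frac{|f|\cdot|v|}{|u|\wedge|v|}.
\]

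The required identity is immediate from distributivity of multiplication over $\wedge$ in the lattice-ordered abelian group underlying $F(\Lambda)$:
\[
g\wedge h\;=\;\frac{|f|}{f^{*}}\bigl(|u|\wedge|v|\bigr)\;=\;\frac{|f|}{f^{*}}\,f^{*}\;=\;|f|.
\]
Since $|u|,|v|\ge_{\nu}f^{*}$, the factors $|u|/f^{*}$ and $|v|/f^{*}$ are $\ge_{\nu}1$, giving $g,h\ge_{\nu}|f|\ge_{\nu}1$; hence $g=|g|$, $h=|h|$, and the identity reads $|f|=|g|\wedge|h|$. Moreover $|f|,|u|,|v|\in\langle F\rangle$ (as $f,u,v$ are $\Theta$-elements) and $f^{*}\in\langle F\rangle$ by the convexity in Remark~\ref{bfacts}(i) applied to $1\le_{\nu}f^{*}\le_{\nu}|u|$; since $\langle F\rangle$ is a multiplicative subgroup closed under products and inverses, $g,h\in\langle F\rangle$, so they are $\Theta$-elements.

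The only non-routine step is nontriviality. A pointwise inspection yields $g(\bfa)=1$ iff $|f|(\bfa)=1$ and $|u|(\bfa)\le_{\nu}|v|(\bfa)$, equivalently $\bfa\in\Skel(u)$; thus $\Skel(g)=\Skel(u)$, and symmetrically $\Skel(h)=\Skel(v)$. Suppose for contradiction that $|f|\sim_{\FF}|g|$; since $f,g\in\langle F\rangle$, this forces $\langle g\rangle=\langle f\rangle$, hence $\Skel(u)=\Skel(g)=\Skel(f)=\Skel(u)\cup\Skel(v)$, and the Zariski-type correspondence for $\Theta$-kernels (i.e., injectivity of $\Skel$ on $\Theta$) then forces $\langle u\rangle=\langle f\rangle$, contradicting $\langle u\rangle\ne\langle f\rangle$. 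The symmetric argument yields $|f|\not\sim_{\FF}|h|$. The main obstacle is precisely this last appeal to Zariski injectivity on the sublattice~$\Theta$; everything else is a short lattice-group manipulation.
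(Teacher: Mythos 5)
Your proof is correct, and it takes a genuinely different route from the paper's. Both proofs begin by invoking Lemma~\ref{lem_reducibility_and_decomposition} to get a generator $f'=|u|\wedge|v|$ of $\langle f\rangle$ with $u,v$ $\Theta$-elements and $f'\not\sim_{\FF}|u|,|v|$. From there you diverge: you define $g$ and $h$ by the explicit lattice-group rescaling $g=|f|\,|u|/(|u|\wedge|v|)$, $h=|f|\,|v|/(|u|\wedge|v|)$, and obtain $|f|=|g|\wedge|h|$ in one line from distributivity of multiplication over $\wedge$; the paper instead uses Remark~\ref{bfacts}(vi) to write $|f|=\sum_i s_i (f')^{d(i)}$ with $\sum s_i=1$ and then distributes $\wedge$ across this sum term-by-term, setting $g=\sum_i s_i|u|^{d(i)}$, $h=\sum_i s_i|v|^{d(i)}$. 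After that both arguments reduce to showing $\Skel(g)=\Skel(u)$ and $\Skel(h)=\Skel(v)$ and appealing to injectivity of $\Skel$ on $\PCon(\langle F\rangle)$ (the Zariski-type correspondence implicit in the HO-decomposition machinery of Construction~\ref{HO_construction}). Your $\Skel$ computation is a clean two-case split on whether $|u|(\bfa)\le_\nu|v|(\bfa)$, which is noticeably simpler than the paper's dominant-term analysis; the paper's construction has the compensating merit that it stays inside the ``finite sums with $\sum s_i=1$'' description of principal kernels, so membership $g,h\in\langle u\rangle,\langle v\rangle\subseteq\langle F\rangle$ is immediate, whereas you needed the separate (correct) order-convexity argument from Remark~\ref{bfacts}(i) to place $g,h$ in $\langle F\rangle$. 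The single non-routine appeal you flag — that $\Skel$ is injective on $\PCon(\langle F\rangle)$ — is exactly the same ingredient the paper uses (``$\Skel(g)=\Skel(u)$, which implies that $g$ is a generator of $\langle u\rangle$''), so you have not introduced any new hypothesis.
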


\begin{proof}
If $\langle f \rangle$ is a principal reducible kernel, then there exists $f' \sim_{\FF} f$
such that $f' = |u| \wedge |v| = \min(|u|,|v|)$ for $\Theta$-elements $u,v \in \langle \mathscr{R} \rangle $
with $f'~\not\sim_{\FF}~|u| , |v|$. Then $|f| \in \langle f' \rangle$ since $f'$ is a generator of $\langle f \rangle$,
 so there exist   $s_1,...,s_k \in F(\Lambda)$ such that $\sum_{i=1}^{k}s_i = 1$
 and $|f| = \sum_{i=1}^{k}s_i (f')^{d(i)}$ with $d(i) \in \mathbb{N}_{\geq 0}$. ($d(i) \geq 0$ since $|f| \ge_{\nu}
 1$.)
Thus $$f = \sum_{i=1}^{k}s_i (|u| \wedge |v|)^{d(i)} =
\sum_{i=1}^{k}s_i (\min(|u|,|v|))^{d(i)} =
\min\left(\sum_{i=1}^{k}s_i|u|^{d(i)},\sum_{i=1}^{k}s_i|v|^{d(i)}\right)
=|g| \wedge |h|$$ where $g = |g| = \sum_{i=1}^{k}s_i|u|^{d(i)}, h
= |h| = \sum_{i=1}^{k}s_i|v|^{d(i)}$.

Now   $\langle |f| \rangle \subseteq \langle |g| \rangle \subseteq
\langle |u| \rangle$  and $\langle |f| \rangle \subseteq  \langle
|h| \rangle \subseteq \langle |v| \rangle$, implying $\Skel(f)
\supseteq \Skel(g) \supseteq \Skel(u)$ and $\Skel(f) \supseteq
\Skel(h) \supseteq \Skel(v)$.

We claim that $|g|$ and $|h|$ generate $\langle |u| \rangle $ and
$\langle |v| \rangle$, respectively. Indeed, $\Skel(f')
=\Skel(|f|)$, since $f' \sim_{\FF} |f|$ and thus for any $\bfa \in
F^{(n)}$, $f'(\bfa)=1 \Leftrightarrow |f|(\bfa)=1$. Let $s_j
(f')^{d(j)}$ be a dominant term of $|f|$ at $\bfa$, i.e.,
$$|f| \nucong \sum_{i=1}^{k}s_i(\bfa) (f'(\bfa))^{d(i)} \nucong s_j(\bfa) (f'(\bfa))^{d(j)}.$$
Then   $f(\bfa) \nucong 1 \Leftrightarrow s_j(\bfa)
(f'(\bfa))^{d(j)}\nucong1$. If $f'(\bfa) \nucong 1 $, then
$(f'(\bfa))^{d(j)}\nucong1$, so  $s_j(\bfa) \nucong 1$. Now, for
$\bfa \in \Skel(g)$. Then we have $$g(\bfa) \nucong
\sum_{i=1}^{k}s_i|u|^{d(i)} \nucong 1.$$ Let $s_t|u|^{d(t)}$ be a
dominant term of $g$ at $\bfa$. If $s_t(\bfa) \nucong1$ then
$|u|^{d(t)} \nucong 1$ and thus $u = 1$, and $\bfa \in \Skel(u)$.
Otherwise $s_t(\bfa) <_\nu 1$ (since $\sum_{i=1}^{k}s_i \nucong
1$) and so, by the above, $s_t (f')^{d(t)}$ is not a dominant term
of $|f|$ at $\bfa$. Thus,  for every index j of a dominant term of
$|f|$ at $\bfa$, we have $j \ne t$ and
$$|u(\bfa)|^{d(j)} \nucong s_j(\bfa)|u(\bfa)|^{d(j)} < _\nu s_t(\bfa)|u(\bfa)|^{d(t)} \nucong g(\bfa) \nucong 1.$$

\begin{equation}\label{eq_1_thm_every_gen_is_reducible}
s_j(\bfa)(f'(\bfa))^{d(j)}\nucong s_j(\bfa) (|u|(\bfa) \wedge
|v|(\bfa))^{d(j)} \leq s_j(\bfa)|u(\bfa)|^{d(j)} <_\nu 1.
\end{equation}
On the other hand, $f'(\bfa)\nucong1$ since $\Skel(f) \supseteq
\Skel(g)$, implying $s_j(\bfa)(f'(\bfa))^{d(j)}\nucong1$,
contradicting \eqref{eq_1_thm_every_gen_is_reducible}. Hence,
$\Skel(g) \subseteq \Skel(u)$, yielding $\Skel(g) = \Skel(u)$,
which implies that $g$ is a generator of $\langle |u| \rangle =
\langle u \rangle$. The proofs for $h$ and $|v|$ are analogous.

Consequently,   $g \sim_{\FF} |g| \sim_{\FF} |u|$ and $h
\sim_{\FF} |h| \sim_{\FF} |v|$. Since $|f|~\sim_{\FF}~f' \not
\sim_{\FF} |u| , |v|$ we conclude that $|f| \not \sim_{\FF}
|g|,|h|$.
\end{proof}

\begin{cor}\label{cor_generator_structure}
For $f \in \langle F \rangle$, if $|f| = \bigwedge_{i=1}^{s}
|f_i|$  for   $f_i \in \langle F \rangle$, then for any $g
\sim_{\FF} f$, we have $|g| = \bigwedge_{i=1}^{s} |g_i|$, with
$g_i \sim_{\FF}  f_i$ for $i = 1,...,s$.
\end{cor}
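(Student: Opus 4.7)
The plan is to mimic the algebraic computation from the proof of Theorem~\ref{thm_every_gen_is_reducible}, with $s$ factors in place of two; equivalently, one can induct on $s$ using that theorem as the base case. Since $f\in\langle F\rangle$, the relation $g\sim_{\FF}f$ gives $\langle g\rangle\cap\langle F\rangle=\langle f\rangle$, so $|g|\in\langle|f|\rangle$. By Proposition~\ref{prop_ker_stracture_by group} (in the form used in Theorem~\ref{thm_every_gen_is_reducible}), I would write
\[
|g|=\sum_{j=1}^{k}s_j\,|f|^{d(j)},\qquad s_j\in F(\Lambda),\ \ \sum_{j}s_j=1,\ \ d(j)\in\mathbb{N}_{\ge 0},
\]
where nonnegativity of the $d(j)$ is available because $|g|\ge_\nu 1=|f|^{0}$.

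Next I would substitute $|f|=\bigwedge_{i=1}^{s}|f_i|$ and use the two identities recorded just before Theorem~\ref{thm_every_gen_is_reducible}: namely $(\bigwedge_i a_i)^{d}=\bigwedge_i a_i^{d}$ (valid since each $|f_i|\ge_\nu 1$) and the distributivity $\sum_{j}s_j\bigwedge_{i}x_{i,j}=\bigwedge_{i}\sum_{j}s_j x_{i,j}$. This gives
\[
|g|=\sum_{j}s_j\Bigl(\bigwedge_{i}|f_i|\Bigr)^{d(j)}=\bigwedge_{i=1}^{s}\Bigl(\sum_{j}s_j\,|f_i|^{d(j)}\Bigr)=:\bigwedge_{i=1}^{s}|g_i|.
\]
Each $|g_i|=\sum_j s_j|f_i|^{d(j)}$ with $\sum s_j=1$ lies in $\langle f_i\rangle$ by Proposition~\ref{prop_ker_stracture_by group}, so at least $\Skel(g_i)\supseteq\Skel(f_i)$.

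The main obstacle is the reverse inclusion $\Skel(g_i)\subseteq\Skel(f_i)$, which is precisely where the delicate dominant-term analysis at the close of Theorem~\ref{thm_every_gen_is_reducible} is needed. I would repeat that argument verbatim with $|u|$ replaced by $|f_i|$: for $\bfa\in\Skel(g_i)$, choose a dominant term $s_t\,|f_i|^{d(t)}$ of $g_i$ at $\bfa$; if $s_t(\bfa)\nucong 1$ then $|f_i(\bfa)|\nucong 1$, placing $\bfa\in\Skel(f_i)$; otherwise $s_t(\bfa)<_\nu 1$, and the chain~\eqref{eq_1_thm_every_gen_is_reducible}, with $|u|$ replaced by $|f_i|$, contradicts $f(\bfa)\nucong 1$ (which does hold since $\Skel(g)\subseteq\Skel(f)$). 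This yields $\Skel(g_i)=\Skel(f_i)$, hence $g_i\sim_{\FF}f_i$. The reason this step is the genuine difficulty is the bookkeeping required to track which of the $s$ factors realizes the minimum at each $\bfa$; this is also what makes the inductive approach attractive, since it confines the delicate case to the two-factor analysis already carried out in Theorem~\ref{thm_every_gen_is_reducible}.
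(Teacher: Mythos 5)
Your proposal is correct and takes essentially the same approach as the paper: the paper's entire proof is ``Iterate Theorem~\ref{thm_every_gen_is_reducible},'' and what you have done is unroll that iteration into a single $s$-fold computation (writing $|g|=\sum_j s_j|f|^{d(j)}$, distributing over $\bigwedge_{i=1}^{s}|f_i|$, and then re-running the dominant-term argument for each $i$), while explicitly noting the inductive reorganization as an equivalent route.
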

\begin{proof}
Iterate Theorem \ref{thm_every_gen_is_reducible}.
\end{proof}
%

%

\ \\

\begin{cor}\label{cor_norm_decomposition}
If $\langle f \rangle$ is a kernel in $\Theta$, then $\langle f
\rangle$ has a nontrivial decomposition $\langle f \rangle =
\langle g \rangle \cap \langle h \rangle$ if and only if $|f|$ has
a nontrivial decomposition $|f|~=~|g'|~\wedge~|h'|$ with $|g'|
\sim_{\FF} g$ and $|h'| \sim_{\FF} h$.
\end{cor}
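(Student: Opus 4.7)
The plan is to deduce this corollary directly from Theorem~\ref{thm_every_gen_is_reducible}, essentially as a translation between the kernel-level statement ``$\langle f\rangle = \langle g\rangle \cap \langle h\rangle$'' and the function-level statement ``$|f| = |g'|\wedge |h'|$'', the only subtle point being that the generators on the two sides agree only up to $\sim_{\FF}$.

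For the forward implication, assume $\langle f\rangle = \langle g\rangle \cap \langle h\rangle$ is a nontrivial $\Theta$-decomposition. Since $\langle g\rangle \cap \langle h\rangle = \langle |g|\wedge |h|\rangle$, the element $f^{\ast}:=|g|\wedge|h|$ is a generator of $\langle f\rangle$, which witnesses reducibility, and in particular $f^{\ast}\sim_{\FF} f$. Now apply Theorem~\ref{thm_every_gen_is_reducible} taking the auxiliary generator to be $f^{\ast}=|u|\wedge|v|$ with $u=g,\ v=h$. The construction in that proof then produces explicit $\Theta$-elements $g'=\sum_i s_i|g|^{d(i)}$ and $h'=\sum_i s_i|h|^{d(i)}$ satisfying $|f|=|g'|\wedge|h'|$, and its final paragraph shows that $g'$ generates $\langle g\rangle$ and $h'$ generates $\langle h\rangle$ (modulo~$\FF$). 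This is precisely $|g'|\sim_{\FF} g$ and $|h'|\sim_{\FF} h$, and nontriviality on the function side follows from $|f|\not\sim_{\FF}|g|,|h|$ together with these equivalences.

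For the backward implication, assume $|f|=|g'|\wedge|h'|$ with $|g'|\sim_{\FF} g$, $|h'|\sim_{\FF} h$, and nontriviality $|f|\not\sim_{\FF}|g'|,|h'|$. Then
\[
\langle f\rangle = \langle |f|\rangle = \langle |g'|\wedge|h'|\rangle = \langle g'\rangle\cap\langle h'\rangle.
\]
Intersecting both sides with $\FF$ (and using $\langle f\rangle\subseteq \FF$ because $\langle f\rangle\in\Theta$), the definition of $\sim_{\FF}$ gives $\langle g'\rangle\cap\FF = \langle g\rangle\cap\FF$ and $\langle h'\rangle\cap\FF = \langle h\rangle\cap\FF$, and so
\[
\langle f\rangle = (\langle g\rangle\cap\FF)\cap(\langle h\rangle\cap\FF) = \langle g\rangle\cap\langle h\rangle,
\]
the last equality being the identification within the lattice of subkernels of $\FF$. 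Nontriviality of this kernel decomposition transfers from $|f|\not\sim_{\FF}|g'|,|h'|$ via the given equivalences.

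The only real obstacle is the careful bookkeeping with $\sim_{\FF}$: on the function side the decomposing summands are determined only up to $\sim_{\FF}$, not up to equality, and so one has to verify that the explicit $g',h'$ produced by Theorem~\ref{thm_every_gen_is_reducible} are in the $\sim_{\FF}$-class of the originally given $g,h$. This was already established inside the proof of that theorem (the paragraph showing $\Skel(g)=\Skel(u)$ and hence that $g$ generates $\langle u\rangle$), so here it only needs to be cited, not redone.
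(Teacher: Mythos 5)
Your proof is correct and takes essentially the same route as the paper: the paper proves the ``function decomposition implies kernel decomposition'' direction by the same one-line algebraic chain, and disposes of the other direction by citing the proof of Theorem~\ref{thm_every_gen_is_reducible}, exactly as you do. You spell out more explicitly than the paper the step where $\sim_\FF$ is converted into genuine equality of kernels (by intersecting with $\FF$ and noting everything in sight lies in $\PCon(\FF)$), and you carefully check that the $g',h'$ produced by the theorem's construction land in the right $\sim_\FF$-classes; both are details the paper leaves implicit.
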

\begin{proof}
If $|f| = |g'| \wedge |h'|$  then, since $|g'| \sim_{\FF} g$ and
$|h'| \sim_{\FF} h$ we have $$\langle f \rangle  = \langle |f|
\rangle =  \langle |g'| \wedge |h'| \rangle = \langle |g'| \rangle
\cap \langle  |h'| \rangle = \langle g \rangle \cap \langle h
\rangle.$$ The converse is seen as in  the proof of
Theorem~\ref{thm_every_gen_is_reducible}.
\end{proof}

Corollary \ref{cor_norm_decomposition} provides a
$\Theta$-decomposition of $|f|$, for every generator $f$ of a
reducible kernel in $\Theta$.

\begin{rem}\label{rem_wedge_equivalence}
By \cite[Corollary~4.1.25]{Kern},
$$\langle f \rangle \cap \langle g \rangle = \langle (f + f^*) \wedge (g + g^*)\rangle  = \langle |f| \wedge |g| \rangle.$$
But, in fact, $\langle f \rangle \cap \langle g \rangle = \langle
f' \rangle \cap \langle g' \rangle$  for any  $g' \sim_{\FF} g$
and $h' \sim_{\FF} h$, so we could   take  $|g'| \wedge |f'|$
instead of $|g| \wedge |f|$ on the righthand side of the equality,
e.g., $\langle |f^k| \wedge |g^m| \rangle$ for any $m,k \in
\mathbb{Z} \setminus \{ 0 \}$.
\end{rem}

\begin{defn}
Let $\mathbb{S}$ be a semifield and let $a,b \in \mathbb{S}$. We
say that $a$ and $b$ are $\FF$-\textbf{comparable} if  there exist
some $a' \sim_\FF a$ and $b' \sim_\FF b$ such that  $|a'| \leq |
b'| $ or $|b'| \leq |a'|$.
\end{defn}

Since $|g| \wedge |h| = \min(|g|,|h|)$ we can utilize Remark
\ref{rem_wedge_equivalence} to get the following observation:

\begin{prop}
A $\Theta$-decomposition $f  \sim_{\FF} |g| \wedge |h| \in
F(\Lambda)$ is nontrivial if and only if the $\Theta$-elements $g$
and $h$ are not $\FF$-comparable.
\end{prop}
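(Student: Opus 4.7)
The plan is to prove the contrapositive: the $\Theta$-decomposition $f \sim_{\FF} |g| \wedge |h|$ is trivial if and only if $g$ and $h$ are $\FF$-comparable. For the easy direction (comparable implies trivial), assume without loss of generality that $|g'| \leq |h'|$ with $g' \sim_\FF g$ and $h' \sim_\FF h$. Since $\wedge$ is the lattice infimum (formula \eqref{starry}), $|g'| \wedge |h'| = |g'|$, and Remark~\ref{rem_wedge_equivalence} gives
\[
\langle g \rangle \cap \langle h \rangle \;=\; \langle g' \rangle \cap \langle h' \rangle \;=\; \langle |g'| \wedge |h'| \rangle \;=\; \langle g' \rangle.
\]
Intersecting with $\FF$ and invoking $g' \sim_\FF g$, one obtains $\langle f \rangle \cap \FF = \langle g \rangle \cap \FF$, i.e.\ $f \sim_\FF g$, so the decomposition is trivial.

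For the converse, assume without loss of generality that $f \sim_\FF g$. Combined with $f \sim_\FF |g| \wedge |h|$ this forces $\langle g \rangle \cap \FF = \langle g \rangle \cap \langle h \rangle \cap \FF$. Since every principal kernel under consideration lies inside $\FF$ by the standing assumption of Section~\ref{section:Convexity degree and Hyperdimension}, this upgrades to $\langle g \rangle \subseteq \langle h \rangle$, so in particular $g \in \langle h \rangle$. Corollary~\ref{cor_principal_ker_by_order} then supplies some $n \in \mathbb{N}$ with $|h|^{-n} \leq g \leq |h|^n$; taking reciprocals yields $|h|^{-n} \leq g^{-1} \leq |h|^n$ as well, and therefore $|g| = g + g^{-1} \leq |h|^n$. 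Set $g' = g$ and $h' = |h|^n$. Since $|h|^n \geq 1$ we have $|h'| = |h|^n$, and by items~(ii) and~(viii) of Remark~\ref{bfacts} the equalities $\langle |h|^n \rangle = \langle |h| \rangle = \langle h \rangle$ hold, so $h' \sim_\FF h$. The inequality $|g'| \leq |h'|$ then exhibits $\FF$-comparability of $g$ and $h$.

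The main obstacle is the \emph{trivial implies comparable} direction, where one must promote the global equivalence $f \sim_\FF g$ (a statement about equality of kernels modulo $\FF$) into an actual pointwise inequality between the absolute values of representatives of $g$ and $h$. The crucial leverage is provided by the order-interval description of a principal kernel in Corollary~\ref{cor_principal_ker_by_order}, together with the observation that raising $|h|$ to a positive integer power does not change its $\FF$-equivalence class, so that one is free to choose the exponent $n$ as large as the containment $g \in \langle h \rangle$ demands.
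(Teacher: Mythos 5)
Your proposal is correct and follows essentially the same route as the paper: the easy direction exploits that $\wedge$ is the lattice infimum so a comparison $|g'| \le |h'|$ collapses $|g| \wedge |h|$ to $|g'|$, and the hard direction uses the order-interval description of a principal kernel (Corollary~\ref{cor_principal_ker_by_order}) together with $\langle |h|^n \rangle = \langle h \rangle$ to turn $g \in \langle h \rangle$ into the explicit comparison $|g| \le |h|^n$. If anything your account of the hard direction is cleaner and more complete than the paper's, which leaves the final step from $f \in \langle h \rangle$ to $\FF$-comparability implicit; the only loose point is your appeal to a blanket ``standing assumption'' that all kernels lie inside $\FF$ --- what actually licenses the upgrade from $\langle g \rangle \cap \FF = \langle g \rangle \cap \langle h \rangle \cap \FF$ to $\langle g \rangle \subseteq \langle h \rangle$ is the local declaration $\Theta = \PCon(\langle \mathscr{R} \rangle)$ made at the start of the Decompositions subsection, not a section-wide hypothesis.
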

\begin{proof}
If $g$ and $h$ are $\FF$-comparable, then there exist some $g'
\sim_{\FF} g$ and $h' \sim_{\FF} h$ such that $|g'| \ge_{\nu}
|h'|$ or $|h'| \ge_{\nu}  |g'|$. Without loss of generality,
assume that $|g'| \ge_{\nu}  |h'|$. Then $\langle  |g| \wedge |h|
\rangle  = \langle |g| \rangle \cap \langle |h| \rangle = \langle
|g'| \rangle \cap \langle |h'| \rangle = \langle  |g'| \wedge |h'|
\rangle = \langle \min(|g'|,|h')| \rangle = \langle  |g'| \rangle
= \langle g' \rangle = \langle  g \rangle$. Thus $\langle f
\rangle = \langle g \rangle$ so $f \sim_{\FF} g$ yielding that the
decomposition is trivial.

Conversely, if  $g$ and $h$ are not $\FF$-comparable then we claim
that $f \not \sim_{\FF} g$ and $f \not \sim_{\FF} h$. We must show
that $\langle h \rangle \not \subseteq \langle g \rangle$ and
$\langle g \rangle \not \subseteq \langle h \rangle$ respectively.
So assume that $\langle h \rangle  \supseteq \langle g \rangle$.
In view of Lemma~\ref{prop_HP_element_is_a_generator} and
\cite[Proposition~4.1.13]{Kern}, there exists some $f' \sim f$
such that $f' = |h|^{k} \wedge g$. Note that $|h|^{k} \geq 1$ and
$g \geq 1$ so $f' = |h|^{k} \wedge g \geq 1$ and thus $|f'| = f'$.
Finally, $$|f'| = |h|^{k} \wedge g \Leftrightarrow |f'| \leq
|h|^{k} \Leftrightarrow |f'| \in \langle |h| \rangle
\Leftrightarrow f' \in \langle h \rangle \Leftrightarrow f \in
\langle h \rangle.$$
\end{proof}

\subsection{Convex dependence}$ $


\begin{defn}\label{defn_convex_dep}
An  HS-fraction  $f$ of $F(\Lambda)$ is
\textbf{$F$-convexly dependent} on a set $A$   of HS-fractions  if
\begin{equation}\label{eq_defn_convex_dep}
f \in \left\langle \{g : g \in A \} \right\rangle \cdot \langle F \rangle;
\end{equation}
otherwise $f$ is said to be \textbf{$F$-convexly-independent} of
$A$. The set $A$ is said to be $F$-\textbf{convexly independent}
if   $f$  is $F$-convexly independent of $A \setminus \{ f \}$,
for  every $f \in A$. If $\{a_1,...,a_n \}$ is $F$-convexly
dependent, then we also say that $a_1,...,a_n$ are $F$-convexly
dependent.

\end{defn}

Note that under the assumption that $g \in  \langle F \rangle
\setminus \{1\}$ for some $g \in A$, the condition in
\eqref{eq_defn_convex_dep} simplifies to $f \in \left\langle \{g :
g \in A \} \right\rangle $.

\begin{rem}
By  definition, an HS-fraction $f$ is
$F$-convexly   dependent on HS-fractions $\{g_1,...,g_t\} $  if and only if
$$\langle |f|  \rangle = \langle f  \rangle  \subseteq  \langle g_1,..., g_t \rangle \cdot
\langle F \rangle = \Big\langle \sum_{i=1}^{t} |g_i| \Big\rangle \cdot \langle F \rangle
 = \Big\langle \sum_{i=1}^{t} |g_i|  +  |\alpha| \Big\rangle,$$
  for any element $\alpha$ of $F$ for which $\a^\nu \ne \onenu.$
\end{rem}

\begin{exmp}
For any $\alpha \in F$ and any $f \in F(\Lambda)$,
$$|\alpha f| \leq |f|^2  + |\alpha|^2 =(|f|  +  |\alpha|)^2.$$
Thus $\alpha f \in \left\langle (|f|  +  |\alpha|)^2 \right\rangle = \langle |f|  +  |\alpha| \rangle = \langle f \rangle \cdot \langle F \rangle$.
In particular, if $f$ is an HS-fraction, then $\alpha f$ is $F$-convexly dependent on $f$.
\end{exmp}

As a consequence of Lemma~\ref{prop_HP_element_is_a_generator}, if
two $\scrL$-monomials $f,g$, satisfy $g \in \langle f \rangle$,
then $\langle g \rangle = \langle f \rangle$. In other words,
either $\langle g \rangle = \langle f \rangle$ or $\langle g
\rangle \not \subseteq \langle f \rangle$ and $\langle f \rangle
\not \subseteq \langle g \rangle$. This motivates us to restrict
the convex dependence relation to the set of $\scrL$-monomials.
This will be justified later by showing that for each $F$-convexly
independent subset of HS-fractions of order $t$ in $F(\Lambda)$,
there exists an $F$-convexly independent subset of
$\scrL$-monomials having order $ \geq t$ in $F(\Lambda)$. Let us
see that convex-dependent is an abstract dependence relation.

\begin{prop}\label{prop_abstract_dependence_properties}

Let $A, A_1 \subset  F(\Lambda)$ be  sets of HS-fractions, and let
$f$ be an HS-fraction.
\begin{enumerate}
  \item If $f \in A$, then $f$ is $F$-convexly-dependent on $A$.
  \item If $f$ is $F$-convexly dependent on $A$ and   each $a \in A$ is
  $F$-convexly-dependent on $A_1$, then $f$ is $F$-convexly dependent on $A_1$.
  \item If $f$ is $F$-convexly-dependent on $A$, then $f$ is $F$-convexly-dependent on $A_0$ for some finite subset $A_0$ of $A$.
\end{enumerate}
\end{prop}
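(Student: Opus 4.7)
The plan is to verify each of the three axioms of an abstract dependence relation directly from the definition $f \in \langle A \rangle \cdot \langle F \rangle$, relying on basic kernel arithmetic (Remark~\ref{bfacts}, Proposition~\ref{prop_ker_stracture_by group}) together with the idempotence identity $\langle F \rangle \cdot \langle F \rangle = \langle F \rangle$, which holds because $\langle F \rangle$ is a subgroup of $F(\Lambda)$ containing $1$.

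Property (1) is immediate: if $f \in A$, then $f \in \langle A \rangle \subseteq \langle A \rangle \cdot \langle F \rangle$, since $1 \in \langle F \rangle$ and any set is contained in the kernel it generates.

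For property (2), the hypothesis that each $a \in A$ is $F$-convexly dependent on $A_1$ gives the set-theoretic inclusion $A \subseteq \langle A_1 \rangle \cdot \langle F \rangle$. By Remark~\ref{bfacts}(iii), the right hand side is a kernel, so by the minimality of $\langle A \rangle$ among kernels containing $A$, one has $\langle A \rangle \subseteq \langle A_1 \rangle \cdot \langle F \rangle$. Multiplying by $\langle F \rangle$ (which preserves inclusion) and using $\langle F \rangle \cdot \langle F \rangle = \langle F \rangle$, we obtain
\[
\langle A \rangle \cdot \langle F \rangle \;\subseteq\; \langle A_1 \rangle \cdot \langle F \rangle \cdot \langle F \rangle \;=\; \langle A_1 \rangle \cdot \langle F \rangle,
\]
so $f \in \langle A \rangle \cdot \langle F \rangle \subseteq \langle A_1 \rangle \cdot \langle F \rangle$, as required.

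For property (3), write $f = g \cdot h$ with $g \in \langle A \rangle$ and $h \in \langle F \rangle$. By Proposition~\ref{prop_ker_stracture_by group} applied to the (normal) multiplicative subgroup $N$ of $F(\Lambda)$ generated by $A$, the element $g$ has the form $g = \sum_{i=1}^{m} s_i u_i$ with $\sum s_i = 1$ and each $u_i$ a finite product of elements of $A$ and their inverses. Only finitely many elements of $A$ appear in these finitely many products; let $A_0 \subseteq A$ be the (finite) collection of all of them. Then every $u_i$ lies in the subgroup generated by $A_0$, so $g \in \langle A_0 \rangle$ and hence $f = g \cdot h \in \langle A_0 \rangle \cdot \langle F \rangle$, establishing finite character. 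I do not anticipate a real obstacle; the only point to watch is that the definition carries the factor $\langle F \rangle$, and all three arguments must respect it via the identity $\langle F \rangle \cdot \langle F \rangle = \langle F \rangle$.
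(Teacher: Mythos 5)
Your proof is correct and follows essentially the same route as the paper's: (1) is the trivial inclusion, (2) uses minimality of $\langle A\rangle$ together with $\langle F\rangle\cdot\langle F\rangle=\langle F\rangle$, and (3) applies Proposition~\ref{prop_ker_stracture_by group} to express $f$ as a finite combination and then extracts the finitely many elements of $A$ appearing in the group words. If anything, your bookkeeping in (3) is slightly cleaner than the paper's, which names $A_0=\{g_1,\dots,g_k\}$ with $g_i$ lying in the group generated by $A\cup F$ rather than in $A$ itself; your version correctly takes $A_0$ to be the finitely many elements of $A$ that actually occur in those group words, which is what finite character requires.
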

\begin{proof}$ $
(1)  $f \in \langle A \rangle \subseteq
\langle A \rangle \cdot \langle F \rangle $.

(2)  $\langle A \rangle \subseteq \langle A_1 \rangle \cdot
\langle F \rangle$ since $a$ is convexly-dependent on $A_1$  for
each $a \in A$. If $f$ is $F$-convexly dependent on~$A$, then $f
\in \langle A \rangle \cdot \langle F \rangle \subseteq \langle
A_1 \rangle \cdot \langle F \rangle$, so, $f$ is $F$-convexly
dependent on $A_1$.

 (3) $a \in
\langle A \rangle \cdot \langle F \rangle$, so by
Proposition \ref{prop_ker_stracture_by group} there exist some
$s_1,...,s_k \in F(\Lambda) $ and  $g_1,...,g_k \in G(A \cup
F) \subset \langle A \rangle \cdot \langle F
\rangle$, where $G(A \cup F)$ is the group generated by $A
\cup F$, such that $\sum_{i=1}^{k}s_i = 1$ and $a =
\sum_{i=1}^{k}s_i g_i^{d(i)}$ with $d(i) \in \mathbb{Z}$. Thus $a
\in \langle g_1,...,g_k \rangle$ and  $A_0  = \{g_1,...,g_k\}$.
\end{proof}

From now on, we assume that the $\nu$-\semifield0 $F$ is
divisible.

\begin{prop}[Steinitz exchange axiom]\label{prop_convex_dependence_of_HP_property}
 Let $S =
\{b_1,...,b_t\} \subset \operatorname{HP}(F(\Lambda))$ and let $f$
and $b$ be elements of $\operatorname{HP}(F(\Lambda))$. If $f$ is
$F$-convexly-dependent on $S \cup \{ b \}$ and $f$ is $F$-convexly
independent of $S$, then $b$ is $F$-convexly-dependent on $S \cup \{
f \}$.
\end{prop}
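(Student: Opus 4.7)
The plan is to reduce $F$-convex dependence of $\scrL$-monomials to a purely multiplicative power-relation, and then invoke the classical Steinitz exchange argument from linear algebra.

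First, I would establish the following characterization: for $\scrL$-monomials $f, b_1, \dots, b_t, b$, the function $f$ is $F$-convexly dependent on $\{b_1, \dots, b_t, b\}$ if and only if there exist an integer $N \ge 1$, integers $k_1, \dots, k_t, k$, and a scalar $c \in F$ with $f^N = c \cdot b^k \cdot \prod_{i=1}^{t} b_i^{k_i}$. The implication from the power relation to convex dependence is immediate: the right-hand side lies in $\langle b_1, \dots, b_t, b \rangle \cdot \FF$, so $f^N$ does, and by Remark~\ref{bfacts}(viii) so does $f$. For the converse, convex dependence gives $|f| \in \langle \sum_{i}|b_i| + |b| + |\a|\rangle$ for any $\a \in F$ with $\a \neq 1$, whence Corollary~\ref{cor_principal_ker_by_order} yields $|f| \le_\nu B^n$ for some $n$, where $B = \sum_i |b_i| + |b| + |\a|$. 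Expanding $B^n$ via idempotency and testing the inequality at tangible points $\bfa \in F^{(n)}$ whose $\nu$-coordinates run along arbitrary rational rays forces the exponent vector of $f$ to lie in the $\mathbb{Q}$-span of the exponent vectors of $b_1, \dots, b_t, b$; clearing denominators then produces the required power relation, with divisibility of $F$ ensuring that the needed roots of the scalar factor lie in $F$.

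With this characterization in hand, the Steinitz exchange follows the standard linear-algebra template. $F$-convex dependence of $f$ on $S \cup \{b\}$ yields an equation $f^N = c \cdot b^k \cdot \prod_{i=1}^{t} b_i^{k_i}$ as above. I would next observe that $k \ne 0$: otherwise the same relation would witness that $f$ is $F$-convexly dependent on $S$ alone, contradicting the hypothesis of $F$-convex independence of $f$ from $S$. Because $k \ne 0$, solving for $b^k$ yields $b^k = c^{-1} f^N \prod_i b_i^{-k_i}$, which places $b^k$ inside $\langle f, b_1, \dots, b_t \rangle \cdot \FF$; one more application of Remark~\ref{bfacts}(viii) then gives $b \in \langle f, b_1, \dots, b_t \rangle \cdot \FF$, i.e., $b$ is $F$-convexly dependent on $S \cup \{f\}$, as required.

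The main obstacle is the converse direction of the characterization, namely extracting the multiplicative power relation from the order-theoretic bound $|f| \le_\nu B^n$. In the ordered ghost group $\tG$, applied $\nu$-isomorphically to the group $\tT$ of $F$, $|f|$ behaves like the absolute value of a linear form in the ``log-coordinates'' of $\bfa$, and the bound becomes domination by a finite maximum of nonnegative integer combinations of the analogous forms attached to $b_1, \dots, b_t, b$ and $\a$. Showing that such a pointwise domination forces rational linear dependence of exponents is the heart of the matter; this is where divisibility of $F$ (in force from this proposition onwards, as emphasized in the Overview) is essential, since it supplies enough tangible test points to witness the dependence and enough roots in $F$ to absorb leftover scalars.
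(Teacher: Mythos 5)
Your argument is correct in outline but follows a genuinely different route from the paper. The paper's proof normalizes so that some $\alpha\in F$ already lies in $S$, passes to the quotient semifield $F(\Lambda)/\langle S\rangle$, observes that $\phi(f)$ and $\phi(b)$ are nontrivial $\scrL$-monomials there (since $f,b\notin\langle S\rangle\cdot\langle F\rangle$), invokes the one-generator Lemma~\ref{prop_HP_element_is_a_generator} to conclude $\langle\phi(f)\rangle=\langle\phi(b)\rangle$, and pulls back to get $\langle S\rangle\cdot\langle f\rangle=\langle S\rangle\cdot\langle b\rangle$. You instead stay entirely in $F(\Lambda)$ and first prove a multi-generator strengthening of Lemma~\ref{prop_HP_element_is_a_generator}: that $F$-convex dependence of the $\scrL$-monomial $f$ on $\{b_1,\dots,b_t,b\}$ is equivalent to a power relation $f^N=c\,b^k\prod_i b_i^{k_i}$ with $c\in F$. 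Your exchange step ($k\neq 0$, since otherwise the relation would already make $f$ dependent on $S$) plays exactly the role of the paper's observation that $b\notin\langle S\rangle\cdot\langle F\rangle$. Both proofs rest on the same phenomenon --- an order bound among $\scrL$-monomials forces a multiplicative power relation --- but the paper isolates the one-generator case in a cited lemma and reduces to it by modding out $\langle S\rangle$, whereas you re-derive the multi-generator content directly so that the quotient never enters. What your route buys is avoiding the paper's implicit re-use of Lemma~\ref{prop_HP_element_is_a_generator} inside the quotient semifield $F(\Lambda)/\langle S\rangle$ (whose applicability there the paper does not pause to justify); what it costs is that the characterization you appeal to is a substantial lemma you only sketch, and the step where $|f|\leq_{\nu} B^n$ forces $\mathbb{Q}$-linear dependence of exponent vectors is precisely where $\nu$-archimedeanity and divisibility do real work and would need a careful argument. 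One small correction to your sketch: divisibility is needed to supply enough tangible test points $\bfa\in F^{(n)}$ to witness the exponent dependence, not to extract roots of scalars --- once integer exponents are fixed, $c=f^N/\bigl(b^k\prod_i b_i^{k_i}\bigr)$ is automatically a ratio of coefficients already lying in $F$, so no roots are taken.
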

\begin{proof}
We may assume that $\alpha \in S$ for some $\alpha \in F$. Since
$f$ is $F$-convexly independent of $S$, by definition $f \not \in
\langle S \rangle$ this implies that $\langle S \rangle \subset
\langle S \rangle \cdot \langle f \rangle$ (for otherwise $\langle
f \rangle \subseteq \langle S \rangle$ yielding that $f$ is
$F$-convexly dependent on $S$).  Since $f$ is
$F$-convexly-dependent on $S \cup \{ b \}$, we have that  $f \in
\langle S \cup \{ b \} \rangle = \langle S \rangle \cdot \langle b
\rangle$. In particular, we get that $b \not \in \langle S \rangle
\cdot \langle F \rangle$ for otherwise $f$ would be dependent on
$S$. Consider the quotient map $\phi : F(\Lambda)  \rightarrow
F(\Lambda)/\langle S \rangle$. Since $\phi$ is a semifield
epimorphism and $f,b  \not \in \langle S \rangle \cdot \langle F
\rangle = \phi^{-1}(\langle F \rangle)$, we have that $\phi(f)$
and $\phi(b)$ are not in $F$ thus are $\scrL$-monomials in the
semifield $\Im(\phi) =F(\Lambda)/\langle S \rangle$. By the above,
$\phi(f) \neq 1$ and $\phi(f) \in \phi(\langle b \rangle) =
\langle \phi(b) \rangle$. Thus, $\langle \phi(f) \rangle =\langle
\phi(b) \rangle$ by Lemma~\ref{prop_HP_element_is_a_generator}. So
$\langle S \rangle \cdot \langle f \rangle = \phi^{-1}(\langle
\phi(f) \rangle) = \phi^{-1}(\langle \phi(b) \rangle) = \langle S
\rangle \cdot \langle b \rangle$, consequently $b \in \langle S
\rangle \cdot \langle b \rangle = \langle S \rangle \cdot \langle
f \rangle = \langle S \cup \{ f \} \rangle $, i.e., $b$ is
$F$-convexly-dependent on $S \cup \{ f \}$.
\end{proof}

\begin{defn}\label{defn_convex_span_in_semifield_of_fractions}
Let $A \subseteq \operatorname{HP}(F(\Lambda))$. The \textbf{convex
span} of $A$ over $F$ is the set
\begin{equation}
\ConSpan_{F}(A) = \{ a \in \operatorname{HP}(F(\Lambda)) : a \text{
is $F$-convexly dependent on $A$} \}.
\end{equation} For a \semifield0 $\mathrm{K} \subseteq F(\Lambda)$  such that
$F \subseteq \mathrm{K}$.
 a set  $A \subseteq
\operatorname{HP}(F(\Lambda))$  is said to \textbf{convexly span}
$\mathrm{K}$ over $F$ if  $$\operatorname{HP}(\mathrm{K}) =
\ConSpan_{F}(A).$$
\end{defn}

\begin{rem}
$\ConSpan(\{ f_1,...,f_m\}) =  \langle f_1,...,f_m \rangle \cdot \langle F \rangle.$
\end{rem}

In view of Propositions \ref{prop_abstract_dependence_properties}
and \ref{prop_convex_dependence_of_HP_property}, convex dependence
on  $\operatorname{HP}(F(\Lambda))$ is an abstract dependence
relation. Then by \cite[Chapter 6]{Ro}, we have:

\begin{cor}\label{cor_basis_for_HP}
Let $V \subset \operatorname{HP}(F(\Lambda))$. Then $V$ contains a
basis $B_V \subset V$, which is a maximal convexly independent
subset of unique cardinality such that $$\ConSpan(B_V) =
\ConSpan(V).$$
\end{cor}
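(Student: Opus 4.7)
The plan is to invoke the standard basis theorem for abstract dependence relations: Propositions~\ref{prop_abstract_dependence_properties} and \ref{prop_convex_dependence_of_HP_property} together verify reflexivity, transitivity, finite character, and the Steinitz exchange axiom for $F$-convex dependence on $\operatorname{HP}(F(\Lambda))$, after which the corollary is formal (as the author indicates by citing \cite[Chapter~6]{Ro}). I sketch the three constituent arguments.

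\emph{Existence.} Order the set $\mathcal{I}$ of $F$-convexly independent subsets of $V$ by inclusion and apply Zorn's lemma. Every chain in $\mathcal{I}$ has its union as an upper bound: if some $b$ in the union were $F$-convexly dependent on the remaining elements, then by part~(3) of Proposition~\ref{prop_abstract_dependence_properties} the dependence would already occur inside a single member of the chain, contradicting independence at that stage. Any maximal element $B_V$ of $\mathcal{I}$ is the desired candidate.

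\emph{Spanning.} I argue that every $v \in V$ lies in $\ConSpan_F(B_V)$. If $v \in B_V$, this is part~(1) of Proposition~\ref{prop_abstract_dependence_properties}. If $v \notin B_V$, then by maximality of $B_V$ the set $B_V \cup \{v\}$ is $F$-convexly dependent, so some element $x$ of this set is dependent on the others. The case $x = v$ is immediate. Otherwise $x \in B_V$ is dependent on $(B_V \setminus \{x\}) \cup \{v\}$ but, by independence of $B_V$, not on $B_V \setminus \{x\}$; the Steinitz exchange axiom (Proposition~\ref{prop_convex_dependence_of_HP_property}), applied with $f = x$ and $S = B_V \setminus \{x\}$, then forces $v$ to be $F$-convexly dependent on $B_V$.

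\emph{Uniform cardinality.} For two maximal $F$-convexly independent subsets $B, B' \subseteq V$, I would run the classical Steinitz exchange: pick $b \in B \setminus B'$; by finite character $b$ is $F$-convexly dependent on a finite subset of $B'$, and the exchange axiom permits replacing some $b' \in B'$ by $b$ to obtain another maximal $F$-convexly independent subset, of the same cardinality as $B'$, containing one more element of $B$. Iterating (transfinitely, if necessary) absorbs all of $B$ into a relabeled copy of $B'$, so $|B| \le |B'|$; symmetry gives equality. The only real bookkeeping obstacle is the transfinite induction in the infinite case, which is entirely standard and transfers unchanged from the vector-space setting, so nothing specific to the semifield structure is needed beyond the two propositions cited.
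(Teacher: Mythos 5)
Your proof is correct and takes essentially the same route as the paper: the paper merely verifies (Propositions~\ref{prop_abstract_dependence_properties} and~\ref{prop_convex_dependence_of_HP_property}) that $F$-convex dependence on $\operatorname{HP}(F(\Lambda))$ is an abstract dependence relation and then cites the standard basis theorem from \cite[Chapter~6]{Ro}, and your Zorn's-lemma existence argument, exchange-based spanning argument, and Steinitz cardinality argument are exactly the contents of that cited result.
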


\begin{exmp}\label{exmp_basis_for_semifield_of_fractions}
By Lemma~\ref{prop_maximal_kernels_in_semifield_of_fractions_part2},
the maximal kernels in $\PCon(F(\Lambda))$ are HS-fractions of the
form $L_{(\alpha_1,...,\alpha_n)}=\langle \alpha_1 x_1, \dots,
\alpha_n x_n \rangle$ for any $\alpha_1,...,\alpha_n \in F$. In view
of Corollary~\ref{gen5},
$$F(\Lambda) = \ConSpan(\{ \alpha_1 x_1, \dots, \alpha_n x_n \}),$$
i.e., $\{ \alpha_1 x_1, \dots, \alpha_n x_n \}$ convexly spans
$F(\Lambda)$ over $F$.  Now $\alpha_k x_k \not \in \langle
\bigcup_{j \neq k}\alpha_j x_j \rangle \cdot~\langle F\rangle$,
since there are no order relations between $\alpha_i x_i$ and the
elements of  $\{ \alpha_j x_j : j \neq i \} \cup \{\alpha\ : \alpha
\in F \}$.  Thus, for arbitrary $\alpha_1,...,\alpha_n \in F$, $\{
\alpha_1 x_1, \dots, \alpha_n x_n \}$ is $F$-convexly independent,
constituting a basis for $F(\Lambda)$.

\begin{defn}
Let $V \subset \operatorname{HP}(F(\Lambda))$ be a set of
$\scrL$-monomials. We define the \textbf{convex  dimension} of $V$,
$\condeg(V)$, to be $|B|$ where $B$ is a basis for $V$.
\end{defn}

\begin{exmp}\label{rem_convexity_degree_of_semifield_of_fractions}
$\condeg\left(F(\Lambda)\right) = n$, by Example \ref{exmp_basis_for_semifield_of_fractions}.
\end{exmp}

\begin{rem}\label{rem_dependence_is_a_lattice}
If $S \subset \operatorname{HP}(F(\Lambda))$, then for any $f, g \in
F(\Lambda)$ such that $f,g \in \ConSpan(S)$
$$|f|  +  |g| \in \ConSpan(S) \ \ \text{and} \ \ |f| \wedge |g| \in \ConSpan(S).$$
\end{rem}
\begin{proof}
First we prove that $|f|  +  |g| \in \ConSpan(S)$. Since
$\langle f \rangle \subseteq \langle S \rangle \cdot \langle
F \rangle$ and $\langle g \rangle \subseteq \langle S
\rangle \cdot \langle F \rangle$, we have $\langle f, g
\rangle =  \langle |f|  +  |g| \rangle = \langle f \rangle
\cdot \langle g \rangle \subseteq \langle S \rangle \cdot \langle
F \rangle$.  $|f| \wedge |g| \in \ConSpan(S)$,  since
$\langle |f| \wedge|g| \rangle = \langle f \rangle \cap \langle g
\rangle \subseteq \langle g \rangle \subseteq \langle  S \rangle
\cdot \langle F \rangle$.
\end{proof}

\begin{rem}\label{rem_HS_kernel_finitely_spanned}
If $K$ is an HS-kernel, then $K$ is generated by an HS-fraction
$f~\in~F(\Lambda)$ of the form $f = \sum_{i=1}^{t} |f_i|$ where
$f_1,...,f_t$ are $\scrL$-monomials. So,
$$\ConSpan(K) = \langle F \rangle \cdot K =\langle F \rangle \cdot \langle f \rangle = \langle F \rangle \cdot \Big\langle \sum_{i=1}^{t} |f_i| \Big\rangle = \langle F \rangle \cdot \prod_{i=1}^{t} \langle f_i \rangle$$ $$= \langle F \rangle \cdot \langle f_1,...,f_t \rangle$$ and so, $\{f_1,..., f_t \}$ convexly spans $\langle F \rangle \cdot K$.
\end{rem}

\begin{rem}\label{rem_HS_depend_on_HP}
Let $f$ be an HS-fraction. Then $f \sim_{\FF} \sum_{i=1}^{t}
|f_{i}|$ where $f_{i}$ are   $\scrL$-monomials. Hence $f$ is
$F$-convexly dependent on $\{ f_1,...,f_t \}$, since $\langle f
\rangle = \prod_{i=1}^{t} \langle f_i \rangle = \langle \{
f_1,...,f_t \} \rangle$.
\end{rem}

\begin{lem}\label{lem_HS_HP_convexity_relations}
 Suppose $\{ b_1, ...,b_m \}$ is a set of HS-fractions,
 such that $b_i\sim_{\FF}\sum_{j=1}^{t_i} |f_{i,j}|$,  where $f_{i,j}$ are $\scrL$-monomials.
 Then $b_1$ is $F$-convexly dependent on $\{ b_2, ...,b_m \}$ if and only if all of its
 summands $f_{1,r}$ for $1 \leq r \leq t_1$ are $F$-convexly dependent on $\{ b_2, ...,b_m \}$.
\end{lem}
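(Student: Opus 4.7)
The plan is to reduce everything to a statement about containment in the kernel $\langle b_2,\dots,b_m\rangle \cdot \langle F \rangle$, using the decomposition of each HS-kernel $\langle b_i\rangle$ as a product of the HP-kernels generated by its $\scrL$-monomial summands. By Remark~\ref{rem_HS_depend_on_HP} (which in turn rests on Proposition~\ref{remHSprodHP}), for each $1 \leq i \leq m$ one has
\begin{equation*}
\langle b_i \rangle \;=\; \prod_{j=1}^{t_i} \langle f_{i,j} \rangle \;=\; \langle f_{i,1},\dots,f_{i,t_i}\rangle,
\end{equation*}
so each individual summand satisfies $\langle f_{i,j}\rangle \subseteq \langle b_i\rangle$. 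This is the single structural fact that drives both directions.

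For the $(\Leftarrow)$ direction, assume each $f_{1,r}$ (for $1 \le r \le t_1$) is $F$-convexly dependent on $\{b_2,\dots,b_m\}$, i.e.\ $\langle f_{1,r}\rangle \subseteq \langle b_2,\dots,b_m\rangle\cdot\langle F\rangle$. Taking the product of these containments over $r$ and invoking Remark~\ref{rem_dependence_is_a_lattice} (closure of $\ConSpan$ under products/joins) yields
\begin{equation*}
\langle b_1\rangle \;=\; \prod_{r=1}^{t_1}\langle f_{1,r}\rangle \;\subseteq\; \langle b_2,\dots,b_m\rangle\cdot\langle F\rangle,
\end{equation*}
which is precisely the assertion that $b_1$ is $F$-convexly dependent on $\{b_2,\dots,b_m\}$.

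For the $(\Rightarrow)$ direction, assume $\langle b_1\rangle \subseteq \langle b_2,\dots,b_m\rangle\cdot\langle F\rangle$. For every fixed $r$, the containment $\langle f_{1,r}\rangle \subseteq \prod_{j=1}^{t_1}\langle f_{1,j}\rangle = \langle b_1\rangle$ is immediate from the decomposition above, and composing gives $\langle f_{1,r}\rangle \subseteq \langle b_2,\dots,b_m\rangle\cdot\langle F\rangle$, so $f_{1,r}$ is $F$-convexly dependent on $\{b_2,\dots,b_m\}$.

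The only real obstacle is justifying the equality $\langle b_i\rangle = \prod_j\langle f_{i,j}\rangle$ on the nose rather than merely modulo $\sim_{\FF}$; here we rely on the fact that the relation $\sim_{\FF}$ and the equality $\langle b_i\rangle = \prod_j \langle f_{i,j}\rangle$ are already reconciled in Remark~\ref{rem_HS_depend_on_HP}, so this is a citation rather than a new verification. Once that equality is in hand, both directions of the lemma are a two-line application of the lattice properties of $\ConSpan$.
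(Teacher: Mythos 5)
Your proposal is correct, and the forward direction (from dependence of $b_1$ to dependence of each $f_{1,r}$) matches the paper's argument essentially verbatim: both pass through the equality $\langle b_1\rangle = \prod_j \langle f_{1,j}\rangle$ and then observe that each $\langle f_{1,r}\rangle$ sits inside that product. On the converse direction your route is genuinely different: the paper invokes the order-theoretic membership criterion for principal kernels (Corollary~\ref{cor_principal_ker_by_order}), producing for each $r$ a power $k_r$ with $|f_{1,r}| \le_\nu \bigl(\sum_{i\ge 2}\sum_j |f_{i,j}| + |\alpha|\bigr)^{k_r}$, and then summing these bounds; you instead take the product of the containments $\langle f_{1,r}\rangle \subseteq \langle b_2,\dots,b_m\rangle\cdot\langle F\rangle$ and use that a product of subkernels of a kernel $L$ is again a subkernel of $L$. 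Your version is more abstract and shorter, and makes the lattice-theoretic content clearer; the paper's is more computational but produces the explicit exponent bound, which could be useful elsewhere. One small misattribution: Remark~\ref{rem_dependence_is_a_lattice} is stated only for subsets $S\subset\operatorname{HP}(F(\Lambda))$ of $\scrL$-monomials, whereas $\{b_2,\dots,b_m\}$ are HS-fractions. The fact you actually need is that $K_1,K_2\subseteq L$ implies $K_1K_2\subseteq L$, which is Remark~\ref{bfacts}(iii) (the product is the smallest kernel containing the union) and is what the proof of Remark~\ref{rem_dependence_is_a_lattice} itself uses; citing that instead closes the gap. Your explicit flag that $\langle b_i\rangle = \prod_j\langle f_{i,j}\rangle$ must be taken from Remark~\ref{rem_HS_depend_on_HP} rather than reverified is appropriate, and the paper relies on the same identification.
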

\begin{proof}
If $b_1$ is $F$-convexly dependent on $\{ b_2, ...,b_m \}$, then
$$ \prod_{j=1}^{t_1} \langle f_{1,j} \rangle = \Big\langle \sum_{j=1}^{t_1} |f_{1,j}|
\Big\rangle = \langle b_1 \rangle \subseteq \big\langle \{ b_1, ...,b_m \} \big\rangle .$$
Hence $ f_{1,r} \in \prod_{j=1}^{t_1} \langle f_{1,j} \rangle$ is $F$-convexly   dependent on $\{ b_1, ...,b_m \}$ and by
 Remark \ref{rem_HS_depend_on_HP} $f_{1,r}$ is $F$-convexly dependent on  $ \left\{ f_{i,j} : 2 \leq i \leq m ;\ 1 \leq j \leq t_i \right\}$.
Conversely, if each~$f_{1,r}$ is $F$-convexly dependent on $\{
b_2, ...,b_m \}$ for $1 \leq r \leq t_1$, then there exist some
$k_1,...,k_{t_1}$ such that   $|f_{1,r}| \leq
\sum_{i=2}^{m}\sum_{j=1}^{t_i}  |f_{i,j}|^{k_r}$. Hence $b_1$ is
$F$-convexly dependent on $\{ b_2, ...,b_m \}$, by
Corollary~\ref{cor_principal_ker_by_order}.
\end{proof}

\begin{lem}\label{lem_HS_base_gives_rise_to_HP_base}
Let $V = \{ f_1, ...,f_m \}$ be a $F$-convexly independent set of
HS-fractions, with $f_i \sim_{\FF} \sum_{j=1}^{t_i} |f_{i,j}|$
for $\scrL$-monomials $f_{i,j}$. Then there exists an $F$-convexly
independent subset $$S_0 \subseteq S =\left\{ f_{i,j} : 1 \leq i
\leq m ; 1 \leq j \leq t_i \right\}$$ such that $|S_0| \geq |V|$
and $\ConSpan(S_0) = \ConSpan(V)$.
\end{lem}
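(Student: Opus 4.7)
The plan is to build the required $S_0$ in two stages: first extract one distinguished summand from each $f_i$, producing a convexly independent set of size $m$, and then extend this to a maximal convexly independent subset of $S$.

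I begin by noting that $\ConSpan(V) = \ConSpan(S)$. This is because $\langle f_i \rangle = \prod_{j=1}^{t_i} \langle f_{i,j} \rangle$ for each $i$, and hence $\langle V \rangle = \langle f_1, \dots, f_m \rangle = \prod_{i,j} \langle f_{i,j} \rangle = \langle S \rangle$; multiplying by $\langle F \rangle$ yields the claim.

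Next I exploit the convex independence of $V$ via Lemma \ref{lem_HS_HP_convexity_relations}. Fix $i$. Since $V$ is $F$-convexly independent, $f_i$ is not $F$-convexly dependent on $V \setminus \{f_i\}$, so Lemma \ref{lem_HS_HP_convexity_relations} furnishes an index $j(i)$ such that $f_{i,j(i)}$ is not $F$-convexly dependent on $V \setminus \{f_i\}$. Because $\langle V \setminus \{f_i\} \rangle = \langle\, f_{i',j'} : i' \neq i \,\rangle$, this is equivalent to $f_{i,j(i)}$ being not $F$-convexly dependent on $T_i := \{f_{i',j'} : i' \neq i,\ 1 \leq j' \leq t_{i'}\}$. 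Set $S_1 = \{f_{1,j(1)}, \dots, f_{m,j(m)}\}$. The elements of $S_1$ are distinct, because otherwise $f_{i,j(i)}$ would lie in $T_i$, making it trivially $F$-convexly dependent on $T_i$. Moreover $S_1$ is $F$-convexly independent: for each $i$, $S_1 \setminus \{f_{i,j(i)}\} \subseteq T_i$, and convex dependence is monotone in the set, so $f_{i,j(i)}$ is not $F$-convexly dependent on $S_1 \setminus \{f_{i,j(i)}\}$. Thus $|S_1| = m$.

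Finally, I extend $S_1$ to $S_0$. Since $F$-convex dependence on HP-elements is an abstract dependence relation (Propositions \ref{prop_abstract_dependence_properties} and \ref{prop_convex_dependence_of_HP_property}), and $S$ is finite, I can enlarge $S_1$ by adding elements of $S$ one at a time as long as independence is preserved, producing a maximal $F$-convexly independent subset $S_0 \subseteq S$ with $S_1 \subseteq S_0$; hence $|S_0| \geq m$. By maximality (and the standard matroid argument using Steinitz exchange, which applies since the dependence relation is abstract — cf.\ Corollary \ref{cor_basis_for_HP}), every $s \in S \setminus S_0$ is $F$-convexly dependent on $S_0$, so $\ConSpan(S_0) = \ConSpan(S) = \ConSpan(V)$, completing the proof.

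The only genuinely nontrivial step is the second one: the successful choice of $j(i)$ hinges on Lemma \ref{lem_HS_HP_convexity_relations}, which turns the convex independence of $V$ (a statement about HS-fractions) into a usable statement about the individual HP-summands. After that, distinctness and independence of $S_1$ follow by the monotonicity of convex dependence, and extending $S_1$ to $S_0$ is routine matroid theory.
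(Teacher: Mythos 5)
Your proof is correct and uses the same ingredients the paper does: Remark~\ref{rem_HS_depend_on_HP} to get $\ConSpan(S) = \ConSpan(V)$, Lemma~\ref{lem_HS_HP_convexity_relations} to pass from the convex independence of the HS-fractions $f_i$ to a statement about their $\scrL$-monomial summands, and the abstract-dependence machinery of Corollary~\ref{cor_basis_for_HP} to extend to a basis. The genuine difference is in how you establish the cardinality bound. The paper's proof is extremely terse: it takes a maximal $F$-convexly independent subset $S_0 \subseteq S$ spanning $\ConSpan(S)$ and then asserts, somewhat cryptically, that Lemma~\ref{lem_HS_HP_convexity_relations} lets one ``shrink down to a base,'' without spelling out why such a basis must have at least $m$ elements (and $V$ is not itself a subset of $\operatorname{HP}(F(\Lambda))$, so one cannot directly invoke uniqueness of basis cardinality to conclude this). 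You supply that missing step cleanly: first extract, via the contrapositive of Lemma~\ref{lem_HS_HP_convexity_relations}, one summand $f_{i,j(i)}$ from each $f_i$ that is $F$-convexly independent of all summands of the other $f_{i'}$; show these $m$ elements are distinct and $F$-convexly independent by monotonicity of the dependence relation; and only then extend $S_1$ to a maximal independent $S_0 \subseteq S$, so $|S_0| \geq |S_1| = m$ by construction. This is the same route in spirit but a more complete and convincing rendering of it.
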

\begin{proof}
By Remark \ref{rem_HS_depend_on_HP}, \  $f_i$ is dependent on the
set of $\scrL$-monomials $\{ f_{i,j} :  1 \leq j \leq t_i \}
\subset S$ for each    $1 \leq i \leq m$, implying $\ConSpan(S) =
\ConSpan(V)$. By Corollary \ref{cor_basis_for_HP},  $S$ contains a
maximal $F$-convexly independent subset $S_0$ such that
$\ConSpan(S_0) = \ConSpan(S)$ which, by
Lemma~\ref{lem_HS_HP_convexity_relations}, we can shrink down to a
base.
\end{proof}

\begin{lem}\label{rem_HP_not_contains_contained_H}
The following hold for an $\scrL$-monomial $f$:
\begin{enumerate}
  \item $\langle F \rangle \not \subseteq \langle f \rangle$.
  \item If $F(\Lambda)$ is not bounded, then $\langle f  \rangle \not \subseteq \langle F \rangle$.
\end{enumerate}
\end{lem}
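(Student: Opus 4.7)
The plan is to argue by contradiction in both items, translating kernel containment into pointwise inequalities via Proposition~\ref{prop_principal_ker} and then producing a point of $F^{(n)}$ that violates the required inequality. Throughout, I write $f = c\la_1^{a_1}\cdots\la_n^{a_n}$ with $c \in F$ and at least one exponent $a_i \neq 0$, using that $f$ is a non-constant Laurent monomial with tangible coefficient.

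For (1), suppose $\langle F \rangle \subseteq \langle f \rangle$, and pick any $\alpha \in F$ with $\alpha \ne 1$; then $|\alpha| \in \langle f \rangle$, and Proposition~\ref{prop_principal_ker} yields $|\alpha| \leq |f|^n$ in $F(\Lambda)$ for some $n \in \mathbb{N}$. The partial order on the $\nu$-idempotent \vsemifield0 $F(\Lambda)$ is pointwise, so this reads $|\alpha|(\bfa) \leq_\nu |f|^n(\bfa)$ for every $\bfa \in F^{(n)}$. Using divisibility of $F$, I solve $f(\bfa) = 1$ by fixing arbitrary values at all coordinates except one index $i$ with $a_i \neq 0$ and extracting the appropriate $a_i$-th root in $F$, producing a point $\bfa \in \Skel(f)$. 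At such a point $|f|(\bfa) = f(\bfa) + f(\bfa)^{-1} = 1 + 1 = 1^\nu$ by the supertropical addition rule, hence $|f|^n(\bfa) \nucong 1$; on the other hand $|\alpha|(\bfa) = |\alpha| >_\nu 1$ since $\alpha \ne 1$, yielding the contradiction.

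For (2), suppose for contradiction that $\langle f \rangle \subseteq \langle F \rangle$. Then $f \in \langle F \rangle = \langle \alpha \rangle$ for some $\alpha \ne 1$, and Proposition~\ref{prop_principal_ker} forces $|f(\bfa)| \leq_\nu |\alpha|^n$ for all $\bfa \in F^{(n)}$ and some fixed $n$. The hypothesis that $F(\Lambda)$ is not bounded forces $F$ itself to contain elements of arbitrarily large $\nu$-value: if every element of $F$ were dominated in $\nu$ by a fixed power $|\beta|^m$, then so too would every Laurent monomial assembled from the $\la_i$ and constants of $F$, hence every element of $F(\Lambda)$, giving $F(\Lambda) \subseteq \langle F \rangle$ and contradicting the hypothesis. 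Since $a_i \neq 0$ for some $i$, I now choose $\bfa$ with a $\la_i$-coordinate of sufficiently large $\nu$-value (and other coordinates fixed arbitrarily); this drives $|f(\bfa)|$ above $|\alpha|^n$, the desired contradiction.

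The main obstacle is pinning down precisely what ``$F(\Lambda)$ is not bounded'' means and reducing it to the unboundedness of $F$ itself used in (2); once this translation is settled, both parts collapse to direct point-evaluations using divisibility of $F$ and the supertropical identity $1 + 1 = 1^\nu$.
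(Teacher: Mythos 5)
Your proof is correct and follows essentially the same strategy as the paper: both parts reduce to the observation that an $\scrL$-monomial is not bounded from below (giving $\langle f\rangle\cap F=\{1\}$, hence part (1)) and not bounded from above when $F(\Lambda)$ is unbounded (giving part (2)). The paper's proof simply invokes these boundedness facts in two sentences; you verify them explicitly via Corollary~\ref{cor_principal_ker_by_order} and point-evaluation using divisibility and the identity $1+1=1^\nu$, which is exactly the content being taken for granted.
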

\begin{proof} By definition, an $\scrL$-monomial is not bounded from below. Thus
$\langle f \rangle \cap F = \{ 1 \}$, yielding $\langle F \rangle
\not \subseteq \langle f \rangle$. For the second assertion,   an
HP-kernel is not bounded when $F(\Lambda)$ is not bounded, so
$\langle f \rangle \not \subseteq \langle F \rangle$.
\end{proof}

A direct consequence of Lemma~\ref{rem_HP_not_contains_contained_H} is:
\begin{lem}
If $F(\Lambda)$ is not bounded, then any nontrivcial HS-kernel (i.e.,
$\ne \langle 1 \rangle$) is   $F$-convexly independent.
\end{lem}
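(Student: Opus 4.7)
The plan is to read this as a direct generalization of Lemma~\ref{rem_HP_not_contains_contained_H}(2) from HP-kernels to HS-kernels: a nontrivial HS-kernel $K = \langle f \rangle$ (equivalently, its generator $f$) is $F$-convexly independent in the sense that $\langle f \rangle \not\subseteq \langle F \rangle$, i.e., $f \not\in \langle F \rangle \cdot \langle 1 \rangle = \langle F \rangle$, which is precisely the condition that the singleton $\{f\}$ be $F$-convexly independent (cf.~Definition~\ref{defn_convex_dep}).

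The key structural observation to invoke is the description of HS-kernels as products of HP-kernels (Proposition~\ref{remHSprodHP} and Remark~\ref{rem_HS_kernel_finitely_spanned}): any HS-fraction generator may be written as $f \sim_{\FF} \sum_{i=1}^t |f_i|$ for $\scrL$-monomials $f_1,\dots,f_t$, and correspondingly $\langle f \rangle = \prod_{i=1}^t \langle f_i \rangle$. Because $K \neq \langle 1 \rangle$, we must have $t \geq 1$ and at least one $f_i$ is genuinely an $\scrL$-monomial (i.e., $\langle f_i \rangle$ is a nontrivial HP-kernel).

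From this setup, the argument is essentially one line: the product $\langle f \rangle = \prod_{i=1}^t \langle f_i \rangle$ is the smallest kernel containing each $\langle f_i \rangle$, so $\langle f \rangle \supseteq \langle f_i \rangle$ for every $i$. Applying Lemma~\ref{rem_HP_not_contains_contained_H}(2) under the hypothesis that $F(\Lambda)$ is not bounded gives $\langle f_i \rangle \not\subseteq \langle F \rangle$, and hence $\langle f \rangle \not\subseteq \langle F \rangle$. Equivalently, no generator of $K$ lies in $\langle F \rangle$, which is the assertion that $K$ is $F$-convexly independent.

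I do not expect any real obstacle: the work has all been done in Lemma~\ref{rem_HP_not_contains_contained_H} and in the product decomposition of HS-kernels, and the present statement is just the boundedness-sensitive propagation from the HP-summands to their product. The only small care needed is to note that a \emph{nontrivial} HS-kernel genuinely has at least one nontrivial $\scrL$-monomial factor, so that Lemma~\ref{rem_HP_not_contains_contained_H}(2) actually applies to some $\langle f_i \rangle$; this is immediate because an HS-fraction by definition is a sum of non-proportional $\scrL$-monomials, and the trivial case collapses to $\langle 1 \rangle$, which is excluded.
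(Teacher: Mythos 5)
Your proof is correct and follows the same route as the paper: the paper's own proof reads ``By Lemma~\ref{rem_HP_not_contains_contained_H} the assertion is true for HP-kernels, and thus for HS-kernels, since every HS-kernel contains some HP-kernel,'' which is precisely your observation that $\langle f \rangle \supseteq \langle f_i \rangle \not\subseteq \langle F \rangle$. Your appeal to the explicit product decomposition of Proposition~\ref{remHSprodHP} and your note that a singleton $\{f\}$ being $F$-convexly independent reduces to $f \notin \langle F \rangle$ merely make the paper's terse argument more explicit.
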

\begin{proof}
By Lemma~\ref{rem_HP_not_contains_contained_H} the assertion is
true for HP-kernels, and thus for   HS-kernels, since every
HS-kernel contains some HP-kernel.
\end{proof}

\subsection{Computing convex dimension}\label{compcon}$ $

Having justified our restriction to $\scrL$-monomials, we move ahead
with computing lengths of chains.

\begin{rem}\label{rem_basis_for_HS_kernel}
Let $K$ be an HS-kernel of $F(\Lambda)$.
By definition there are   $\scrL$-monomials $f_1,...,f_t\in
\operatorname{HP}(K)$ such that $K = \langle \sum_{i=1}^{t} |f_i|
\rangle$. By Remark \ref{rem_HS_kernel_finitely_spanned},
 $\ConSpan(K)$ is convexly spanned by ${f_1,...,f_t}$. Now,
 since $\ConSpan(K) = \ConSpan(f_1,...,f_t)$ and $\{f_1,...,f_t\} \subset \operatorname{HP}(K) \subset \operatorname{HP}(F(\Lambda))$,
 by Corollary~\ref{cor_basis_for_HP}, $\{f_1,...,f_t\}$ contains a basis $B=\{b_1,...,b_s\} \subset \{f_1,...,f_t\}$ of $F$-convexly independent elements, where $s = \condeg(K),$ such that $\ConSpan(B) = \ConSpan(f_1,...,f_t)=\ConSpan(K)$.
\end{rem}

%

\begin{prop}\label{prop_order_independence_1}
For  any order kernel $o$ of $F(\Lambda)$, if $\scrL$-monomials
$h_1,...,h_t $  are $F$-convexly dependent, then the images of $
h_1,...,h_t $  is   $F$-convexly dependent (in the quotient
\semifield0 $F(\Lambda)/o$).
\end{prop}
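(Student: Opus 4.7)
The plan is to exploit the fact that the canonical quotient map $\phi_o : F(\Lambda) \to F(\Lambda)/o$ is an onto \semifield0 homomorphism, so it preserves the containment condition in the definition of $F$-convex dependence.

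First I would unpack the hypothesis. Since the set $\{h_1,\dots,h_t\}$ is $F$-convexly dependent, some $h_i$ — say, after reindexing, $h_1$ — is $F$-convexly dependent on the remaining $\scrL$-monomials; that is, by Definition~\ref{defn_convex_dep},
\begin{equation*}
h_1 \in \langle h_2,\dots,h_t\rangle \cdot \langle F\rangle.
\end{equation*}
By Proposition~\ref{prop_ker_stracture_by group} (together with Remark~\ref{bfacts}(v)) this means $h_1$ can be written as a finite sum $\sum_i s_i g_i$ where each $g_i$ lies in the multiplicative group generated by $\{h_2,\dots,h_t\}\cup F$ and $\sum_i s_i = 1$.

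Next I would apply $\phi_o$. Because $\phi_o$ is an onto \semifield0 homomorphism (Proposition~\ref{prop_coord_semifield_100}), it commutes with $+,\cdot,(\,)^{-1}$ and hence with the construction of Proposition~\ref{prop_ker_stracture_by group}; in particular, for any set $S \subseteq F(\Lambda)$ one has $\phi_o(\langle S\rangle) = \langle \phi_o(S)\rangle$ in $F(\Lambda)/o$ (this is the natural refinement of Theorem~\ref{latticecor}(1) for principal / finitely generated kernels of \semifields0). Applying this observation,
\begin{equation*}
\phi_o(h_1) \in \phi_o\bigl(\langle h_2,\dots,h_t\rangle \cdot \langle F\rangle\bigr)
= \langle \phi_o(h_2),\dots,\phi_o(h_t)\rangle \cdot \langle \phi_o(F)\rangle,
\end{equation*}
which is exactly the condition that $\phi_o(h_1)$ be $F$-convexly dependent on $\{\phi_o(h_2),\dots,\phi_o(h_t)\}$ inside $F(\Lambda)/o$.

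The one subtlety — and likely the main obstacle to being fully precise — is to justify that $\langle \phi_o(F)\rangle$ may legitimately play the role of $\langle F\rangle$ in the quotient's convex dependence relation. Since $o = \langle 1+f\rangle$ for an $\scrL$-monomial $f$, the kernel $o$ meets $F$ trivially (elements of $F$ are bounded above and below in the order of Remark~\ref{corresp}, while a generator of $o$ is not bounded from below on its whole $\onenu$-set in the relevant sense), so $\phi_o|_F$ is injective and $\phi_o(F)$ is a canonical copy of $F$ in $F(\Lambda)/o$; identifying the two, the computation above gives exactly the required $F$-convex dependence in the quotient. Everything else is routine once the homomorphism property and the generator description of kernels are in hand.
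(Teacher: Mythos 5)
Your proof is correct and takes essentially the same approach as the paper: reduce to $h_1 \in \langle h_2,\dots,h_t\rangle\cdot\langle F\rangle$, push through the onto $F$-homomorphism $\phi_o$ using $\phi_o(\langle S\rangle)=\langle\phi_o(S)\rangle$, and identify $\langle\phi_o(F)\rangle$ with $\langle F\rangle$ in the quotient. The only quibble is a misplaced citation (Proposition~\ref{prop_coord_semifield_100} concerns the restriction map $\phi_X$, not the quotient map $\phi_o$); the fact you want — that $\phi_o$ is an onto \semifield0 $F$-homomorphism — is immediate from the construction of the quotient by a kernel, and your observation that $o\cap F=\{1\}$ is the correct justification for identifying $\phi_o(F)$ with $F$.
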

\begin{proof}
Denote by $\phi_o: F(\Lambda) \rightarrow F(\Lambda)/o$ the quotient $F$-homomorphism.
Then $\phi_o( \langle F \rangle) =  \langle \phi_o(F) \rangle = \langle F \rangle_{F(\Lambda)/ o}$.
Now, if $  h_1,...,h_t  $ are $F$-convexly dependent then there exist some $j$,
say without loss of generality $j=1$, such that $ h_1 \in \langle h_2,...,h_t \rangle \cdot \langle F \rangle$.
 By assumption and Proposition~\ref{prop_principal_ker},
\begin{align*}
\phi_o( h_1) \ &   \in \phi_o \left(\langle h_2,...,h_t, \alpha \rangle \right)\\
& = \left\langle \phi_o(h_2),...,\phi_o(h_t),\phi_o(\alpha) \right\rangle = \langle\phi_o(h_2),...,\phi_o(h_t), \alpha \rangle\\
& = \langle \phi_o(h_2),...,\phi_o(h_t)  \rangle \cdot \langle F
\rangle
\end{align*}
Thus $\phi_o( h_1)  $ is $F$-convexly dependent on $\{
\phi_o(h_2),...,\phi_o(h_t) \}$.
\end{proof}

\begin{flushleft}Conversely, we have:\end{flushleft}

\begin{lem}\label{lem_order_independence_2}
For any order kernel   $o$  of $F(\Lambda)$, and any   set $\{
h_1,...,h_t \}$  of $\scrL$-monomials, if
$\phi_o(h_1),...,\phi_o(h_t)$ are $F$-convexly dependent in the
quotient \semifield0   $F(\Lambda)/o$  and
$\sum_{i=1}^{t}\phi_o(|h_i|)\cap F = \{1\}$, then
 $  h_1,...,h_t  $   are $F$-convexly dependent   in
$F(\Lambda)$.
\end{lem}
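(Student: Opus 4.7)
The strategy is to invert the argument of Proposition~\ref{prop_order_independence_1} by using the lattice correspondence for quotients. After relabeling, the hypothesis that $\phi_o(h_1),\dots,\phi_o(h_t)$ are $F$-convexly dependent says (WLOG) that
$$\phi_o(h_1) \in \langle \phi_o(h_2),\dots,\phi_o(h_t)\rangle \cdot \langle F\rangle_{F(\Lambda)/o}.$$
By Theorem~\ref{latticecor} together with the product-of-kernels description of Remark~\ref{bfacts}(iii), the preimage of this kernel under $\phi_o$ is $\langle h_2,\dots,h_t\rangle \cdot \langle F\rangle \cdot o$, so
$$h_1 \in \langle h_2,\dots,h_t\rangle \cdot \langle F\rangle \cdot o \quad\text{in } F(\Lambda).$$

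The next step is to unfold this membership via Corollary~\ref{cor_principal_ker_by_order}: choosing $\alpha \in F$ with $|\alpha|>1$ and a generator $\beta$ of the order kernel $o$, there exists $k \in \mathbb{N}$ with
$$|h_1| \;\leq\; \Big(\textstyle\sum_{i=2}^t |h_i| \,+\, |\alpha| \,+\, |\beta|\Big)^{k}.$$
The desired conclusion $h_1 \in \langle h_2,\dots,h_t\rangle \cdot \langle F\rangle$ is equivalent (again by Corollary~\ref{cor_principal_ker_by_order}) to obtaining such a bound without the $|\beta|$ summand; this is precisely the role played by the hypothesis $\langle\sum_{i=1}^{t}\phi_o(|h_i|)\rangle \cap F = \{1\}$.

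I would then argue by contradiction. Suppose $h_1 \notin \langle h_2,\dots,h_t\rangle \cdot \langle F\rangle$. Then in the quotient, $\phi_o(h_1)$ still lies in $\langle\phi_o(h_2),\dots,\phi_o(h_t)\rangle\cdot\langle F\rangle_{F(\Lambda)/o}$, but any bound realising that membership would need to absorb the $|\beta|$-contribution into a power of $\sum_{i=2}^t |\phi_o(h_i)| + |\phi_o(\alpha)|$. The intersection hypothesis says exactly that $\sum_{i=1}^t \phi_o(|h_i|)$ generates a kernel of $F(\Lambda)/o$ containing no nontrivial element of $F$, so it behaves as a genuine (unbounded) $\scrL$-monomial-like element in the quotient; hence no element of $F$ can substitute for a power of $|\beta|$ in the above inequality. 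This forces the $|\beta|$-term to be redundant to begin with, producing the required bound $|h_1| \leq (\sum_{i=2}^t |h_i| + |\alpha|)^{k}$ and thus the convex dependence of $h_1$ on $\{h_2,\dots,h_t\}$ in $F(\Lambda)$.

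The main obstacle is the final clause: making rigorous the ``cancellation'' principle that turns the $F$-disjointness of $\langle\sum\phi_o(|h_i|)\rangle$ into the redundancy of $|\beta|$ in the order-theoretic bound. I expect this to require a direct computation with the explicit form of elements of $\langle h_2,\dots,h_t\rangle \cdot \langle F\rangle \cdot o$ afforded by Proposition~\ref{prop_ker_stracture_by group}, combined with the $\nu$-idempotent arithmetic of Remark~\ref{bfacts1}, to isolate the contribution of $o$ and show it can be replaced by~$1$ modulo the $|h_i|$'s whenever the intersection condition holds.
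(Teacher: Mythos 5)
Your setup is right and mirrors the paper's: take preimages under $\phi_o$ to reduce to a membership $h_1 \in \langle h_2,\dots,h_t\rangle\cdot\langle F\rangle\cdot o$, unfold that membership into an order-theoretic inequality of the form $|h_1| \le_\nu \bigl(\sum_{i\ge 2}|h_i| + |\alpha| + |\beta|\bigr)^k$ via Corollary~\ref{cor_principal_ker_by_order}, and argue by contradiction. But you stop exactly at the crux: you assert that the intersection hypothesis ``forces the $|\beta|$-term to be redundant,'' and then explicitly flag making this rigorous as the main obstacle you expect to require further computation. That is the entire content of the lemma, so as written there is a genuine gap.

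The paper fills it with a concrete translation-and-inversion argument that your sketch does not anticipate. First, the hypothesis $\langle\sum_{i}\phi_o(|h_i|)\rangle\cap F=\{1\}$ is reread geometrically as $\bigcap_i\skel(h_i)\cap\skel(o)\neq\emptyset$; translating by a common root lets one assume every $\scrL$-monomial involved has constant coefficient $1$, and the order generator is written $o=\langle 1+g\rangle$. After reducing the inequality to $|h_1| + g \le_\nu \sum_{i\ge 2}|h_i|^k + g^k$ and assuming independence in $F(\Lambda)$, one gets witness points $\bfa_m$ at which $|h_1(\bfa_m)|$ dominates $\sum_{i\ge 2}|h_i(\bfa_m)|^m$; these force $\sum_{i\ge 2}|h_i(\bfa_m)|^m <_\nu g(\bfa_m)^k$. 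The key mechanism you are missing then kicks in: writing $g^k = g(1)\,g'$ with $g'$ a coefficient-$1$ Laurent monomial, one has $g'(\bfa^{-1}) \nucong g'(\bfa)^{-1}$, so evaluating at $\bfa_{m_0}^{-1}$ for suitably large $m_0$ makes $g^k$ small ($<_\nu 1$) while each $|h_i|\ge_\nu 1$ always; plugging $\bfa_{m_0}^{-1}$ into the standing inequality then contradicts $m>k$. In short, the ``cancellation principle'' you want is not a formal absorption of $|\beta|$ into a power of the other terms, but a semantic argument using point inversion on coefficient-normalized $\scrL$-monomials, which is why the preliminary translation using the nonempty intersection of skeletons is essential. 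You would need to supply that argument (or an equivalent one) for the proof to go through.
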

\begin{proof}
Note that $\sum_{i=1}^{t}\phi_o(|h_i|) \cap F = \{1\}$ if and only
if $\bigcap_{i=1}^{t}\skel(h_1) \cap \skel(o) \neq \emptyset$.
Translating the variables by a point $a \in
\bigcap_{i=1}^{t}\skel(h_1) \cap \skel(o)$, we may assume that the
constant coefficient of each $\scrL$-monomial $h_i$ is~$1$. Assume
that $\phi_o(h_1),...,\phi_o(h_t)$ are $F$-convexly dependent. We
may assume that $\phi_o(h_1)$ is $F$-convexly dependent on
$\phi_o(h_2),...,\phi_o(h_t)$.  This means by
Definition~\ref{defn_convex_dep} that we can take $h_{t+1} \in F$
for which $\phi_o(h_1)\in \langle
\phi_o(h_2),...,\phi_o(h_t),\phi_o(h_ {t+1}) \rangle$. Taking the
pre-images of the quotient map yields $$\langle h_1 \rangle \cdot o
\subseteq \langle h_2,...,h_t, h_{t+1} \rangle \cdot o.$$ Take an
$\scrL$-monomial $g$ such that   $1 +  g$ generates $o$. By
Corollary~\ref{cor_principal_ker_by_order}, there exists some $k \in
\mathbb{N}$ such that
\begin{equation}\label{eq_order_independence_2_1}
|h_1|  +  | 1  +  g| \leq_{\nu} (|h_2|  +  \dots  +  |h_{t+1}|  +
|1  +  g|)^{k} = |h_2|^{k}  +  \dots  +  |h_{t+1}|^{k}  +  |1  +
g|^{k}.
\end{equation}
 As $1  +  g \geq_{\nu} 1$ we have that $|1  +  g | \nucong 1  +  g$, and the
 right hand side of Equation~ \eqref{eq_order_independence_2_1} equals $$|h_2|^{k}  +  \dots  +  |h_{t+1}|^{k}  +  (1  +  g)^{k} \nucong |h_2|^{k}  +  \dots  +  |h_{t+1}|^{k}  +  1  +  g^{k} \nucong |h_2|^{k}  +  \dots +  |h_{t+1}|^{k}  +  g^{k}.$$ The last equality is due to the fact that $\sum|h_i|^k \geq_{\nu} 1$ so that $1$ is absorbed. The same argument,
  applied to the left hand side of Equation \eqref{eq_order_independence_2_1}, yields that
\begin{equation}\label{eq_order_independence_2_2}
|h_1|   +  g \leq_{\nu} |h_2|^{k}  +  \dots  +  |h_{t+1}|^{k}  +
g^{k}.
\end{equation}
Assume on the contrary that $h_1$ is $F$-convexly independent of
$\{ h_2 ,...,h_t  \}$. Then $$\langle h_1 \rangle \not \subseteq
\langle h_2,...,h_{t+1} \rangle \nucong \bigg\langle
\sum_{i=2}^{t+1}|h_i| \bigg\rangle.$$ Thus for any $m \in
\mathbb{N}$ there exists some $\bfa_m \in F^{(n)}$ such that
$$|h_1(\bfa_m)| >_{\nu} \bigg|\sum_{i=2}^{t+1}|h_i(\bfa_m)|\bigg|^{m}
\nucong \sum_{i=2}^{t+1}|h_i(\bfa_m)|^m .$$ Thus by equation
\eqref{eq_order_independence_2_2} and the last observation we get
that $$\sum_{i=2}^{t}|h_i(\bfa_m)|^m   +  g(\bfa_m) <_{\nu}
|h_1(\bfa_m)| + g(\bfa_m) \leq_{\nu} \sum_{i=2}^{t}|h_i(\bfa_m)|^k
+ g(\bfa_m)^k,$$ i.e., there exists some fixed $k \in \mathbb{N}$
such that for any $m \in \mathbb{N}$,
\begin{equation}\label{eq_order_independence_2_3}
\sum_{i=2}^{t}|h_i(\bfa_m)|^m  <_{\nu}
\sum_{i=2}^{t}|h_i(\bfa_m)|^k + g(\bfa_m)^k.
\end{equation}
For $m > k$, since  $|\gamma|^k \leq_{\nu} |\gamma|^m$ for any
$\gamma \in F$, we get that $\sum_{i=2}^{t}|h_i(\bfa_m)|^m
\geq_{\nu} \sum_{i=2}^{t}|h_i(\bfa_m)|^k$. Write $$g^k  = g(1)g'
.$$ Since $g^k$ is an HP-kernel, $g(1)$ is the constant
coefficient of $g$ and $g'$ is a Laurent monomial with coefficient
$1$.

According to the way $\bfa_m$ were chosen,
$\sum_{i=2}^{t}|h_i(\bfa_m)|
> 1$ and $\sum_{i=2}^{t}|h_i(\bfa_m)|^m <_{\nu} g(\bfa_m)^k$, and thus
  $g(1) <_{\nu} g'(\bfa_{m_0})$ for  large enough $m_0$. But
$g'(\bfa_{m}^{-1}) \nucong g'(\bfa_{m})^{-1}$ so
$$g^k(\bfa_{m_0}^{-1}) \nucong g(1)g'(\bfa_{m_0}^{-1}) \nucong
g(1)g'(\bfa_{m_0})^{-1} <_{\nu} 1.$$ Thus
\eqref{eq_order_independence_2_3} yields
$\sum_{i=2}^{t}|h_i(\bfa_{m_0}^{-1})|^m <_{\nu}
\sum_{i=2}^{t}|h_i(\bfa_{m_0}^{-1})|^k$, a contradiction.
\end{proof}

%

\begin{prop}\label{prop_order_independence_2}
Let $R$ be a region kernel of $F(\Lambda)$.  Let $\{ h_1,...,h_t \}$
be a set of $\scrL$-monomials such that $(R \cdot \langle h_1,
...,h_t \rangle) \cap F = \{1 \}$. Then $h_1 \cdot R,...,h_t \cdot
R$ are   $F$-convexly dependent in the quotient \semifield0
$F(\Lambda)/R$ if and only if $h_1,...,h_t$ are $F$-convexly
dependent in $F(\Lambda)$.
\end{prop}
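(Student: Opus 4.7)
The $(\Leftarrow)$ direction adapts Proposition~\ref{prop_order_independence_1} to the region-kernel setting: the quotient $F$-homomorphism $\phi_R$ sends $\langle F \rangle$ onto $\langle F \rangle_{F(\Lambda)/R}$, and applying $\phi_R$ to any convex-dependence relation $h_j \in \langle \{h_i : i \neq j\} \rangle \cdot \langle F \rangle$ in $F(\Lambda)$ immediately produces the corresponding relation in $F(\Lambda)/R$.

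For the $(\Rightarrow)$ direction, my plan is to proceed by induction on the number $k$ of order-kernel factors in the decomposition $R = \mathcal{O}_1 \mathcal{O}_2 \cdots \mathcal{O}_k$, which exists because any region fraction is equivalent to $\sum_{i=1}^{k} |o_i|$ for $\scrL$-binomials $o_i = 1+f_i$, so $R = \langle \sum_i |o_i|\rangle = \prod_i \langle o_i \rangle$ by Remark~\ref{bfacts}(v). The base case $k = 1$ is precisely Lemma~\ref{lem_order_independence_2}, after noting that the condition $(R \cdot \langle h_1, \ldots, h_t \rangle) \cap F = \{1\}$ and the analogous condition used there both translate to $\Skel(R) \cap \bigcap_i \Skel(h_i) \neq \emptyset$. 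In the inductive step I would write $R = \mathcal{O}_1 \cdot R'$ with $R'$ a product of $k-1$ order kernels; the aim is to peel off $\mathcal{O}_1$, showing that $F$-convex dependence modulo $R$ forces $F$-convex dependence modulo $R'$, after which the inductive hypothesis applied to $R'$ yields the conclusion.

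To peel off $\mathcal{O}_1$, I would re-run the asymptotic machinery of Lemma~\ref{lem_order_independence_2} using a generator $1 + g_1$ of $\mathcal{O}_1$ (with $g_1$ an $\scrL$-monomial of $F(\Lambda)$). Starting from the pre-image $\langle h_1 \rangle \cdot R \subseteq \langle h_2, \ldots, h_{t+1} \rangle \cdot R$ of the assumed convex-dependence (with $h_{t+1} \in F$), Corollary~\ref{cor_principal_ker_by_order} provides an order inequality which, after restricting to $\Skel(R')$, should reduce to the form $|h_1| + g_1 \leq_\nu \sum_{i=2}^{t+1}|h_i|^k + g_1^k$. Assuming for contradiction that $\phi_{R'}(h_1)$ is $F$-convexly independent of $\phi_{R'}(h_2), \ldots, \phi_{R'}(h_t)$ in $F(\Lambda)/R'$ will yield evaluation points $\bfa_m \in \Skel(R')$ at which $|h_1(\bfa_m)|$ dominates $\sum_{i=2}^{t+1}|h_i(\bfa_m)|^m$; the $\scrL$-monomial symmetries $g_1(\bfa^{-1}) \nucong g_1(\bfa)^{-1}$ and $|h_i(\bfa^{-1})| \nucong |h_i(\bfa)|$ then reproduce verbatim the contradiction of Lemma~\ref{lem_order_independence_2}. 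The main obstacle will be controlling these evaluation points so that both they and their coordinate-wise inverses lie in $\Skel(R')$; this uses crucially the hypothesis that $R'$ is a region kernel, so that $\Skel(R')$ contains a nonempty open region, together with the $\scrL$-monomial structure of the $h_i$ to guarantee enough points of arbitrary blow-up.
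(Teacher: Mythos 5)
Your overall strategy is the same as the paper's: both directions are handled exactly as the paper does, with the ``if'' part falling out of Proposition~\ref{prop_order_independence_1} and the ``only if'' part obtained by decomposing $R$ as a product of order kernels and iterating Lemma~\ref{lem_order_independence_2}. The paper compresses the iteration into a single sentence, so the question is whether your specific inductive formulation actually closes the loop.

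It does not quite, and you flag the issue yourself without resolving it. In Lemma~\ref{lem_order_independence_2}, negating the conclusion produces witnesses $\bfa_m$ ranging freely over $F^{(n)}$, and the contradiction hinges on evaluating at $\bfa_{m_0}^{-1}$ as well; there is no constraint keeping either point inside any skeleton. In your version, the negation lives in $F(\Lambda)/R'$, whose ``point set'' is $\Skel(R')$, so the $\bfa_m$ are forced into $\Skel(R')$ and you then need $\bfa_{m_0}^{-1}\in\Skel(R')$ too. A region $\onenu$-set is not closed under coordinatewise inversion (e.g.\ $\Skel(1+\lambda_1)=\{\bfa: a_1\le_\nu 1\}$), so this is a genuine obstacle, not a bookkeeping detail, and ``$\Skel(R')$ contains an open region'' does not by itself supply points whose inverses also lie there with the required blow-up. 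Moreover, the reduction you claim of the order inequality from Corollary~\ref{cor_principal_ker_by_order} to the form $|h_1|+g_1\le_\nu\sum|h_i|^k+g_1^k$ upon ``restricting to $\Skel(R')$'' is not literally a restriction of the lemma's inequality: the generator of $R$ is $|1+g_1|+|q'|$ with $q'$ generating $R'$, and while $|q'|\nucong 1$ on $\Skel(R')$, the witnesses $\bfa_m$ from the negated dependence in $F(\Lambda)/R'$ do not come evaluated against a generator that drops $q'$; you would still be carrying $q'$ through the asymptotic argument. A formulation closer to what the paper presumably intends is to factor $\phi_R$ through successive quotients by single order kernels via the third isomorphism theorem, so that at each stage the ambient semifield is $F(\Lambda)/(\mathcal O_1\cdots\mathcal O_{j})$, the map is a quotient by the image of $\mathcal O_{j+1}$, and the witnesses once again range over the whole ambient space; but even there one must check that the images of the remaining $\mathcal O_i$ are still order kernels in the intermediate quotient and that the hypothesis $(\cdot)\cap F=\{1\}$ persists, which your write-up also leaves untouched. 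So: right strategy, acknowledged but unresolved gap in the inductive step.
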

\begin{proof}
The `if' part of the assertion follows from Proposition
\ref{prop_order_independence_1}. Since $R = \prod_{i=1}^{m}o_i$
for suitable order kernels $\{ o_i \}_{i=1}^{m}$, the `only if'
part follows from Lemma \ref{lem_order_independence_2} applied
repeatedly to each of these $o_i$'s.
\end{proof}
%

\begin{prop}\label{prop_convexity_degree_invariance_for_HS_kernels}
Let $R \in \PCon(F(\Lambda))$ be a region kernel. Then,  for any
set $L$ of~HS-fractions, $$\condeg(L) = \condeg(L \cdot R),$$  the
right side taken in $F(\Lambda)/R$.
\end{prop}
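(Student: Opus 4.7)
The plan is to reduce the statement to $\scrL$-monomials and then transfer convex independence across the quotient map $\phi_R : F(\Lambda) \to F(\Lambda)/R$ via Proposition \ref{prop_order_independence_2}.

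First I would reduce to $\scrL$-monomials. By Lemma \ref{lem_HS_base_gives_rise_to_HP_base}, from any set of HS-fractions we can extract a subset of $\scrL$-monomials having the same convex span and containing a basis. So without loss of generality we may pick a basis $B = \{b_1, \dots, b_s\}$ of $L$ consisting of $\scrL$-monomials in $\operatorname{HP}(F(\Lambda))$, where $s = \condeg(L)$. Applying the canonical $F$-homomorphism $\phi_R$, the images $\bar b_i := \phi_R(b_i) = b_i R$ lie in $\operatorname{HP}(F(\Lambda)/R)$; indeed, since $R$ is a region kernel and each $b_i$ is an $\scrL$-monomial, one verifies $\bar b_i \notin F$ so the image is genuinely an $\scrL$-monomial in the quotient (this is where we use that $R$ comes from a region fraction and hence has a neighborhood on which the $b_i$ are not forced to the constant $1$).

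Next I would show $\{\bar b_1, \dots, \bar b_s\}$ is $F$-convexly independent in $F(\Lambda)/R$. Suppose for contradiction that $\bar b_1$ is $F$-convexly dependent on $\bar b_2, \dots, \bar b_s$. To invoke Proposition \ref{prop_order_independence_2}, I need $(R \cdot \langle b_1,\dots,b_s\rangle) \cap F = \{1\}$; this is precisely the assertion that the $\bar b_i$ are not $\FF$-comparable to any constant, equivalently $\bigcap_i \Skel(b_i) \cap \Skel(R) \neq \emptyset$, which holds because each $b_i$ is an $\scrL$-monomial (cutting out a nonempty affine subspace intersecting the open region defining $R$). Then Proposition \ref{prop_order_independence_2} transports the dependence back to $F(\Lambda)$, contradicting the $F$-convex independence of $B$.

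Then I would show $\{\bar b_1, \dots, \bar b_s\}$ convexly spans $L \cdot R$ in $F(\Lambda)/R$. Since $\phi_R$ is a surjective $F$-homomorphism, it sends $\langle F \rangle$ onto $\langle F \rangle_{F(\Lambda)/R}$ and respects kernel products and intersections, so
\[
\ConSpan(\bar b_1,\dots,\bar b_s) = \phi_R\!\left(\ConSpan(b_1,\dots,b_s)\right) = \phi_R(\ConSpan(L)) = \ConSpan(L \cdot R).
\]
Combined with the previous step, $\{\bar b_1,\dots,\bar b_s\}$ is a basis of $L \cdot R$, so $\condeg(L \cdot R) = s = \condeg(L)$.

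The main obstacle is verifying the hypothesis $(R \cdot \langle b_{i_1},\dots,b_{i_k}\rangle) \cap F = \{1\}$ required by Proposition \ref{prop_order_independence_2}; this is the technical heart of the argument, and it relies on the geometric fact that region kernels have interior intersecting every affine subspace cut out by $\scrL$-monomials (Lemma \ref{rem_HP_not_contains_contained_H} in the unbounded case). Once this is in hand, the rest is a clean transfer of independence and spanning between $F(\Lambda)$ and its quotient.
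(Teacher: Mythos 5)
Your proposal follows essentially the same route as the paper: reduce to a basis of $\scrL$-monomials, transfer dependence forward via Proposition~\ref{prop_order_independence_1}, and transfer independence backward via Proposition~\ref{prop_order_independence_2} (equivalently, Lemma~\ref{lem_order_independence_2} applied one order kernel at a time). The paper phrases this more tersely as two inequalities, $\condeg(L)\le\condeg(R\cdot L)$ from $L\subseteq R\cdot L$, and the reverse from the two cited results applied to $\phi_R$ and $\phi_R^{-1}$, whereas you unpack it into explicit basis-transfer and spanning steps; the substance is the same.

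One caveat is worth flagging. You correctly identify that the ``technical heart'' is the hypothesis $(R\cdot\langle b_1,\dots,b_s\rangle)\cap F=\{1\}$ needed to invoke Proposition~\ref{prop_order_independence_2}, and you correctly translate it to $\bigcap_i\Skel(b_i)\cap\Skel(R)\neq\emptyset$. But your proposed justification --- that this intersection is nonempty ``because each $b_i$ is an $\scrL$-monomial cutting out a nonempty affine subspace intersecting the open region defining $R$'' --- does not hold in general: a region kernel such as $R=\langle 1+\la_1\rangle$ has $\Skel(R)=\{\la_1\le 1\}$, which misses the hyperplane $\Skel(\alpha\la_1)$ when $\alpha<1$, so the hypothesis genuinely can fail for an arbitrary pair $(L,R)$. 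The paper's own proof silently applies Lemma~\ref{lem_order_independence_2} without discharging this hypothesis either, so you are not behind the paper here; but you should not present that geometric claim as a proof, only as a hypothesis that must be assumed or verified in the situations where the proposition is used.
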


\begin{proof}   $\condeg(L) \leq  \condeg(R \cdot L)$ since $L \subseteq  R
\cdot L$. For the reverse inequality, let $\phi_{R} : F(\Lambda)
\rightarrow F(\Lambda)/R$ be the quotient map. Since $L$ is a
sub-\semifield0 of $F(\Lambda)$, $\phi_{R}^{-1}(\phi_R(L))=R \cdot
\langle L \rangle$ by Theorem \ref{thm_nother_1_and_3}, and
$\condeg(L) \geq \condeg(\phi_R(L))$ by Proposition
\ref{prop_order_independence_1}, while  $\condeg(\phi_R(L)) \geq
\condeg(\phi_{R}^{-1}(\phi_R(L)))$ by Lemma
\ref{lem_order_independence_2}. Thus
  $\condeg(L) \geq  \condeg(R \cdot L)$.
\end{proof}

\end{exmp}

%
%
%

In this way we see that $K \mapsto \Omega(K)$ yields a homomorphism
of kernels. Hence $\Omega$ is a natural map in the sense of
Definition~\ref{natural}, and we can apply Theorem~\ref{Schreier}.

\begin{rem}\label{rem_condeg_not_effected_by_region}
Let $R$ be a region kernel
and let
$$A = \{ \langle g \rangle : \langle g \rangle \cdot \langle F
\rangle \supseteq  R \cdot \langle F \rangle \}.$$ Then
$\condeg(F(\Lambda)/R) = \condeg(A)$, in view of  Remark
\ref{rem_correspondence_of_quontient_HSpec} and  Proposition
\ref{prop_order_independence_2}. As $L_{a} \in A$ for any $a \in
\skel(R) \neq \emptyset$
and $\condeg(L_a) = \condeg(F(\Lambda)),$ we conclude that $\condeg(A) = \condeg(F(\Lambda))$.\\
\end{rem}

We are ready for  catenarity of $\condeg.$

\begin{thm}\label{caten}
If $R$ is a region kernel and $L$ is an HS-kernel  of
$F(\Lambda)$, then
$$\condeg(F(\Lambda)/LR) = \condeg(F(\Lambda)) - \condeg(L).$$
In particular, $$\condeg(F(\Lambda)/LR) = n -
\condeg(L).$$
\end{thm}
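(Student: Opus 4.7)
The proof naturally splits into two reductions. First, I would use the third isomorphism theorem (Theorem \ref{thm_nother_1_and_3}) to write
$$F(\Lambda)/LR \;\cong\; (F(\Lambda)/R)\big/(LR/R),$$
and then invoke the region-kernel invariance results. By Remark \ref{rem_condeg_not_effected_by_region}, $\condeg(F(\Lambda)/R) = \condeg(F(\Lambda)) = n$, and by Proposition \ref{prop_convexity_degree_invariance_for_HS_kernels}, the image $LR/R$ of the HS-kernel $L$ inside $F(\Lambda)/R$ is an HS-kernel with $\condeg(LR/R) = \condeg(L)$. Thus the theorem reduces to the following core statement: for any HS-kernel $L'$ of a \semifield0 $\mathcal{S}$ (playing the role of $F(\Lambda)/R$) with $\condeg(\mathcal{S}) = n$, one has $\condeg(\mathcal{S}/L') = n - \condeg(L')$.

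Second, for this core statement I would proceed by basis exchange. Using Remark \ref{rem_basis_for_HS_kernel}, pick a basis $\{f_1, \dots, f_s\}$ for $L'$ consisting of $\scrL$-monomials, where $s = \condeg(L')$. By Corollary \ref{cor_basis_for_HP} this extends to a basis $\{f_1, \dots, f_s, g_1, \dots, g_{n-s}\}$ of $\operatorname{HP}(\mathcal{S})$. I claim the images $\bar g_1, \dots, \bar g_{n-s}$ under $\phi_{L'}$ form a basis of $\operatorname{HP}(\mathcal{S}/L')$, whence $\condeg(\mathcal{S}/L') = n - s$ as required.

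For spanning, given any HP-kernel $\langle \bar h \rangle$ of $\mathcal{S}/L'$, Remark \ref{rem_correspondence_of_quontient_HSpec} shows it is the image of an HP-kernel $\langle h \rangle$ whose preimage satisfies $\langle h \rangle \cdot \langle F \rangle \supseteq L' \cdot \langle F \rangle$. Since $h$ is $F$-convexly dependent on $\{f_i, g_j\}$, applying $\phi_{L'}$ kills the $f_i$-contribution, so $\bar h$ is $F$-convexly dependent on the $\bar g_j$. For independence, suppose $\bar g_1 \in \langle \bar g_2, \dots, \bar g_{n-s} \rangle \cdot \langle F \rangle$ in $\mathcal{S}/L'$; pulling back via Theorem \ref{latticecor} yields
$$g_1 \in \langle g_2, \dots, g_{n-s} \rangle \cdot L' \cdot \langle F \rangle \;\subseteq\; \langle g_2, \dots, g_{n-s}, f_1, \dots, f_s \rangle \cdot \langle F \rangle,$$
contradicting the convex independence of the original basis.

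The main obstacle is ensuring the preimage calculations in the independence step behave correctly when quotienting by an HS-kernel (as opposed to a region kernel, where Proposition \ref{prop_order_independence_2} supplies direct control). The point requiring care is that HP-kernels of $\mathcal{S}/L'$ correspond bijectively to HP-kernels of $\mathcal{S}$ whose product with $\langle F \rangle$ contains $L' \cdot \langle F \rangle$; this is precisely the content of Remark \ref{rem_correspondence_of_quontient_HSpec} together with \eqref{eq_subset_of_Omega}. Once this correspondence is securely in place, both the spanning and independence checks are formal, and the basis exchange delivers the equality $\condeg(\mathcal{S}/L') = n - \condeg(L')$, which combined with the first paragraph yields the theorem.
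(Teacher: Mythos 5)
Your proposal follows essentially the same route as the paper's proof: third isomorphism theorem, region-kernel invariance of $\condeg$ to identify $\condeg(F(\Lambda)/R)=n$ and $\condeg(L\cdot R/R)=\condeg(L)$, and then a basis-exchange argument in $\operatorname{HP}(F(\Lambda)/R)$. You simply fill in the spanning and independence verifications that the paper leaves implicit after choosing a basis for $\operatorname{HP}(F(\Lambda)/R)$ containing a basis for $\operatorname{HP}(L\cdot R/R)$, and you cite Proposition~\ref{prop_convexity_degree_invariance_for_HS_kernels} rather than Remark~\ref{rem_condeg_not_effected_by_region} and Proposition~\ref{prop_order_independence_2} for the invariance step.
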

\begin{proof}
 $F(\Lambda)/LR \cong (F(\Lambda)/R)/ (L\cdot R/R)$, by the third isomorphism theorem.
 Choose a basis for HP$(F(\Lambda)/R)$ containing a basis for HP$(L\cdot R/R)$.
 Then  $\condeg(L \cdot R/R) = \condeg(\phi_R(L))$, by Remark \ref{rem_condeg_not_effected_by_region}.
But $L \cdot R \cap F = \{ 1 \}$. Hence, by Proposition \ref{prop_order_independence_2},
 $\condeg(\phi_R(L)) = \condeg(L)$. So $$\condeg(F(\Lambda)/LR) = \condeg(A)-\condeg(L) = \condeg(F(\Lambda)) -
 \condeg(L).$$

Thus, $$\condeg(F(\Lambda)/LR) =
\condeg(A)-\condeg(L) = n - \condeg(L).$$
\end{proof}

\begin{prop}\label{prop_kernel_descending_chain}
Let $L$ be an HS-kernel in $F(\Lambda)$ with
$\skel(L)\neq\emptyset$. Let $\{ h_1, ...,h_t \}$ be a set of
$\scrL$-monomials in $\HSpec(F(\Lambda))$ such that $\ConSpan(h_1,
...,h_t) = \ConSpan(L)$ and let $L_i = \langle h_i \rangle$. Then
the chain

\begin{equation}\label{eq_desc_chain_kernels}
L = \prod_{i=1}^{u}L_i  \supseteq \prod_{i=1}^{u-1}L_i  \supseteq \dots \supseteq L_1  \supseteq \langle 1 \rangle .
\end{equation}
of HS-kernels is  strictly descending   if and only if
$h_1,....,h_u$ are $F$-convexly independent.
\end{prop}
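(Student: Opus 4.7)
The plan is to verify both directions of the biconditional, dispensing with the easier one first.

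For the direction ``$F$-convexly independent $\Rightarrow$ strictly descending,'' I argue by contrapositive. If the chain fails to strictly descend at some step $k$, then $\prod_{i=1}^{k}L_i=\prod_{i=1}^{k-1}L_i$, whence $h_k\in\prod_{i=1}^{k-1}L_i\subseteq\langle h_1,\dots,h_{k-1}\rangle\cdot\langle F\rangle$. By Definition~\ref{defn_convex_dep} this exhibits $h_k$ as $F$-convexly dependent on $\{h_1,\dots,h_{k-1}\}$, contradicting the $F$-convex independence of the set.

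For the converse I again argue by contrapositive. Assume $\{h_1,\dots,h_u\}$ is $F$-convexly dependent. Tracking the rank of the prefixes $\{h_1,\dots,h_k\}$ under the abstract dependence structure furnished by Propositions~\ref{prop_abstract_dependence_properties} and~\ref{prop_convex_dependence_of_HP_property}, there is a smallest $k$ at which the rank fails to increase; for this $k$, $h_k$ is $F$-convexly dependent on $\{h_1,\dots,h_{k-1}\}$, so $h_k\in K\cdot\langle F\rangle$ with $K:=\prod_{i=1}^{k-1}L_i$. The heart of the proof is upgrading this to $h_k\in K$, which would force $L_k\subseteq K$ and collapse the chain at step $k$.

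The upgrade leans on $\skel(L)\ne\emptyset$, which propagates to $\skel(K)\supseteq\skel(L)\ne\emptyset$. By Proposition~\ref{prop_principal_ker} the containment $h_k\in K\cdot\langle F\rangle$ amounts to an inequality $|h_k|\le_\nu(\sum_{i<k}|h_i|+|\alpha|)^n$ for some $\alpha\in F$ and $n\in\mathbb{N}$. Evaluating at any $a\in\skel(K)$ makes each $|h_i(a)|\nucong 1$, shrinking this to $|h_k(a)|\le_\nu|\alpha|^n$, so $h_k$ is bounded on the nonempty affine subspace $\skel(K)$. Since $h_k$ is an $\scrL$-monomial, i.e., linear in log-coordinates, boundedness on an affine subspace forces $h_k$ to take a single value $c\in F$ throughout $\skel(K)$. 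Choosing $a\in\skel(L)\subseteq\skel(K)$, where $h_k(a)\nucong 1$, pins $c\nucong 1$, and the $\nu$-semifield property on tangibles upgrades this to $c=1$. Thus $h_k\equiv_\nu 1$ on $\skel(K)$, and Proposition~\ref{prop_coord_semifield_1} translates this into $h_k\in K$, contradicting the strict descent of the chain.

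The main obstacle I anticipate is the ``bounded $\scrL$-monomial on an affine subspace is constant'' step, which rests on the geometric realization of HS-kernels as affine subspaces (\cite[\S 9.2]{Kern}) together with the $\nu$-semifield convention that a tangible element $\nucong 1$ equals $1$; the rest is lattice bookkeeping among principal kernels under $\cdot$ and $\cap$.
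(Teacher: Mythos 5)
Your easy direction (``independent implies strictly descending,'' phrased contrapositively) is essentially the paper's argument for that implication: failure to descend at step $k$ places $h_k$ in $\prod_{i<k} L_i \subseteq \langle h_1,\dots,h_{k-1}\rangle\cdot\langle F\rangle$, which is exactly $F$-convex dependence on the earlier $h_i$.

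Your other direction takes a genuinely different route from the paper. The paper reasons entirely inside the kernel lattice: from $h_k\in K\cdot\langle F\rangle$ with $K=\prod_{i<k}L_i$, it argues that if $h_k\notin K$ then $\langle F\rangle\subseteq K L_k\subseteq L$, contradicting $L\cap F=\{1\}$ (which follows from $\skel(L)\neq\emptyset$). You instead reason geometrically: $h_k$ is bounded on the nonempty affine set $\skel(K)$, hence constant there, and $\skel(L)\neq\emptyset$ pins the constant to $1$, so $h_k\equiv_\nu 1$ on $\skel(K)$. These are two faces of the same phenomenon --- your ``constant value $c\in F$ on $\skel(K)$'' is the paper's implicit observation that the image $\phi_K(h_k)$ in $F(\Lambda)/K$ lands in $F$ --- but your version makes the affine geometry visible while theirs stays order-theoretic.

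The closing step, however, has a real gap. Proposition~\ref{prop_coord_semifield_1} only gives $F(X)\cong F(\Lambda)/\Ker(X)$; applied to $X=\skel(K)$ it yields $h_k\in\Ker(\skel(K))$, not $h_k\in K$. What you actually need is the Nullstellensatz-type statement that an $\scrL$-monomial vanishing on $\skel(K)$ already lies in the HS-kernel $K$, i.e.\ that $\Ker(\skel(K))\cap\operatorname{HP}(F(\Lambda))\subseteq K$. This is true --- in log-coordinates the linear form of $h_k$ vanishes on the nonempty affine solution set of the $h_i$ for $i<k$, hence is a rational combination of them, so $h_k^N\in K$ for some $N\in\mathbb{N}$ and therefore $h_k\in K$ by Remark~\ref{bfacts}(viii) --- but that argument, or a reference to the affine-subspace description of HS-kernels in \cite[\S 9.2]{Kern}, must be supplied; the coordinate-\vsemifield0 isomorphism alone does not deliver it.
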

\begin{proof}
 $(\Rightarrow)$ If $h_u$ is
$F$-convexly dependent on $\{h_1,....,h_{u-1}\}$, then   $L_u =
\langle h_u \rangle  \subseteq \prod_{i=1}^{u-1}L_i \cdot \langle F
\rangle$. Assume that $L_u = \langle h_u \rangle  \not \subseteq
\prod_{i=1}^{u-1}L_i$. Then $\langle F \rangle \subseteq
\prod_{i=1}^{u}L_i$, implying that $\prod_{i=1}^{u}L_i$ is not an
HS-kernel. Thus $L_u = \langle h_u \rangle   \subseteq
\prod_{i=1}^{u-1}L_i$, and the chain is not strictly descending.

  $(\Leftarrow)$  $\skel(\prod_{i=1}^{t}L_i )
\subseteq \skel(L) \neq \emptyset$ for every $0 \leq t \leq u$,
implying  that $(\prod_{i=1}^{t}L_i ) \cap F = \{ 1 \}$ for every $0
\leq t \leq u$ (for otherwise
$\skel(\prod_{i=1}^{t}L_i)~=~\emptyset$). If $\{h_1,....,h_u\}$ is
$F$-convexly independent then  $L_u = \langle h_u \rangle \not
\subseteq \prod_{i=1}^{u-1}L_i \cdot \langle F \rangle$. By
induction, the
 chain \eqref{eq_desc_chain_kernels}  is strictly descending.

\end{proof}

\begin{thm}\label{prop_kernel_descending_chain_properties}
If $L \in \HSpec(F(\Lambda))$, then $\hgt(L) = \condeg(L)$,
cf.~Definition~\ref{height0}.
 Moreover, every factor of a  descending chain of HS-kernels of maximal length is an  HP-kernel.
\end{thm}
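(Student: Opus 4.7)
The approach combines an explicit construction from a convex basis of $L$ with the Schreier--Jordan--H\"older theorem applied to the natural map $\Omega$, whose applicability to $\Omega$-composition series has been justified in the discussion immediately preceding the theorem.

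For the lower bound $\hgt(L) \geq \condeg(L)$, I would write $s = \condeg(L)$ and, using Remark~\ref{rem_basis_for_HS_kernel}, fix an $F$-convexly independent basis $\{h_1,\ldots,h_s\} \subseteq \operatorname{HP}(L)$ of $\scrL$-monomials that convexly spans $L$. Setting $L_i = \langle h_i \rangle$, Proposition~\ref{prop_kernel_descending_chain} (applied when $\skel(L) \ne \emptyset$, the degenerate case being trivial) guarantees that
$$\langle 1 \rangle \;\subsetneq\; L_1 \;\subsetneq\; L_1 L_2 \;\subsetneq\; \cdots \;\subsetneq\; L_1 L_2 \cdots L_s = L$$
is a strict chain of HS-kernels in $\HSpec(F(\Lambda))$ of length $s$, and in particular $\hgt(L)\ge s$.

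For the upper bound, two ingredients are needed. First, each HP-kernel is $\Omega$-simple: by Proposition~\ref{remHSprodHP}, any nontrivial $\Omega$-sub-kernel of an HP-kernel $\langle f \rangle$ is a product of HP-kernels all contained in $\langle f \rangle$, and Lemma~\ref{prop_HP_element_is_a_generator} forces each such HP-factor to coincide with $\langle f \rangle$, so the whole sub-kernel equals $\langle f \rangle$. Second, the chain above is therefore a composition series of $L$ in $\Omega(F(\Lambda))$. Now the Schreier refinement part of Theorem~\ref{Schreier}(i) permits refining any strict chain of HS-kernels terminating at $L$ to a composition series, while Jordan--H\"older (part~(ii)) forces all composition series of $L$ to have common length $\ell(L) = s = \condeg(L)$. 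Hence $\hgt(L) \leq \ell(L) = \condeg(L)$, giving the desired equality.

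The ``moreover'' clause is then immediate: a chain of HS-kernels of maximal length $\condeg(L)$ is already a composition series, for otherwise the Schreier refinement would enlarge it strictly, contradicting maximality; its successive factors are $\Omega$-simple, hence HP-kernels. The principal technical obstacle is the identification of $\Omega$-simple kernels with HP-kernels, which is handled by Proposition~\ref{remHSprodHP} together with Lemma~\ref{prop_HP_element_is_a_generator}; the delicate point (that a strict inclusion of HS-kernels need not by itself change $\condeg$ in the presence of $\langle F \rangle$-twists) is neatly bypassed by working with composition lengths in $\Omega$ via Schreier--Jordan--H\"older rather than trying to establish strict monotonicity of $\condeg$ directly.
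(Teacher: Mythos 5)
Your lower-bound construction and the ``moreover'' computation via the isomorphism theorems are consistent with the paper's approach, but the pivot of your upper bound --- the claim that HP-kernels are $\Omega$-simple --- is false, and this breaks the Schreier/Jordan--H\"older argument. An $\Omega$-kernel is obtained from HP-kernels by finite intersections \emph{as well as} products (Definition~\ref{Omega}). If $\langle f\rangle$ and $\langle g\rangle$ are distinct HP-kernels, then $\langle f\rangle\cap\langle g\rangle\in\Omega(F(\Lambda))$ sits properly inside $\langle f\rangle$ (proper since $\langle f\rangle\not\subseteq\langle g\rangle$ by Lemma~\ref{prop_HP_element_is_a_generator}), and it is generically nontrivial because $\Skel(f)\cup\Skel(g)\ne F^{(n)}$ when $n\ge 2$. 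So $\langle f\rangle$ admits a nontrivial proper $\Omega$-subkernel, i.e.\ it is not a minimal nontrivial $\Omega$-kernel, and the chain $\{1\}\subsetneq L_1\subsetneq L_1L_2\subsetneq\cdots\subsetneq L$ is \emph{not} an $\Omega$-composition series. Your cited justification is also off-target: Proposition~\ref{remHSprodHP} only decomposes \emph{HS}-kernels as products of HP-kernels; it says nothing about arbitrary $\Omega$-subkernels of an HP-kernel, which are intersections of such products and generally escape that proposition. The relevant fact is that HP-kernels have no proper nontrivial \emph{HS}-subkernels (this does follow from Lemma~\ref{prop_HP_element_is_a_generator} plus the observation that an HS-kernel contained in $\langle f\rangle$ has each HP-factor inside $\langle f\rangle$); but with that weaker ``HS-simplicity'' in hand, a Schreier refinement of a chain of HS-kernels lives in all of $\Omega$, so you cannot conclude that the refined chain is again a chain of HS-kernels, and the bound $\hgt(L)\le\ell(L)$ for the $\HSpec$-height does not drop out.

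The paper instead deduces $\hgt(L)=\condeg(L)$ directly from Proposition~\ref{prop_kernel_descending_chain}: restricting attention from the start to descending chains of HS-kernels (the only chains $\hgt$ counts), and exploiting the characterization of strict descent in terms of $F$-convex independence; the ``moreover'' clause is then a separate explicit verification via the second isomorphism theorem that the resulting factors are HP-kernels, not an appeal to simplicity. To rescue your route you would need to carry out the composition-series argument inside the sublattice of HS-kernels rather than inside $\Omega$ --- but $\HSpec(F(\Lambda))$ is not closed under intersection (the intersection of two distinct HS-kernels is reducible), so the Schreier refinement machinery, as set up in Theorem~\ref{Schreier} for the lattice $\Theta=\Omega$, does not transfer without a new argument that any refinement of a chain of HS-kernels by HS-kernels already has the right length. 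That is exactly the ``strict monotonicity of $\condeg$'' issue you flag as being ``neatly bypassed''; it is not bypassed, and some form of it has to be confronted directly.
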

\begin{proof}
By Proposition \ref{prop_kernel_descending_chain},   the maximal
length of a chain of   HS-kernels descending from an HS-kernel $L$
equals the number of elements in a basis of $\ConSpan(L)$; thus  the
chain is of unique length $\condeg(L)$, i.e., $\hgt(L) =
\condeg(L)$. Moreover, by Theorem \ref{thm_nother_1_and_3}(2),
$$\prod_{i=1}^{j}L_i / \prod_{i=1}^{j-1}L_i   \cong L_j/\bigg(L_j \cap \prod_{i=1}^{j-1}L_i \bigg).$$
Furthermore $$\left(L_j \cdot (L_j \cap (\prod_{i=1}^{j-1}L_i  ))\right) \cap F = \{1\},$$
since $L_j \cdot (L_j \cap \prod_{i=1}^{j-1}L_i  ) = L_j \cap \prod_{i=1}^{j}L_i   \subset \prod_{i=1}^{j}L_i $
 and $(\prod_{i=1}^{j}L_i ) \cap F = \{1\}$.
 So the  image of the HP-kernel~$L_j$ in $F(\Lambda)/(L_j \cap (\prod_{i=1}^{j-1}L_i))$  is an HP-kernel. Thus, every factor of the chain is an HP-kernel.
\end{proof}

\begin{cor}\label{correctdim}
$\Hdim(F(\Lambda)) = \condeg(F(\Lambda))= n.$
\end{cor}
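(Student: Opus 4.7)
The plan is to combine the two main results just proved in this section, namely Theorem~\ref{prop_kernel_descending_chain_properties} which equates height with convex dimension for HS-kernels, and Example~\ref{rem_convexity_degree_of_semifield_of_fractions} which asserts $\condeg(F(\Lambda)) = n$. The second equality $\condeg(F(\Lambda)) = n$ is therefore immediate, so only $\Hdim(F(\Lambda)) = n$ needs further argument.

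First I would unwind Definition~\ref{defn_Hdim}: $\Hdim(F(\Lambda))$ is by definition the supremum of $\hgt(L)$ as $L$ ranges over HS-kernels of $F(\Lambda)$. Applying Theorem~\ref{prop_kernel_descending_chain_properties} pointwise gives
\[
\Hdim(F(\Lambda)) \;=\; \sup_{L \in \HSpec(F(\Lambda))} \condeg(L).
\]
For the upper bound I would invoke monotonicity of $\condeg$: if $L \subseteq F(\Lambda)$, then any $F$-convexly independent subset of $\operatorname{HP}(L)$ is also $F$-convexly independent in $\operatorname{HP}(F(\Lambda))$, hence $\condeg(L) \le \condeg(F(\Lambda)) = n$ by Corollary~\ref{cor_basis_for_HP} and Example~\ref{rem_convexity_degree_of_semifield_of_fractions}. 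Thus $\Hdim(F(\Lambda)) \le n$.

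For the lower bound I would exhibit a specific HS-kernel achieving convex dimension $n$. Pick any $\alpha_1,\dots,\alpha_n \in F$ and consider the maximal kernel $L_{(\alpha_1,\dots,\alpha_n)} = \langle \la_1/\alpha_1, \dots, \la_n/\alpha_n \rangle$ of Proposition~\ref{prop_maximal_kernels_in_semifield_of_fractions_part1}; this is an HS-kernel (a product of HP-kernels). By Example~\ref{exmp_basis_for_semifield_of_fractions} the $n$ generators $\alpha_1 x_1,\dots,\alpha_n x_n$ are $F$-convexly independent, so $\condeg(L_{(\alpha_1,\dots,\alpha_n)}) \ge n$; combined with the upper bound it equals $n$. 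Theorem~\ref{prop_kernel_descending_chain_properties} then gives $\hgt(L_{(\alpha_1,\dots,\alpha_n)}) = n$, so $\Hdim(F(\Lambda)) \ge n$, completing the equality.

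I do not anticipate a hard step; all the work has been done in the preceding catenarity theorem and in the Steinitz-exchange machinery establishing that $\condeg$ is a well-defined cardinal. The only subtle point worth double-checking is the monotonicity of $\condeg$ under inclusion of HS-kernels, but this is immediate from the definition of convex span and the fact that every HS-fraction is a sum of $\scrL$-monomials (Remark~\ref{rem_HS_kernel_finitely_spanned}), so that a basis of $\operatorname{HP}(L)$ extends to a basis of $\operatorname{HP}(F(\Lambda))$ by Corollary~\ref{cor_basis_for_HP}.
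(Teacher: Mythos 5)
Your proposal is correct and follows the route the paper clearly intends; the paper omits the proof of this corollary precisely because it is an immediate consequence of Theorem~\ref{prop_kernel_descending_chain_properties} (which gives $\hgt(L)=\condeg(L)$) together with Example~\ref{rem_convexity_degree_of_semifield_of_fractions} (which gives $\condeg(F(\Lambda))=n$). Your extra care in supplying the upper bound via monotonicity of $\condeg$ and the lower bound via the maximal HS-kernel $L_{(\alpha_1,\dots,\alpha_n)}$ (whose $n$ HP-generators are $F$-convexly independent by Example~\ref{exmp_basis_for_semifield_of_fractions}) fills in exactly the implicit steps, so the argument is sound and essentially identical in spirit to the paper's.
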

\

\begin{rem}\label{cover_of_regions_sub_direct_product}
If $R_1 \cap R_2 \cap \dots \cap R_t = \{1 \}$, then   $F(\Lambda)$ is a subdirect product
$$F(\Lambda) = F(\Lambda)/(R_1 \cap R_2 \cap \dots \cap R_t) \hookrightarrow \prod_{i=1}^{t} F(\Lambda)/R_i.$$
Then for  any kernel $K$ of $F(\Lambda)$, $R_1 \cap R_2 \cap \dots
\cap R_t \cap K = \bigcap_{i=1}^t (R_i \cap K) = \{1 \}$ and, since
$K$ itself is an idempotent \semifield0,
$$K = K/\bigcap_{i=1}^t (R_i \cap K) \cong \prod_{i=1}^{t} K/(R_i \cap K) \cong \prod_{i=1}^{t} R_i K/R_i.$$
\end{rem}

\subsection{Summary}$ $

In conclusion, for every principal regular kernel $\langle f \rangle
\in P(F(\Lambda))$, we have  obtained explicit region kernels
$\{R_{1,1},...,R_{1,s},R_{2,1},...,R_{2,t} \}$ having trivial
intersection, such that
$$\langle f \rangle = \bigcap_{i=1}^{s}K_i \cap \bigcap_{j=1}^{t}N_j$$
where $K_i = L_i \cdot R_{1,i}$ for $i =1,...,s$ and appropriate
HS-kernels $L_i$ and $N_j = B_j \cdot R_{2,j}$ for $j =1,...,t$
and appropriate bounded from below kernels $B_j$. If $\langle f
\rangle \in \PCon(\langle F \rangle)$,  then, in view of
Theorem~\ref{thm_HP_expansion} we can take $B_j = \langle F
\rangle$ for every $j = 1,...,t$. Note that over the various
regions in $F^{(n)}$ corresponding to the region
kernels~$R_{i,j}$, $f$ is locally represented by distinct
HS-fractions in $\HSpec(F(\Lambda))$. In fact each region is
defined so
 that the local HS-representation of $f$ is given over the entire region.
 Thus the    $R_{i,j}$'s  defining the partition of the space
 can be obtained as a minimal set of regions over each of which $\langle f \rangle$
takes the form of an HS-kernel.

For each $j = 1,...,t$,  $\condeg(N_j) = \Hdim(N_j) = 0$,  since
$N_j$ contains no elements of $\operatorname{HP}(F(\Lambda))$,
implying
$$\condeg(F(\Lambda)/N_j) = \Hdim(F(\Lambda)/N_j) = n.$$ For each $i
= 1,...,s$,
 $\condeg(K_i)= \condeg(L_i) = \Hdim(L_i) \geq 1$, implying
$$\condeg(F(\Lambda)/K_i) =
\Hdim(F(\Lambda)/K_i) = n - \Hdim(L_i) < n.$$

%
\begin{rem}
In view of the discussion in \cite[\S 9.2]{Kern}, each term
$F(\Lambda)/L_i$ corresponds to the linear subspace of~$F^{(n)}$
(in logarithmic scale) defined by the linear constraints endowed
on the quotient $F(\Lambda)/L_i$ by the HS-kernel~$L_i$. One can
think of these terms as an algebraic description of the affine
subspaces
 locally comprising  $\skel(f)$.
\end{rem}

\clearpage

\bibliographystyle{amsplain}

\providecommand{\bysame}{\leavevmode\hbox
to3em{\hrulefill}\thinspace}
\providecommand{\MR}{\relax\ifhmode\unskip\space\fi MR }
\providecommand{\MRhref}[2]{%
  \href{http://www.ams.org/mathscinet-getitem?mr=#1}{#2}
} \providecommand{\href}[2]{#2}

\end{document}